\newtheorem{Thm}{Theorem}
\newtheorem{Def}{Definition}
\newtheorem{Prop}{Proposition}
\newtheorem{Lemma}{Lemma}
\newtheorem{Coro}{Corollary}
\newtheorem{Remark}{Remark}
\newenvironment{proof}[1][Proof]{\textbf{#1.} }{\hfill $\square$}
\newcommand{\cad}{c\`adl\`ag }
\newcommand{\what}{\widehat}
\newcommand{\wtil}{\widetilde}
\newcommand{\trace}{\mbox{Trace}}
\def \eps{\varepsilon}
\def \N{\mathbb{N}}
\def \R{\mathbb{R}}
\def \E{\mathbb{E}}
\def \F{\mathcal{F}}
\def \bF{\mathbb{F}}
\def \bM{\mathbb{M}}
\def \bL{\mathbb{L}}
\def \bH{\mathbb{H}}
\def \bD{\mathbb{D}}
\def \cE{\mathcal{E}}
\def \cP{\mathcal{P}}
\def \cU{\mathcal{U}}
\def \cT{\mathcal{T}}
\def \cD{\mathcal{D}}
\def \cM{\mathcal{M}}
\def \ds{\displaystyle}
\def \tP{\widetilde{\cP}}
\def \tOm{\widetilde{\Omega}}
\def \tpi{\widetilde{\pi}}
\def \al{\alpha}
\def \P{\mathbb{P}}
\def \1{\mathbf{1}}
\begin{document}

\title{BSDEs with monotone generator driven by Brownian and Poisson noises in a general filtration }
\author{T. Kruse \thanks{University of Duisburg-Essen, Thea-Leymann-Str. 9, 45127 Essen, Germany,
e-mail: {\tt thomas.kruse@uni-due.de}
}
, A. Popier \thanks{Universit\'e du
Maine, Laboratoire Manceau de Math\'ematiques, Avenue Olivier Messiaen, 72085 Le Mans, Cedex 9, France,
e-mail: {\tt alexandre.popier@univ-lemans.fr}
\hfill\break
We would like to thank two anonymous referees for helpful comments and suggestions. Thomas Kruse acknowledges the financial support from the French Banking Federation through the
Chaire "Markets in Transition".
}
}
\date{\today}

\maketitle

\begin{abstract}
We analyze multidimensional BSDEs in a filtration that supports a Brownian motion and a Poisson random measure. Under a monotonicity assumption on the driver, the paper extends several results from the literature. We establish existence and uniqueness of solutions in $L^p$ provided that the generator and the terminal condition satisfy appropriate integrability conditions. The analysis is first carried out under a deterministic time horizon, and then generalized to random time horizons given by a stopping time with respect to the underlying filtration. Moreover, we provide a comparison principle in dimension one.
\end{abstract}

\section*{Introduction}

The notion of nonlinear backward stochastic differential equations (BSDEs for short) was introduced by Pardoux and Peng \cite{pard:peng:90}. A solution of this equation, associated with a {\it terminal value} $\xi$ and a {\it generator or driver} $f(t,\omega,y,z)$, is a couple of stochastic processes $(Y_t,Z_t)_{t\leq T}$ such that
\begin{equation}\label{eq:standbsde}
Y_t=\xi+\int_t^Tf(s,Y_s,Z_s)ds-\int_t^TZ_sdW_s,
\end{equation}
a.s.\ for all $t\le T$, where $W$ is a Brownian motion and the processes $(Y_t,Z_t)_{t\leq T}$ are adapted to the natural filtration of $W$.

In their seminal work \cite{pard:peng:90}, Pardoux and Peng proved existence and uniqueness of a solution under suitable assumptions, mainly square integrability of $\xi$ and of the process $(f(t,\omega,0,0))_{t\leq T}$, on the one hand, and, the Lipschitz
property w.r.t. $(y,z)$ of the generator $f$, on the other hand. Since this first result, BSDEs have proved to be a powerful tool for formulating and solving a lot of mathematical problems arising for example in finance (see e.g. \cite{barr:elka:05, elka:peng:quen:97, roug:elka:00}), stochastic control and differential games (see e.g. \cite{hama:lepe:95,hama:lepe:peng:97}), or partial differential equations (see e.g. \cite{pard:99, pard:peng:92}).

\subsection*{Main results}

The aim of this paper is to establish existence and uniqueness of solutions to BSDE in a general filtration that supports a Brownian motion $W$ and an independent Poisson random measure $\pi$. We consider the following multi-dimensional BSDE:
\begin{equation} \label{eq:gene_BSDE}
Y_t = \xi + \int_t^T f(s,Y_s, Z_s,\psi_s) ds - \int_t^T\int_\cU \psi_s(u) \tpi(du,ds) -\int_t^TZ_sdW_s- \int_t^T dM_s.
\end{equation}
The solution is given by the usual triple $(Y,Z,\psi)$ and also an orthogonal local martingale $M$ which can not be reconstructed by the integrals w.r.t. to the Brownian and Poisson noise. We assume that the generator $f$ is monotonic (one-sided Lipschitz continuous) w.r.t. the $y$-variable and Lipschitz continuous w.r.t. to $z$ and $\psi$. Under the condition that the data $\xi$ and $f(t,0,0,0)$ are in $L^p$, $p > 1$, we provide existence and uniqueness results in $L^p$ spaces (the precise defintion will be given in Section \ref{sect:setting}). 

Further contributions are a comparison result in dimension one and existence and uniqueness when the terminal time is a non necessarily bounded stopping time.

\subsection*{Related literature}

There are already a lot of works which provide existence and uniqueness results under weaker assumptions than the ones of Pardoux and Peng \cite{pard:peng:90} or El Karoui et al \cite{elka:kapo:pard:97}. A huge part of the literature focuses on weakening the Lipschitz property of the coefficient $f$ w.r.t. the $y$-variable.
For example, Briand and Carmona \cite{bria:carm:00} and Pardoux \cite{pard:99} consider the case of a monotonic generator w.r.t. $y$ with different growth conditions. There have been relatively few papers which deal with the problem of existence and uniqueness of solutions in the case where the coefficients are not square integrable. El Karoui et al. \cite{elka:peng:quen:97} and Briand et al. \cite{bria:dely:hu:03} have proved existence and uniqueness of a solution for the standard BSDE \eqref{eq:standbsde} in the case where the data belong only to $L^p$ for some $p\geq 1$.

Another strand of research in the theory of BSDEs concerns the underlying filtration. In \cite{pard:peng:90} the filtration is generated by the Brownian motion $W$. Since the work of Tang and Li \cite{tang:li:94}, a lot of papers (see e.g. \cite{barl:buck:pard:97, bech:06, morl:10, pard:97, royer:06} or the books of Situ \cite{situ:05} or recently of Delong \cite{delo:13}) treat the case where the filtration is generated by the Brownian motion $W$ and a Poisson random measure $\pi$ independent of $W$. In most of these papers, the generator $f$ is supposed to be Lipschitz in $y$, even if the monotonic case is mentioned (see \cite{royer:06}) and all coefficients are square integrable. Yao \cite{yao:10} studies the $L^p$ case, $p>1$, and gives existence and uniqueness result in the case where the generator is monotone but with at most linear growth w.r.t. $y$. Li and Wei \cite{li:wei:14} give existence und uniqueness results for a fully coupled forward backward SDE under some monotonicity condition and $L^p$ coefficients, $p\geq 2$. Note that this monotonicity condition involves the coefficients of the forward diffusion and is not the same as the assumption imposed on the generator in this paper. An extension to BSDEs driven by a continuous local martingale $X$ and an integer-valued random measure $\pi$ has been studied by Xia \cite{xia:00}. Xia supposes that the filtration satisfies the representation property with respect to $X$ and $\pi$ and that the driver is Lipschitz continuous and square integrable.

For more general filtrations, the representation property of a local martingale is no more true (see Section III.4 in \cite{jaco:shir:03}) and an additional (orthogonal) martingale term has to be introduced in the definition of a solution. This approach was developed in the seminal work of El Karoui and Huang \cite{elka:huan:97} and by Carbone et al. \cite{carb:ferr:sant:07} for \cad martingales. The filtration $\bF$ is supposed to be complete, right continuous and quasi-left continuous. For a given square integrable martingale $X$ ($\langle X \rangle$ denotes the predictable projection of the quadratic variation), the BSDE \eqref{eq:standbsde} becomes
\begin{equation} \label{eq:bsde_gene_filt}
Y_t=\xi+\int_t^T f(s,Y_s,Z_s)d\langle X \rangle_s-\int_t^T Z_s dX_s - M_T + M_t.
\end{equation}
The solution is now the triple $(Y,Z,M)$ where $M$ is a square integrable martingale orthogonal to $X$. {\O}ksendal and Zhang \cite{okse:zhan:12} analyse BSDE of the form \eqref{eq:bsde_gene_filt} where $f$ does not depend on $z$, and apply to insider finance (see also Ceci et al. \cite{ceci:cret:russ:14}). Liang et al. \cite{lian:lyon:qian:11} also obtain results for a particular class of BSDE \eqref{eq:bsde_gene_filt} on an arbitrary filtered probability space. In these papers, existence and uniqueness of the solution of \eqref{eq:bsde_gene_filt} is proved for a Lipschitz continuous function $f$ and under square integrability condition (in \cite{okse:zhan:12} the monotone case is treated but $f$ does not depend on $z$). The Hilbertian structure of $L^2(\Omega,\F_T,\P)$ is used in Cohen and Elliott \cite{cohe:elli:12} (see also \cite{klim:14}). If $L^2(\Omega,\F_T,\P)$ is a separable Hilbert space, then an orthogonal basis of martingales can be introduced instead of $X$ and there is no orthogonal additional term $M$ in \eqref{eq:bsde_gene_filt}. $Z$ becomes a sequence of predictable processes. The special case of a L\'evy noise is treated before by Nualart and Schoutens \cite{nual:scho:01}: the orthogonal basis of martingales is explicitely given by the Teugels martingales.  

In very recent papers, Klimsiak has developed the results concerning BSDEs in this general framework in two directions. First for reflected BSDE (\cite{klim:13b,klim:14}), and secondly for parabolic equations (\cite{klim:rozk:13,klim:13c}) with measure data. 

\subsection*{Main contributions}

Let us outline the main contributions of our paper compared to the existing literature.  

First of all our paper generalizes many results from the works \cite{barl:buck:pard:97, bech:06, morl:10, pard:97, royer:06, tang:li:94} dealing with a filtration generated by the Brownian motion and the Poisson random measure since we allow for a more general filtration. 

Moreover we provide existence and uniqueness of solutions in $L^p$-spaces, $p>1$. In the case where the generator depends on the stochastic integrand w.r.t.\ a Poisson random measure, the case when $p < 2$ has to be handled carefully and can not be treated as in \cite{bria:dely:hu:03}. Indeed in this case Burkholder-Davis-Gundy inequality with $p/2<1$ does not apply and the $L^{p/2}$-norm of the predictable projection cannot be controlled by the $L^{p/2}$-norm of the quadratic variation (see Inequality \eqref{eq:pred_quad_var} and \cite{leng:lepi:prat:80}). Yao \cite{yao:10} obtains similar results but for a generator with at most linear growth w.r.t. $y$ (and for a filtration generated by $W$ and $\pi$). Klimsiak \cite{klim:14} considers $L^p$ solutions of BSDE, with $p\neq 2$, in a general filtration but where the driver only depends on $y$.

Compared to \cite{carb:ferr:sant:07} or \cite{xia:00}, our assumptions are in some sense more restrictive as we assume that the continuous part of the given martingale $X$ of BSDE \eqref{eq:bsde_gene_filt} is a Brownian motion $W$ and the random measure associated to the jumps of $X$ is a Poisson random measure $\pi$. However we weaken the assumptions on the driver and on the terminal condition: the generator is only supposed to be monotone and the terminal condition is allowed to be only $L^p$-integrable. To the best of our knowledge, there is no existence and uniquenesss result for multi-dimensional BSDE with $L^p$ coefficients in a general filtration. The generalization of our results for BSDE of the form \eqref{eq:bsde_gene_filt} requires some sophisticated integrability conditions to take account of the predictable projection $\langle X \rangle$ of the quadratic variation of $X$. Therefore it is left for future research.

Moreover we provide a comparison principle and existence and uniqueness in the case with random terminal time for BSDE of type \eqref{eq:gene_BSDE}. The proof of the comparison principle generalizes the arguments of \cite{quen:sule:13} to the situation where the filtration is not only generated by Brownian and Poisson noise.

Finally our setting is important for the control problem we study in the paper \cite{krus:popi:15} (see also \cite{grae:hors:qiu:13}). The control problem arises in mathematical finance and models the optimal liquidation of a financial position in an illiquid market. In \cite{grae:hors:qiu:13} the authors consider the case when the filtration is generated by a Brownian motion and a independent Poisson measure. In \cite{krus:popi:15} we do not impose any condition on the filtration generated by the market (except right-continuity, completeness and quasi-left continuity) and the Poisson random measure represents the limit orders of the trading strategy. The optimal strategy is determined by a solution of a BSDE of the form \eqref{eq:gene_BSDE} where the generator depends on the Poisson random measure.

\subsection*{Decomposition of the paper}

The paper is decomposed as follows. In the first section, we give the mathematical setting and the main results of this paper. In the second part, we consider square integrable coefficients and we prove existence and uniqueness of the solution. To prove it we mainly follow the scheme of \cite{pard:99} with suitable modifications. In the next part, we extend the result to $L^p$ coeffcients for any $p> 1$. For $p> 2$, the existence is derived from the existence in the $L^2$ case with the right a priori estimate. For $1< p < 2$, an extra computation has to be made since the function $x \mapsto |x|^p$ is not smooth in this case. We have to extend Lemma 2.2 in \cite{bria:dely:hu:03} or Proposition 2.1 in \cite{klim:13b} to our framework. In the last two sections, we add two extensions: the comparison result in dimension one, and existence and uniqueness when the terminal time is a stopping time. Note that the comparison principle requires an extra condition when the generator depends on the jump part (see the counterexample in \cite{barl:buck:pard:97}). But instead of using Girsanov's theorem to obtain the comparison between two solutions, we generalize the argument of \cite{quen:sule:13}, which is less restrictive. This point will be crucial in \cite{krus:popi:15}.

\section{Settings and main results} \label{sect:setting}

Let us consider a filtered probability space $(\Omega,\F,\P,\bF = (\F_t)_{t\geq 0})$. The filtration is assumed to be complete, right continuous and quasi-left continuous, which means that for every sequence $(\tau_n)$ of $\bF$ stopping times such that $\tau_n \nearrow \tilde \tau$ for some stopping time $\tilde \tau$ we have $\bigvee_{n\in \N}\F_{\tau_n}=\F_{\tilde \tau}$.. 
Without loss of generality we suppose that all semimartingales have right continuous paths with left limits. We assume that $(\Omega,\F,\P,\bF = (\F_t)_{t\geq 0})$ supports a $k$-dimensional Brownian motion $W$ and a Poisson random measure $\pi$ with intensity $\mu(du)dt$ on the space $\cU \subset \R^m \setminus \{0\} $. The measure $\mu$ is $\sigma$-finite on $\cU$ such that
$$\int_\cU (1\wedge |u|^2) \mu(du) <+\infty.$$
The compensated Poisson random measure $\tpi(du,dt) = \pi(du,dt) - \mu(du) dt$ is a martingale w.r.t.\ the filtration $\bF$.

In this paper for a given $T\geq 0$, we denote:
\begin{itemize}
\item $\cP$: the predictable $\sigma$-field on $\Omega \times [0,T]$ and
$$\tP=\cP \otimes \mathcal{B}(\cU)$$
where $\mathcal{B}(\cU)$ is the Borelian $\sigma$-field on $\cU$.
\item On $\tOm = \Omega \times [0,T] \times \cU$, a function that is $\tP$-measurable, is called predictable. $G_{loc}(\pi)$ is the set of $\tP$-mesurables functions $\psi$ on $\tOm$ such that for any $t \geq 0$ a.s.
$$ \int_0^t \int_\cU (|\psi_s(u)|^2\wedge |\psi_s(u)|) \mu(du) < +\infty.$$
\item $\cD$ (resp. $\cD(0,T)$): the set of all predictable processes on $\R_+$ (resp. on $[0,T]$). $L^2_{loc}(W)$ is the subspace of $\cD$ such that for any $t\geq 0$ a.s.
$$\int_0^t |Z_s|^2 ds < +\infty.$$
\item $ \cM_{loc}$: the set of c\`adl\`ag local martingales orthogonal to $W$ and $\tpi$. If $M \in \cM_{loc}$ then
$$[ M, W^i]_t =0, 1\leq i \leq k \qquad [M ,\tpi(A,.)]_t = 0$$
for all $A\in \mathcal B(\mathcal U)$.
In other words, $\E (\Delta M * \pi | \tP) = 0$, where the product $*$ denotes the integral process (see II.1.5 in \cite{jaco:shir:03}). Roughly speaking, the jumps of $M$ and $\pi$ are independent. 
\item $\cM$ is the subspace of $ \cM_{loc}$ of martingales.
\end{itemize}
We refer to \cite{jaco:shir:03} (see also \cite{bech:06}) for details on random measures and stochastic integrals. As explained above, the filtration $\bF$ supports the Brownian motion $W$ and the Poisson random measure $\pi$. 
\begin{Lemma}[Lemma III.4.24 in \cite{jaco:shir:03}] \label{lem:decomp_loc_mart}
Every local martingale has a decomposition
$$\int_0^. Z_s dW_s + \int_0^. \int_\cU \psi_s(u) \tpi(du,ds) + M$$
where $M \in \cM_{loc}$, $Z \in L^2_{loc}(W)$, $\psi \in G_{loc}(\mu)$.
\end{Lemma}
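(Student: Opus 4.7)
Since the statement is essentially Jacod--Shiryaev's Lemma III.4.24, my plan is to reduce it to the classical Galtchouk--Kunita--Watanabe (GKW) decomposition applied separately to the continuous and purely discontinuous parts of a local martingale.

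\textbf{Step 1: Split off $N_0$ and decompose into continuous and purely discontinuous parts.} Given a local martingale $N$, I replace it by $N - N_0$ (which is null at $0$) and write $N - N_0 = N^c + N^d$, where $N^c$ is the continuous local martingale part and $N^d$ is the purely discontinuous local martingale part. Under quasi-left continuity of $\bF$, purely discontinuous local martingales admit a nice description in terms of their jumps via the compensated sum-of-jumps formula.

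\textbf{Step 2: Handle the continuous part via GKW against $W$.} Since $W$ is a continuous local martingale with $\langle W^i,W^j\rangle_t=\delta_{ij}\,t$, the GKW decomposition gives a unique $Z\in L^2_{loc}(W)$ such that
\[
N^c = \int_0^\cdot Z_s\,dW_s + M^c,
\]
where $M^c$ is a continuous local martingale with $\langle M^c, W^i\rangle=0$ for all $i$. Because $M^c$ is continuous, its bracket with $\tpi(A,\cdot)$ (which is purely discontinuous) automatically vanishes, so $M^c\in\cM_{loc}$.

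\textbf{Step 3: Handle the purely discontinuous part via GKW against $\tpi$.} Let $\nu^N(\omega;dt,du)$ be the compensator of the integer-valued measure $\mu^N$ associated with the jumps of $N^d$ that coincide with jumps of $\pi$; using the predictable disintegration of the product measure, I define $\psi_s(u)$ as the Radon--Nikodym density that captures the ``piece of $\Delta N^d$ along $\pi$''. Concretely, since $\pi$ has predictable compensator $\mu(du)\,ds$ of product form, one can set
\[
\psi_s(u) \;=\; M^P_\pi(\Delta N_s\,|\,\tP)(s,u),
\]
i.e.\ the conditional expectation of the jump process $\Delta N$ with respect to the Doléans measure of $\pi$ restricted to $\tP$. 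Standard arguments (Jacod--Shiryaev II.1.33) show $\psi\in G_{loc}(\pi)$ and that
\[
N^d - \psi * \tpi =: M^d
\]
is a purely discontinuous local martingale whose jumps are $\tP$-orthogonal to those of $\pi$, i.e.\ $\E(\Delta M^d * \pi\,|\,\tP)=0$; in particular $[M^d,\tpi(A,\cdot)]=0$ for every $A\in\mathcal{B}(\cU)$. Continuity of $W$ gives $[M^d,W^i]=0$ as well, so $M^d\in\cM_{loc}$.

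\textbf{Step 4: Assemble.} Setting $M=M^c+M^d\in\cM_{loc}$ yields the announced decomposition. The main technical obstacle is Step~3: constructing $\psi$ as a predictable function and verifying both the integrability $\psi\in G_{loc}(\pi)$ and the orthogonality $\E(\Delta M^d * \pi\,|\,\tP)=0$. This requires the careful predictable-projection machinery of Jacod--Shiryaev (Chapter~II, \S1) together with quasi-left continuity of $\bF$, which ensures that the compensator $\nu^N$ and the predictable projection behave as expected (no accessible jumps of $N$ can match predictable jumps of $\pi$ beyond the $\tpi$-integral part).
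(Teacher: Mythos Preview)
The paper does not actually prove this lemma: it is stated with an explicit citation to Jacod--Shiryaev, Lemma III.4.24, and no argument is given. So there is no ``paper's own proof'' to compare against; the authors simply import the result from the reference.

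Your sketch is a faithful outline of how the cited result is established in Jacod--Shiryaev: split into continuous and purely discontinuous parts, apply the GKW decomposition against $W$ for the continuous part, and extract the $\tpi$-integral from the purely discontinuous part via the predictable projection $M^P_\pi(\Delta N\mid\tP)$. Two minor remarks. First, in Step~3 you write $[M^d,\tpi(A,\cdot)]=0$; strictly speaking this bracket need not vanish identically --- what orthogonality gives (and what the paper's definition of $\cM_{loc}$ really encodes via $\E(\Delta M*\pi\mid\tP)=0$) is that it is a local martingale, i.e.\ its predictable compensator is zero. The paper itself is slightly informal on this point. Second, the quasi-left continuity of $\bF$ is not actually needed for this particular decomposition (Jacod--Shiryaev's lemma holds without it); it is invoked elsewhere in the paper for other purposes. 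Your Step~4 remark about quasi-left continuity preventing accessible/predictable jump interactions is therefore not essential here, since the Poisson random measure $\pi$ already has totally inaccessible jump times.
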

Now to define the solution of our BSDE, let us introduce the following spaces for $p\geq 1$.
\begin{itemize}
\item $\bD^p(0,T)$ is the space of all adapted \cad processes $X$ such that
$$\E \left(  \sup_{t\in [0,T]} |X_t|^p \right) < +\infty.$$
For simplicity, $X_* = \sup_{t\in [0,T]} |X_t|$.
\item $\bH^p(0,T)$ is the subspace of all processes $X\in \cD(0,T)$ such that
$$\E \left[ \left( \int_0^T |X_t|^2 dt\right)^{p/2} \right] < +\infty.$$
\item $\bM^p(0,T)$ is the subspace of $\cM$ of all martingales such that
$$\E \left[ \left( [ M ]_T \right)^{p/2}\right] < +\infty.$$
\item $\bL^p_\pi(0,T) = \bL^p_{\pi}(\Omega\times (0,T)\times \cU)$: the set of processes $\psi \in G_{loc}(\mu)$ such that
$$\E \left[ \left(  \int_0^T \int_{\cU} |\psi_s(u)|^2 \mu(du) ds \right)^{p/2} \right] < +\infty .$$
\item $\bL^p_\mu=\bL^p(\cU,\mu;\R^d)$: the set of measurable functions $\psi : \cU \to \R^d$ such that
$$\| \psi \|^p_{\bL^p_\mu} = \int_{\cU} |\psi(u)|^p \mu(du)  < +\infty .$$
\item $\cT$ : the set of all finite stopping times and $\cT_T$ the set of all stopping times with values in $[0,T]$.
\item $\cE^p(0,T) = \bD^p(0,T) \times \bH^p(0,T) \times \bL^p_\pi(0,T) \times \bM^p(0,T)$.
\end{itemize}

We consider the following BSDE \eqref{eq:gene_BSDE}:
\begin{equation*} 
Y_t = \xi + \int_t^T f(s,Y_s, Z_s,\psi_s) ds - \int_t^T\int_\cU \psi_s(u) \tpi(du,ds) -\int_t^TZ_sdW_s- \int_t^T dM_s.
\end{equation*}
Here, the random variable $\xi$ is $\F_T$-measurable with values in $\R^d$ ($d\geq 1$) and the generator $f : \Omega \times [0,T] \times \R^d \times \R^{d\times k} \times \bL^2_\mu \to \R^d$ is a random function, measurable with respect to $Prog \times \mathcal{B}(\R^d)\times \mathcal{B}(\R^{d \times k}) \times \mathcal{B}(\bL^2_\mu)$ where $Prog$ denotes the sigma-field of progressive subsets of $\Omega \times [0,T]$.

The unknowns are $(Y,Z,\psi,M)$ such that
\begin{itemize}
\item $Y$ is progressively measurable and \cad with values in $\R^d$;
\item $Z \in L^2_{loc}(W)$, with values in $\R^{d\times k}$;
\item $\psi \in G_{loc}(\mu)$ with values in $\R^d$;
\item $M \in \cM_{loc}$ with values in $\R^d$.
\end{itemize}
On $\R^d$, $|.|$ denotes the Euclidean norm and $\R^{d\times k}$ is identified with the space of real matrices with $d$ rows and $k$ columns. If $z \in  \R^{d\times k}$, we have $|z|^2 = \trace(zz^*)$. If $M$ is a $\R^d$-valued martingale in $\cM$, the bracket process $[ M ]_t$ is
$$[ M ]_t = \sum_{i=1}^d [ M^i ]_t,$$
where $M^i$ is the $i$-th component of the vector $M$.

Throughout the paper, the following assumptions on the generator $f$ are denoted by ${\bf (H_{ex})}$.
\begin{enumerate}
\item[(H1)] For every $t \in [0,T]$, $z\in \R^{d\times k}$ and every $\psi \in \bL^2_\mu$ the mapping $y \in \R^d \mapsto f(t,y,z,\psi)$ is continuous. Moreover there exists a constant
$\alpha$ such that
$$\langle f(t,y,z,\psi)-f(t,y',z,\psi),y-y'\rangle \leq \alpha |y-y'|^2.$$
\item[(H2)] For every $r  > 0$ the mapping $(\omega,t) \mapsto \sup_{|y|\leq r} |f(t,y,0,0)-f(t,0,0,0)|$ belongs to $L^1(\Omega \times [0,T], \P \otimes m)$.
\item[(H3)] $f$ is Lipschitz continuous w.r.t.\ $z$ and $\psi$: there exists a constant $K$ such that for any $t$ and $y$, for any $z,z' \in \R$ and $\psi,\psi'$ in $\bL^2(\mu)$
$$|f(t,y,z,\psi)-f(t,y,z',\psi')| \leq K (|z-z'|+\|\psi- \psi'\|_{\bL^2_\mu}).$$
\end{enumerate}
We can suppose w.l.o.g.\ that $\alpha=0$. Indeed if $(Y,Z,\psi,M)$ is a solution of \eqref{eq:gene_BSDE} then $(\bar Y,\bar Z, \bar \psi, \bar M)$ with
$$\bar Y_t = e^{\alpha t} Y_t, \quad \bar Z_t = e^{\alpha t} Z_t, \quad \bar \psi_t  = e^{\alpha t} \psi_t, \quad d \bar M_t = e^{\al t} dM_t $$
satisfies an analogous BSDE with terminal condition $\bar \xi = e^{\alpha T}\xi$ and generator
$$\bar f(t,y,z,\psi) = e^{\al t} f(t,e^{-\al t} y, e^{-\al t}z, e^{-\al t} \psi) - \al y.$$
$\bar f$ satisfies assumptions ${\bf (H_{ex})}$ with $\al = 0$. Hence in the rest of this paper, we will suppose that $\al = 0$.

Our main results can be summarized as follows. Under Assumptions ${\bf (H_{ex})}$ and if for some $p> 1$
\begin{equation*}
 \E \left(|\xi|^p +  \int_0^T |f(t,0,0,0)|^p dt \right) < +\infty,
 \end{equation*}
there exists a unique solution $(Y,Z,\psi,M)$ in $\cE^p(0,T)$ to the BSDE \eqref{eq:gene_BSDE}. The comparison principle holds for this BSDE. Moreover with a suitable conditions (see (H5') and (H6)) the terminal time $T$ can be replaced by a stopping time $\tau$.

\section{$L^2$ solutions}

Let us begin with the following definition:
\begin{Def}[$L^2$-solution] \label{def:L2_sol_BSDE}
We say that $(Y,Z,\psi,M)$ is a $L^2$-solution of the BSDE $(\xi,f)$ on $[0,T]$ if
\begin{itemize}
\item $(Y,Z,\psi,M) \in \cE^2(0,T)$;
\item and Equation \eqref{eq:gene_BSDE} is satisfied $\P \otimes dt$-a.s.
\end{itemize}
\end{Def}
In the rest of this section, a solution $(Y,Z,\psi,M)$ will be supposed to be in $\cE^2(0,T)$. Now we want to prove existence of the solution of the BSDE with data $(\xi,f)$. For this purpose, we will add the integrability conditions:
\begin{enumerate}
\item[(H4)] $\ds \E (|\xi|^2) + \E \int_0^T |f(t,0,0,0)|^2 dt  < +\infty$.
\end{enumerate}
Some a priori estimates are needed. Note that the following results are modifications of the results obtained in \cite{pard:97}.
\begin{Lemma} \label{lem:estim_mart_part}
Let $(Y,Z,\psi,M) \in \cE^2(0,T)$ be a solution of BSDE \eqref{eq:gene_BSDE}. Then
\begin{eqnarray*}
\E \left( \int_0^T |Z_s|^2 ds +  \int_0^T \int_\cU |\psi_s(u)|^2 \mu(du) ds + [ M ]_T \right) & \leq & C  \E \left( \sup_{t\in[0,T]} |Y_t|^2 + \int_0^T |f(t,0,0,0)|^2 dt  \right)
\end{eqnarray*}
for some constant $C$ depending only on $K^2$ and $T$.
\end{Lemma}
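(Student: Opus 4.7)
The natural approach is to apply It\^o's formula to $|Y_t|^2$ and read off the quadratic variation of the martingale part of $Y$. Writing the BSDE in differential form, $dY_s=-f(s,Y_s,Z_s,\psi_s)\,ds+Z_s\,dW_s+\int_{\cU}\psi_s(u)\tpi(du,ds)+dM_s$, so
\[
|Y_T|^2-|Y_0|^2=-2\int_0^T Y_s\cdot f(s,Y_s,Z_s,\psi_s)\,ds+2\int_0^T Y_{s-}\cdot dN_s+[Y]_T,
\]
where $N$ collects the three stochastic integrals. The key point is that the Brownian part $Z\cdot W$, the compensated Poisson part and $M\in\cM_{loc}$ are pairwise orthogonal by the very definition of $\cM_{loc}$, so their brackets vanish and
\[
[Y]_T=\int_0^T|Z_s|^2\,ds+\int_0^T\!\!\int_{\cU}|\psi_s(u)|^2\,\pi(du,ds)+[M]_T.
\]
Rearranging puts exactly the three quantities we want to estimate on the left-hand side.

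The next step is to take expectations. Since we only assume $(Y,Z,\psi,M)\in\cE^2$, the three stochastic integrals are a priori only local martingales, so I would introduce a localizing sequence $\tau_n\nearrow T$ reducing them to true martingales. The $\cE^2$ bounds, together with the Burkholder--Davis--Gundy and Cauchy--Schwarz inequalities, make the integrands dominated (by a constant multiple of $Y_*^2+\int_0^T|Z_s|^2ds+\int\!\!\int|\psi_s|^2\mu(du)ds+[M]_T$), so monotone/dominated convergence lets me pass to the limit and eliminate the stochastic integrals. Simultaneously the $\pi$-integral of $|\psi|^2$ becomes the $\mu\otimes dt$-integral in expectation.

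What remains is to control the deterministic term $2\E\int_0^T Y_s\cdot f(s,Y_s,Z_s,\psi_s)\,ds$. I would split
\[
f(s,Y_s,Z_s,\psi_s)=\bigl[f(s,Y_s,Z_s,\psi_s)-f(s,0,Z_s,\psi_s)\bigr]+\bigl[f(s,0,Z_s,\psi_s)-f(s,0,0,0)\bigr]+f(s,0,0,0).
\]
The first bracket, paired with $Y_s$, is $\le 0$ by the monotonicity assumption (H1) with $\alpha=0$. The second bracket is bounded by $K(|Z_s|+\|\psi_s\|_{\bL^2_\mu})$ by (H3), so Young's inequality $2K|Y_s|(|Z_s|+\|\psi_s\|_{\bL^2_\mu})\le \tfrac{1}{2}|Z_s|^2+\tfrac{1}{2}\|\psi_s\|_{\bL^2_\mu}^2+ C|Y_s|^2$ lets me absorb the $Z$ and $\psi$ contributions into the left-hand side. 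The last term is handled by $2|Y_s\cdot f(s,0,0,0)|\le |Y_s|^2+|f(s,0,0,0)|^2$, or alternatively by $T\cdot Y_*^2$ plus the $L^2$-norm of $f(\cdot,0,0,0)$.

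Putting everything together and using $|Y_T|^2\le Y_*^2$ and $|Y_s|^2\le Y_*^2$ for $s\le T$ yields the announced bound with constant $C=C(K^2,T)$. The only genuinely delicate point is the localization argument needed to justify that the three stochastic integrals have zero expectation; everything else is a bookkeeping exercise in It\^o calculus and Young's inequality. Because we are only estimating second moments of $(Z,\psi,M)$ against the supremum of $Y$, no BDG inequality is needed in the final bound, which is why this estimate is strictly easier than the dual one controlling $Y_*$ in terms of the martingale parts.
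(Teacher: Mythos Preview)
Your argument is correct and is essentially the paper's own proof: apply It\^o's formula to $|Y|^2$, identify the three pairwise-orthogonal contributions to the quadratic variation, use (H1) with $\alpha=0$ together with (H3) and Young's inequality to bound $2Y_s\cdot f(s,Y_s,Z_s,\psi_s)$ and absorb the $|Z|^2$ and $\|\psi\|_{\bL^2_\mu}^2$ terms, then take expectations. The only cosmetic difference is that the paper writes the It\^o formula directly with the compensator $\mu(du)ds$ on the left and the full increment $|Y_{s^-}+\psi_s|^2-|Y_{s^-}|^2$ against $\tpi$ on the right, and simply asserts (using the $\cE^2$ integrability) that the three stochastic integrals are true martingales, whereas you route through a localizing sequence; both are fine.
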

\begin{proof}
Let $\tau \in \cT_T$ and by It\^o's formula on $|Y_t|^2$:
\begin{eqnarray} \nonumber
&& |Y_{\tau \wedge t}|^2 + \int_{\tau \wedge t}^{\tau} |Z_s|^2 ds + \int_{\tau \wedge t}^{\tau} \int_\cU |\psi_s(u)|^2 \mu(du) ds + [ M]_{\tau} - [ M]_{\tau \wedge t} \\ \nonumber
&& \quad =  |Y_{\tau}|^2 + 2 \int_{\tau \wedge t}^{\tau} Y_s f(s,Y_s,Z_s,\psi_s) ds -2 \int_{\tau \wedge t}^{\tau} Y_{s^-} Z_s dW_s \\ \label{eq:Ito_formula}
&& \qquad  -2 \int_{\tau \wedge t}^{\tau} Y_{s^-} dM_s  -\int_{\tau \wedge t}^{\tau} \int_\cU \left( |Y_{s^-} + \psi_s(u) |^2 - |Y_{s^-}|^2 \right) \tpi(du,ds).
\end{eqnarray}
But from (H1) and (H3):
$$yf(t,y,z,\psi)  \leq |y| \left( K |z| + K \| \psi\|_{\bL^2_\mu} + |f(t,0,0,0)|\right).$$
Hence with $t=0$ and Young's inequality:
\begin{eqnarray*}
&&\frac{1}{2}  \int_0^\tau  |Z_s|^2 ds + \frac{1}{2}  \int_0^\tau \int_\cU |\psi_s(u)|^2 \mu(du) ds +  [M]_\tau \\
&&\quad \leq ((4K^2+1)T+1) \sup_{t\in [0,T]}  |Y_t|^2 + \int_0^T  |f(s,0,0,0)|^2 ds -2 \int_{\tau \wedge t}^{\tau} Y_{s^-} Z_s dW_s \\
&& \qquad -2 \int_0^\tau Y_{s^-} dM_s  -\int_0^\tau \int_\cU \left( |Y_{s^-} + \psi_s(u) |^2 - |Y_{s^-}|^2 \right) \tpi(du,ds).
\end{eqnarray*}
Moreover from the assumptions on $Y$, $Z$, $\psi$ and $M$, the stochastic integral terms w.r.t.\ $W$, $M$ and $\tpi$ are martingales. Now take $\tau=T$ and we can take the expectation on both parts:
\begin{eqnarray*}
&&\E \left( \frac{1}{2}  \int_0^T  |Z_s|^2 ds + \frac{1}{2} \int_0^{T} \int_\cU |\psi_s(u)|^2 \mu(du) ds +  [ M]_{T} \right) \\
&& \qquad  \leq (4K^2+2)T \E \left( \sup_{t\in [0,T]}  |Y_t|^2\right) + \E \int_0^T | f(s,0,0,0)|^2 ds
\end{eqnarray*}
which achieves the proof.
\end{proof}
\begin{Lemma} \label{lem:estim_Y_part}
Let $(Y,Z,\psi,M)$ be a solution of BSDE $(\xi,f)$ with the same conditions as in Lemma \ref{lem:estim_mart_part}.
Then
\begin{eqnarray*}
\E\left(  \sup_{t\in [0,T]} |Y_t|^2 \right) & \leq & C  \E \left( |\xi|^2 + \int_0^T |f(t,0,0,0)|^2 dt  \right)
\end{eqnarray*}
for some constant $C$ depending only on $K$ and $T$.
\end{Lemma}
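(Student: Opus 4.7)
My plan is to apply It\^o's formula to $|Y_t|^2$ on the interval $[t,T]$, exactly as at the start of the proof of Lemma~\ref{lem:estim_mart_part}. Combining (H1) with $\al=0$ and (H3) with Young's inequality on the cross term $2\langle Y_s, f(s,Y_s,Z_s,\psi_s)\rangle$ so as to absorb half of the $|Z|^2$ and $\|\psi\|^2_{\bL^2_\mu}$ contributions, I obtain
\[ |Y_t|^2 \leq |\xi|^2 + C\int_t^T |Y_s|^2\,ds + \int_t^T |f(s,0,0,0)|^2\,ds + \mathcal{N}_t, \]
where $\mathcal{N}_t$ collects the three reverse stochastic integrals from $t$ to $T$ against $W$, $M$ and $\tpi$; under the standing $\cE^2(0,T)$-integrability (and a standard localization) these are true martingales.

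The strategy then has three steps. Step~1: take $\E^{\F_t}$ followed by expectation to kill $\mathcal{N}_t$, then apply backward Gronwall to the resulting inequality $\E|Y_t|^2 \leq A + C\int_t^T\E|Y_s|^2\,ds$, with $A := \E|\xi|^2 + \E\int_0^T |f(s,0,0,0)|^2\,ds$, giving $\sup_{t\in[0,T]}\E|Y_t|^2 \leq A e^{CT}$ and hence $\E\int_0^T|Y_s|^2\,ds \leq C_1 A$. Step~2: evaluate the It\^o inequality at $t=0$ and take expectation to produce the data-only bound $\E\int_0^T|Z_s|^2\,ds + \E\int_0^T\|\psi_s\|^2_{\bL^2_\mu}\,ds + \E[M]_T \leq C_2 A$. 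Step~3: take $\sup_{t\in[0,T]}$ in the It\^o inequality and then expectation; each of the three suprema in $\mathcal{N}_t$ is handled by the Burkholder-Davis-Gundy inequality combined with Young's inequality. For the Brownian piece,
\[ \E\sup_t\Big|\int_t^T Y_{s^-}Z_s\,dW_s\Big| \leq 2c\,\E\Big[\sup_s|Y_s|\Big(\int_0^T|Z_s|^2\,ds\Big)^{1/2}\Big] \leq \varepsilon\,\E\sup_s|Y_s|^2 + \frac{c^2}{\varepsilon}\,\E\int_0^T|Z_s|^2\,ds, \]
and analogously for the $M$-piece. Inserting the bracket bound from Step~2 and choosing $\varepsilon$ small enough to absorb the resulting $3\varepsilon\,\E\sup|Y|^2$ on the left yields the asserted estimate.

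The main technical subtlety is the Poisson term in $\mathcal{N}_t$. After writing $|Y+\psi|^2 - |Y|^2 = 2\langle Y,\psi\rangle + |\psi|^2$, one applies BDG to each of the two compensated integrals; the awkward $|\psi|^4$ contribution to the quadratic variation of the second piece is controlled via the elementary inequality $\sqrt{\sum_i a_i^2}\leq\sum_i |a_i|$, which yields
\[ \Big(\int_0^T\!\int_{\cU}|\psi_s(u)|^4\,\pi(du,ds)\Big)^{1/2} \leq \int_0^T\!\int_{\cU}|\psi_s(u)|^2\,\pi(du,ds). \]
The expectation of the right-hand side equals $\E\int_0^T\|\psi_s\|^2_{\bL^2_\mu}\,ds$, and is therefore absorbed into the Step~2 bracket bound, closing the argument.
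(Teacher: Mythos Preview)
Your argument is correct. The overall architecture---It\^o on $|Y|^2$, absorb half of the $|Z|^2$ and $\|\psi\|^2_{\bL^2_\mu}$ contributions via Young, bound the brackets in terms of the data, and finish with BDG---matches the paper. Two points where you diverge:

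\emph{Gronwall vs.\ exponential weight.} You handle the residual $\int_t^T|Y_s|^2\,ds$ via backward Gronwall. The paper instead applies It\^o to $e^{\beta s}|Y_s|^2$ with $\beta=2(1+2K^2)$, which kills the $|Y|^2$-integral at the cost of a multiplicative $e^{\beta T}$. This is a matter of taste; the resulting constants are equivalent.

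\emph{The Poisson martingale term.} This is the more substantive difference. You keep the full jump martingale $\int_t^T\!\int_\cU\big(|Y_{s^-}+\psi_s(u)|^2-|Y_{s^-}|^2\big)\tpi(du,ds)$, split it as $2\langle Y_{s^-},\psi_s\rangle+|\psi_s|^2$, and then control the BDG contribution of the second piece by the pointwise jump inequality
$\big(\sum_i a_i^2\big)^{1/2}\leq\sum_i|a_i|$, turning $(\int\!\int|\psi|^4\pi)^{1/2}$ into $\int\!\int|\psi|^2\pi$, whose expectation is the bracket bound from Step~2. The paper avoids this altogether: in its second pass at the It\^o formula it writes the $|\psi|^2$-term against the \emph{uncompensated} measure $\pi$, so that this non-negative term sits on the left and can be dropped, leaving only the linear jump martingale $2\int Y_{s^-}\psi_s\,\tpi$ to estimate by BDG (see \eqref{eq:L2_estimate_1_bis} and \eqref{eq:L2_estimate_4}). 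Your route is a genuine and correct alternative; the paper's has the advantage of never producing a $|\psi|^4$-term.
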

\begin{proof}
For $0\leq t \leq T$, let us apply It\^o formula \eqref{eq:Ito_formula} to $e^{\beta s} |Y_s|^2$ where $\beta$ will be chosen later. We have:
\begin{eqnarray*}
e^{\beta t}|Y_{t}|^2 &= & e^{\beta T}|Y_{T}|^2+2 \int_{t}^{T}  e^{\beta s}Y_s f(s,Y_s,Z_s,\psi_s) ds  \\
& - &\int_{t }^{T}  \beta e^{\beta s}|Y_s|^2 ds - \int_{t}^{T}  \int_\cU e^{\beta s} | \psi_s(u) |^2 \mu(du) ds\\
& - &  \int_{t}^{T} e^{\beta s} d[ M ]_s -\int_{t}^{T}  e^{\beta s} |Z_s|^2 ds \\
&- &  2  \int_{t}^{T} e^{\beta s}Y_{s^-}  dM_s - 2  \int_{t}^{T} e^{\beta s}Y_{s^-}  Z_s dW_s   \\
& - &   \int_{t}^{T} e^{\beta s} \int_\cU  \left( |Y_{s^-} + \psi_s(u) |^2 - |Y_{s^-}|^2 \right) \tpi(du,ds).
\end{eqnarray*}
From the assumptions on $f$, we have for any $\eps > 0$
\begin{eqnarray} \nonumber
Y_s f(s,Y_s,Z_s,\psi_s) &= & Y_s (f(s,Y_s,Z_s,\psi_s)- f(s,0,Z_s,\psi_s)) + Y_s (f(s,0,Z_s,\psi_s)- f(s,0,0,\psi_s)) \\ \nonumber
&+& Y_s (f(s,0,0,\psi_s)- f(s,0,0,0))+  Y_sf(s,0,0,0)\\ \nonumber
& \leq & |Y_s| |f(s,0,0,0)| + K |Y_s| \|\psi_s\|_{\bL^2_\mu}  + K|Y_s| |Z_s| \\ \label{eq:young_growth_f}
& \leq & \frac{(1+2K^2)}{2\eps} |Y_s|^2 + \frac{\eps}{2} ( |f(s,0,0,0)|^2 + \|\psi_s\|_{\bL^2_\mu}^2 + |Z_s|^2).
\end{eqnarray}
We take $\eps = 1/2$ and we obtain:
\begin{eqnarray*}
&& e^{\beta t}|Y_{t}|^2 + \frac{1}{2} \int_{t}^{T} e^{\beta s} |Z_s|^2 ds + \frac{1}{2}  \int_{t}^{T}  e^{\beta s} \| \psi_s\|^2_{\bL^2_\mu}  ds + \frac{1}{2} \int_{t}^{T} e^{\beta s} d[ M]_s\\
&& \quad \leq e^{\beta T}|Y_{T}|^2 +\frac{1}{2}  \int_{t}^{T }  e^{\beta s} |f(s,0,0,0)|^2 ds   \\
&& \qquad - \int_{t}^{T}  \left( \beta - 2(1+2K^2)\right) e^{\beta s}|Y_s|^2 ds + \Gamma_{t,T}
\end{eqnarray*}
where $\Gamma_{t,.}$ is a local martingale starting at zero at time $t$. Fix $\beta = 2(1+2K^2)$ and we have:
\begin{eqnarray}\nonumber
&& e^{\beta t}|Y_{t}|^2 + \frac{1}{2}  \int_{t}^{T }  e^{\beta s} \| \psi_s\|^2_{\bL^2_\mu}  ds + \frac{1}{2} \int_{t}^{T} e^{\beta s} d[M]_s\\ \label{eq:L2_estimate1}
&& \quad \leq e^{\beta T}|Y_{T}|^2 +\frac{1}{2}  \int_{t}^{T}  e^{\beta s} |f(s,0,0,0)|^2 ds + \Gamma_{t,T} .
\end{eqnarray}
Since all local martingales are true martingales, we deduce that
\begin{eqnarray} \nonumber
&&\sup_{t\in [0,T]}  \E |Y_t|^2 + \E \int_0^T  | Z_s|^2 ds  + \E \int_0^T  \| \psi_s\|^2_{\bL^2_\mu}  ds + \E [M]_T \\ \label{eq:L2_estimate_bis}
&& \qquad \leq C  \E \left( |\xi|^2 + \int_0^T |f(t,0,0,0)|^2 dt  \right).
\end{eqnarray}
Now with $\beta = 2(1+2K^2)$ we write the It\^o formula in a different way:
\begin{eqnarray*}
e^{\beta t}|Y_{t}|^2 &= & e^{\beta T}|Y_{T}|^2+2 \int_{t}^{T}  e^{\beta s}Y_s f(s,Y_s,Z_s,\psi_s) ds  \\
& - &\int_{t }^{T}  \beta e^{\beta s}|Y_s|^2 ds - \int_{t}^{T}  \int_\cU e^{\beta s} | \psi_s(u) |^2 \pi(du, ds)\\
& - &  \int_{t}^{T} e^{\beta s} d[ M ]_s -\int_{t}^{T}  e^{\beta s} |Z_s|^2 ds \\
&- &  2  \int_{t}^{T} e^{\beta s}Y_{s^-}  dM_s - 2  \int_{t}^{T} e^{\beta s}Y_{s^-}  Z_s dW_s   \\
& - &  2 \int_{t}^{T} e^{\beta s} \int_\cU  Y_{s^-} \psi_s(u) \tpi(du,ds)
\end{eqnarray*}
which gives with \eqref{eq:young_growth_f}:
\begin{eqnarray}\nonumber
&&e^{\beta t}|Y_{t}|^2 \leq e^{\beta T}|Y_{T}|^2 +\frac{1}{2}  \int_{t}^{T}  e^{\beta s} |f(s,0,0,0)|^2 ds + \frac{1}{4}  \int_{t}^{T }  e^{\beta s} \| \psi_s\|^2_{\bL^2_\mu}  ds \\ \label{eq:L2_estimate_1_bis}
&& \quad -  2  \int_{t}^{T} e^{\beta s}Y_{s^-}  dM_s - 2  \int_{t}^{T} e^{\beta s}Y_{s^-}  Z_s dW_s - 2 \int_{t}^{T} e^{\beta s} \int_\cU  Y_{s^-} \psi_s(u) \tpi(du,ds)
\end{eqnarray}
Next we apply the Burkholder-Davis-Gundy inequality to obtain
\begin{eqnarray} \nonumber
&& \E \sup_{t\in [0,T]} \left|  \int_{0}^{t} e^{\beta s} Y_{s^-} dM_s \right|  \leq c \E \left(   \int_{0}^{T} e^{2\beta s} |Y_{s^-}|^{2} d[M ]_s \right)^{1/2} \\ \label{eq:L2_estimate_2}
&& \qquad \qquad \leq \frac{1}{8} \E \left( \sup_{t \in [0,T]}  e^{\beta t}| Y_{t}|^{2}  \right) + 2 c^2 \E  \int_{0}^{T} e^{\beta s}d[ M ]_s.
\end{eqnarray}
By the same arguments we have
\begin{eqnarray} \nonumber
&& \E \sup_{t\in [0,T]} \left|  \int_{0}^{t} e^{\beta s}Y_{s^-} Z_s dW_s \right|  \leq c \E \left(   \int_{0}^{T} e^{2\beta s} |Y_{s^-}|^{2}  | Z_s|^2 ds \right)^{1/2} \\  \label{eq:L2_estimate_3}
&& \qquad \qquad \leq \frac{1}{8} \E \left( \sup_{t \in [0,T]}  e^{\beta t}| Y_{t}|^{2}  \right) + 2c^2 \E  \int_{0}^{T} e^{\beta s} |Z_s|^2 ds .
\end{eqnarray}
Finally the same result holds for the martingale
$$\int_{0}^{.} \int_\cU \left( Y_{s^-} \psi_s(u) \right) \tpi(du,ds),$$
with
\begin{eqnarray} \nonumber
&& \E \sup_{t\in [0,T]} \left| \int_{0}^{t} e^{\beta s} \int_\cU \left( Y_{s^-} \psi_s(u) \right) \tpi(du,ds) \right|  \leq c \E \left(   \int_{0}^{T} e^{2\beta s}|Y_{s^-}|^{2}\int_\cU | \psi_s(u)|^2 \pi(du,ds) \right)^{1/2} \\  \label{eq:L2_estimate_4}
&& \qquad \qquad \leq \frac{1}{8} \E \left( \sup_{t \in [0,T]} e^{\beta t} |Y_{t}|^{2}  \right) + 2c^2 \E  \int_{0}^{T} e^{\beta s}\| \psi_s\|_{\bL^2_\mu}^2 ds.
\end{eqnarray}
Now coming back to \eqref{eq:L2_estimate_1_bis}, and using estimates \eqref{eq:L2_estimate_2}, \eqref{eq:L2_estimate_3}, \eqref{eq:L2_estimate_4} and \eqref{eq:L2_estimate_bis}, we deduce that there exists $C$ depending on $K$ and $T$ such that
\begin{eqnarray*}
\E\left(  \sup_{t\in [0,T]} |Y_t|^2 \right) & \leq & C  \E \left( |\xi|^2 + \int_0^T |f(t,0,0,0)|^2 dt  \right).
\end{eqnarray*}
This achieves the proof. 
\end{proof}

The next result is an extension of the Proposition 2.1 in \cite{bria:carm:00}. For convenience let us give the result and the proof.
\begin{Lemma} \label{lem:Linfinity_estim}
Let $(Y,Z,\psi,M) \in \cE^2(0,T)$ be a solution of BSDE \eqref{eq:gene_BSDE} with bounded terminal condition $\xi$ and generator $|f(t,0,0,0)|$: there exists a constant $\kappa$ such that a.s.
\begin{equation} \label{eq:bounded_assump}
\sup_{t\in [0,T]} |f(t,0,0,0)| + |\xi| \leq \kappa.
\end{equation}
Then $Y$ is also almost surely bounded: there exists a constant $\beta = 2(1+2K^2)$ such that almost surely and for any $t \in [0,T]$
\begin{eqnarray*}
|Y_t|^2 \leq \kappa^2 e^{\beta (T-t)} \left( 1 + \frac{1}{2\beta} \right).
\end{eqnarray*}
\end{Lemma}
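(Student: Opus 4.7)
The plan is to recycle the Itô expansion already obtained in the proof of Lemma \ref{lem:estim_Y_part} and convert the pathwise bound \eqref{eq:L2_estimate1} into a pointwise-in-$\omega$ bound via conditional expectation. Specifically, with $\beta = 2(1+2K^2)$, inequality \eqref{eq:L2_estimate1} gives
\begin{equation*}
e^{\beta t}|Y_t|^2 + \frac{1}{2}\int_t^T e^{\beta s}\|\psi_s\|_{\bL^2_\mu}^2\,ds + \frac{1}{2}\int_t^T e^{\beta s}\,d[M]_s \leq e^{\beta T}|\xi|^2 + \frac{1}{2}\int_t^T e^{\beta s}|f(s,0,0,0)|^2\,ds + \Gamma_{t,T},
\end{equation*}
where $\Gamma_{t,\cdot}$ is the sum of the three stochastic integrals against $dW$, $dM$ and $\tpi$ arising in \eqref{eq:Ito_formula}. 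I would drop the two nonnegative integrals on the left-hand side, take $\E[\,\cdot\,\mid \F_t]$ of both sides, and then apply the uniform bound \eqref{eq:bounded_assump}.

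The first step is to ensure that $\Gamma_{t,\cdot}$ is a true martingale on $[t,T]$, so that its conditional expectation vanishes. Since $(Y,Z,\psi,M) \in \cE^2(0,T)$, the Burkholder--Davis--Gundy estimates \eqref{eq:L2_estimate_2}, \eqref{eq:L2_estimate_3} and \eqref{eq:L2_estimate_4} already used in the proof of Lemma \ref{lem:estim_Y_part} show that each of the three integrands has a square-integrable maximal function, hence each integral is a uniformly integrable martingale and $\E[\Gamma_{t,T}\mid\F_t] = 0$. If one wishes to be scrupulous, the same conclusion follows by localising along a sequence $\tau_n \nearrow T$, taking the inequality on $[t,\tau_n]$, conditioning and passing to the limit by monotone convergence in the discarded nonnegative terms.

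After conditioning and using $|\xi| \leq \kappa$ and $|f(s,0,0,0)| \leq \kappa$ almost surely, one gets
\begin{equation*}
e^{\beta t}|Y_t|^2 \leq \kappa^2 e^{\beta T} + \frac{\kappa^2}{2}\int_t^T e^{\beta s}\,ds = \kappa^2 e^{\beta T} + \frac{\kappa^2}{2\beta}\bigl(e^{\beta T} - e^{\beta t}\bigr).
\end{equation*}
Multiplying by $e^{-\beta t}$ and bounding $e^{\beta(T-t)} - 1 \leq e^{\beta(T-t)}$ yields
\begin{equation*}
|Y_t|^2 \leq \kappa^2 e^{\beta(T-t)}\Bigl(1 + \frac{1}{2\beta}\Bigr),
\end{equation*}
which is the announced estimate. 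The main obstacle is really only the martingale justification of the second paragraph; once the conditional expectation step is legitimate, the remaining bound is a one-line integration.
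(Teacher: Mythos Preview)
Your argument is correct and matches the paper's own proof essentially line for line: both start from inequality \eqref{eq:L2_estimate1}, observe that the local martingale $\Gamma_{t,\cdot}$ is a true martingale (so its $\F_t$-conditional expectation vanishes), and then bound the conditional expectation using \eqref{eq:bounded_assump}. Your write-up is in fact more detailed than the paper's, which compresses the martingale justification and the integral computation into a single sentence.
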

\begin{proof}
We use Inequality \eqref{eq:L2_estimate1} and since 
the involved local martingale $\Gamma$ is a martingale, taking the conditional expectation w.r.t.\ $\F_t$ leads to: a.s. for every $t \in [0,T]$
\begin{eqnarray*}
&& |Y_{t}|^2 \leq \E\left[ e^{\beta(T-t)}|\xi|^2 +\frac{1}{2} \int_{t}^{T}  e^{\beta(s-t)} |f(s,0,0,0)|^2 ds  \bigg| \F_t \right].
\end{eqnarray*}
Hence $Y \in \cD^{\infty}(0,T)$.
\end{proof}

Now we prove a stability result. 
\begin{Lemma} \label{lem:L2_stability}
Let now $(\xi,f)$ and $(\xi',f')$ be two sets of data each satisfying the above assumptions ${\bf (H_{ex})}$ and (H4). Let $(Y,Z,\psi,M)$ (resp. $(Y',Z',\psi',M')$) denote a $L^2$-solution of the BSDE \eqref{eq:gene_BSDE} with data $(\xi,f)$ (resp. $(\xi',f')$). Define
$$(\what Y,\what Z, \what \psi, \what M, \what \xi, \what f) = (Y-Y',Z-Z',\psi-\psi', M-M', \xi - \xi', f-f').$$
Then there exists a constant $C$ depending on $K^2$ and $T$, such that
\begin{eqnarray*}
&& \E\left(  \sup_{t\in [0,T]} |\what Y_t|^2 + \int_0^T |\what Z_s|^2 ds + \int_0^T \int_\cU |\what \psi_s(u)|^2 \mu(du) ds + [ \what M ]_T \right) \\
&& \qquad \leq  C  \E \left( |\what \xi|^2 + \int_0^T |\what f(t,Y'_t,Z'_t,\psi'_t) |^2 dt  \right).
\end{eqnarray*}
\end{Lemma}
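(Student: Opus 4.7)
The plan is to mimic closely the arguments of Lemmas \ref{lem:estim_mart_part} and \ref{lem:estim_Y_part}, applied to the difference of the two solutions. Observe first that $(\what Y,\what Z,\what \psi,\what M)$ satisfies
$$\what Y_t = \what\xi + \int_t^T \Bigl[f(s,Y_s,Z_s,\psi_s)-f'(s,Y'_s,Z'_s,\psi'_s)\Bigr]ds -\int_t^T \int_\cU \what\psi_s(u)\tpi(du,ds)-\int_t^T\what Z_s dW_s -\int_t^T d\what M_s.$$
I split the driver difference as
$$f(s,Y_s,Z_s,\psi_s)-f'(s,Y'_s,Z'_s,\psi'_s) = \bigl[f(s,Y_s,Z_s,\psi_s)-f(s,Y'_s,Z'_s,\psi'_s)\bigr] + \what f(s,Y'_s,Z'_s,\psi'_s),$$
and apply It\^o's formula to $e^{\beta s}|\what Y_s|^2$ exactly as in \eqref{eq:Ito_formula}, with $\beta$ to be fixed later.

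The monotonicity (H1) with $\alpha=0$ applied between $(Y_s,Z_s,\psi_s)$ and $(Y'_s,Z_s,\psi_s)$, together with the Lipschitz property (H3) applied between $(Y'_s,Z_s,\psi_s)$ and $(Y'_s,Z'_s,\psi'_s)$, yields
$$\langle \what Y_s, f(s,Y_s,Z_s,\psi_s)-f(s,Y'_s,Z'_s,\psi'_s)\rangle \leq K|\what Y_s|\bigl(|\what Z_s|+\|\what\psi_s\|_{\bL^2_\mu}\bigr).$$
Combined with the trivial bound $\langle \what Y_s, \what f(s,Y'_s,Z'_s,\psi'_s)\rangle\le \tfrac12|\what Y_s|^2+\tfrac12|\what f(s,Y'_s,Z'_s,\psi'_s)|^2$ and Young's inequality on the other cross terms, this plays exactly the same role as the estimate \eqref{eq:young_growth_f} in the proof of Lemma \ref{lem:estim_Y_part}. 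Choosing $\beta = 2(1+2K^2)+1$ absorbs all the $|\what Y_s|^2$ terms on the right-hand side and produces an analogue of inequality \eqref{eq:L2_estimate1} with $|\xi|^2,|f(\cdot,0,0,0)|^2$ replaced by $|\what\xi|^2,|\what f(\cdot,Y'_\cdot,Z'_\cdot,\psi'_\cdot)|^2$.

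Taking expectations, the local martingale terms (stochastic integrals with respect to $W$, $\tpi$ and $\what M$) have zero expectation because $(\what Y,\what Z,\what\psi,\what M)\in\cE^2(0,T)$, so I obtain the $L^2$-bound on $\int_0^T|\what Z_s|^2ds$, $\int_0^T\|\what\psi_s\|^2_{\bL^2_\mu}ds$, $[\what M]_T$ and $\sup_t\E|\what Y_t|^2$. Then, returning to the It\^o identity in the form \eqref{eq:L2_estimate_1_bis} and applying the Burkholder--Davis--Gundy inequality to the three martingale terms exactly as in \eqref{eq:L2_estimate_2}--\eqref{eq:L2_estimate_4} (with the already-obtained bound absorbing the resulting $\int_0^T e^{\beta s}|\what Z_s|^2 ds$, $\int_0^T e^{\beta s}\|\what\psi_s\|^2_{\bL^2_\mu}ds$ and $\int_0^T e^{\beta s}d[\what M]_s$ contributions), one transfers the sup inside the expectation and concludes.

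No genuine obstacle arises here; the only point to watch is that in splitting the driver one must keep the middle variable $(Y'_s,Z_s,\psi_s)$ so that the monotonicity in $y$ (which does not degrade into a Lipschitz constant) can be used to kill the $|\what Y_s|^2$ contribution coming from the $y$-slot, leaving only the Lipschitz contributions in $z$ and $\psi$ which Young's inequality allows to absorb into the good terms $|\what Z_s|^2$ and $\|\what\psi_s\|^2_{\bL^2_\mu}$ on the left-hand side.
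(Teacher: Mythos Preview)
Your argument is correct. The only difference from the paper's own proof is a matter of packaging: you follow the scheme of Lemma~\ref{lem:estim_Y_part} (exponential weight $e^{\beta s}$ to absorb the $|\what Y_s|^2$ term, then Burkholder--Davis--Gundy to pull the supremum inside the expectation), whereas the paper applies It\^o to $|\what Y_t|^2$ without weight, uses Gronwall's lemma to control $\E|\what Y_t|^2$, and then Doob's inequality on the conditional expectation to obtain $\E\sup_t|\what Y_t|^2$. Both routes are standard and interchangeable here; your choice of $\beta$ and the driver splitting are exactly right, and the care you take to pass through the intermediate point $(Y'_s,Z_s,\psi_s)$ so that monotonicity in $y$ can be invoked is precisely the crux of the argument.
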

As a consequence of this lemma, we obtain uniqueness of the solution $(Y,Z,\psi,M)$ for the BSDE \eqref{eq:gene_BSDE} in the set $\cE^2(0,T)$ (see also Corollary \ref{coro:uniq_sol} in dimension $d=1$).

\noindent \begin{proof}
Let $\tau \in \cT_T$ and by It\^o's formula on $|\what Y_t|^2$:
\begin{eqnarray*} \nonumber
&& |\what Y_{\tau \wedge t}|^2  + \int_{\tau \wedge t}^{\tau} |\what Z_s|^2 ds + \int_{\tau \wedge t}^{\tau} \int_\cU |\what \psi_s(u)|^2 \mu(du) ds +  [ \what M]_{\tau} -[ \what M]_{\tau \wedge t} \\ \nonumber
&& =  |\what Y_{\tau}|^2 + 2 \int_{\tau \wedge t}^{\tau} \what Y_s (f(s,Y_s,Z_s,\psi_s)-f'(s,Y'_s,Z'_s,\psi'_s)) ds - 2 \int_{\tau \wedge t}^{\tau} \what Y_{s^-} \what Z_s dW_s\\
&& \qquad -2 \int_{\tau \wedge t}^{\tau} \what Y_{s^-} d\what M_s  -\int_{\tau \wedge t}^{\tau} \int_\cU \left( |\what Y_{s^-} +\what  \psi_s(u) |^2 - |\what Y_{s^-}|^2 \right) \tpi(du,ds).
\end{eqnarray*}
From the monotonicity assumption on the generator and Young's inequality, we have:
\begin{eqnarray} \nonumber
&& |\what Y_{\tau \wedge t}|^2 +\frac{1}{2} \int_{\tau \wedge t}^{\tau} |\what Z_s|^2 ds + \frac{1}{2}\int_{\tau \wedge t}^{\tau} \int_\cU |\what \psi_s(u)|^2 \mu(du) ds + [ \what M]_{\tau} -[ \what M]_{\tau \wedge t} \\ \nonumber
&& \leq  |\what Y_{\tau}|^2 + (4K^2+1) \int_{\tau \wedge t}^{\tau} |\what Y_s|^2 ds + \int_{\tau \wedge t}^{\tau}  |\what f(s,Y'_s,Z'_s,\psi'_s)|^2 ds - 2 \int_{\tau \wedge t}^{\tau} \what Y_{s^-} \what Z_s dW_s\\ \label{eq:Ito_formula_2}
&& \qquad -2 \int_{\tau \wedge t}^{\tau} \what Y_{s^-} d\what M_s  -\int_{\tau \wedge t}^{\tau} \int_\cU \left( |\what Y_{s^-} +\what  \psi_s(u) |^2 - |\what Y_{s^-}|^2 \right) \tpi(du,ds).
\end{eqnarray}
With $\tau=T$ and Gronwall's lemma, we have for any $t\in [0,T]$
$$\E  |\what Y_t|^2 \leq C  \E \left( |\what \xi|^2 + \int_0^T |\what f(s,Y'_s,Z'_s,\psi'_s) |^2 ds  \right).$$
Then using \eqref{eq:Ito_formula_2} with $t=0$ and $\tau=T$ and the previous inequality we obtain
$$\E\left( \int_0^T |\what Z_s|^2ds +  \int_0^{T} \int_\cU |\what \psi_s(u)|^2 \mu(du) ds + [\what M]_{T} \right) \leq  C  \E \left( |\what \xi|^2 + \int_0^T |\what f(s,Y'_s,Z'_s,\psi'_s) |^2 ds  \right).$$
Finally take the conditional expectation w.r.t.\ $\F_t$ in \eqref{eq:Ito_formula_2}, the supremum over $t \in [0,T]$ on both sides and applying Doob's inequality to the supremum of the $(\F_{\tau \wedge t}, \ t\in[0,T])$ martingale on the right-hand side, we have:
$$\E \left( \sup_{t\in [0,T]} |\what Y_t|^2 \right) \leq C  \E \left( |\what \xi|^2 + \int_0^T |\what f(s,Y'_s,Z'_s,\psi'_s) |^2 ds  \right).$$
This completes the proof.
\end{proof}

Let us modify a little the growth assumption (H2):
\begin{enumerate}
\item[(H2')] For every $(t,y) \in [0,T]\times \R^d$, $ |f(t,y,0,0)| \leq |f(t,0,0,0)|+\vartheta(|y|)$ where $\vartheta : \R_+ \to \R_+$ is a deterministic continuous increasing function.
\end{enumerate}

Now we can prove the following result.
\begin{Prop} \label{prop:existence_growth_cond_gene}
Under assumptions (H1)-(H2')-(H3) and (H4), there exists a unique $L^2$-solution $(Y,Z,\psi,M)$ for the BSDE \eqref{eq:gene_BSDE}.
\end{Prop}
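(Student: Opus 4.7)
Uniqueness is immediate from Lemma \ref{lem:L2_stability} applied to two solutions sharing the data $(\xi,f)$. For existence I would follow a three-layer approximation: (a) solve the BSDE when $f$ is globally Lipschitz in $(y,z,\psi)$ by Picard iteration on $\cE^2(0,T)$; (b) reduce the monotone case with bounded data to (a) by regularizing $f$ in its $y$-argument; (c) remove the boundedness assumption by truncating $\xi$ and $f(\cdot,0,0,0)$ and passing to the limit. All three steps lean on the a priori bounds of Lemmas \ref{lem:estim_mart_part}--\ref{lem:Linfinity_estim} and the stability estimate of Lemma \ref{lem:L2_stability}.

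\textbf{Step (a)} rests on Lemma \ref{lem:decomp_loc_mart}. Given $(y,z,\psi,m)\in\cE^2(0,T)$, the square-integrable martingale $N_t=\E[\xi+\int_0^T f(s,y_s,z_s,\psi_s)\,ds\mid\F_t]$ decomposes as $N_0+\int_0^\cdot\bar Z_s\,dW_s+\int_0^\cdot\int_\cU\bar\psi_s(u)\,\tpi(du,ds)+\bar M_\cdot$ with $(\bar Z,\bar\psi,\bar M)\in\bH^2\times\bL^2_\pi\times\bM^2$. Setting $Y_t=N_t-\int_0^t f(s,y_s,z_s,\psi_s)\,ds$ defines a map $\Phi:(y,z,\psi,m)\mapsto(Y,\bar Z,\bar\psi,\bar M)$. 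Equipped with the $\beta$-weighted norm underlying the proof of Lemma \ref{lem:estim_Y_part}, the space $\cE^2(0,T)$ is complete and $\Phi$ becomes a strict contraction for $\beta$ large enough; Banach's theorem yields a unique fixed point. For \textbf{step (b)}, assume $\xi$ and $f(\cdot,0,0,0)$ are bounded by some $\kappa$. Approximate $f$ by a sequence $f_n$ that is monotone in $y$ with the same constant $\alpha=0$, inherits (H3), is globally Lipschitz in $y$, and converges to $f$ locally uniformly; a natural choice is a Moreau--Yosida or truncation-based regularization of the $y$-dependence. Step (a) produces solutions $(Y^n,Z^n,\psi^n,M^n)$, and Lemma \ref{lem:Linfinity_estim} supplies a uniform deterministic bound $|Y^n|\le R$ independent of $n$. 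Once $n\ge R$, $f_n$ agrees with $f$ along the trajectories $(Y^n,Z^n,\psi^n)$ (or, alternatively, Lemma \ref{lem:L2_stability} forces the sequence to be Cauchy), and the limit solves the original BSDE with bounded data.

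\textbf{Step (c)} removes the boundedness: set $\xi_N=\xi\mathbf{1}_{|\xi|\le N}$ and truncate $f(\cdot,0,0,0)$ at level $N$ without altering the dependence of $f$ on $(y,z,\psi)$. Step (b) furnishes solutions $(Y^N,Z^N,\psi^N,M^N)\in\cE^2(0,T)$. Lemma \ref{lem:L2_stability} applied to successive approximations, combined with the $L^2$ convergence of $\xi_N\to\xi$ and of the truncated generator data that follows from (H4), makes the sequence Cauchy in $\cE^2(0,T)$. The limit $(Y,Z,\psi,M)$ satisfies the BSDE by dominated convergence, using (H1) for continuity in $y$ and (H3) for continuity in $(z,\psi)$. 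The \textbf{main obstacle} I anticipate is step (b): constructing an approximation $f_n$ that is simultaneously globally Lipschitz in $y$, monotone with the same constant, and close to $f$ on large balls—the naive radial truncation $f(t,\Pi_n y,z,\psi)$ is only continuous, so a genuine Moreau--Yosida smoothing of the monotone $y$-part is what makes the argument tight. An additional technicality is the infinite-dimensional $\psi$-argument living in $\bL^2_\mu$, but the Lipschitz assumption (H3) absorbs it cleanly at every stage.
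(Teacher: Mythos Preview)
Your three-layer scheme is sound and lands in the same place, but the paper organizes the middle layer differently. Instead of regularizing the full generator $f(t,y,z,\psi)$ in $y$ and passing to the limit via the stability lemma, the paper first \emph{freezes} the $(z,\psi)$-arguments: for given $(V,\phi)\in\bH^2\times\bL^2_\pi$ it establishes existence for the BSDE with driver $f(s,Y_s,V_s,\phi_s)$ (Lemma \ref{lem:tech_proof_existence}), and only afterwards runs a second contraction in $(V,\phi)$ on $(\cE^2,\|\cdot\|_\beta)$. Inside that lemma the regularization is the two-step one you allude to---mollify $y\mapsto f(t,y)$ to get smooth monotone $f_n$ that are only \emph{locally} Lipschitz, then compose with the radial cutoff $q_p$ to make them globally Lipschitz; the $L^\infty$ bound of Lemma \ref{lem:Linfinity_estim} removes the $p$-truncation, and the passage $n\to\infty$ is done by \emph{weak compactness} in $L^2(\Omega\times[0,T])$ (bounded sequence $(Y^n,U^n,Z^n,\psi^n,M^n)$, weakly convergent subsequence, identification of the limit $U=f(\cdot,Y)$) rather than by the stability estimate.

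The paper's detour buys two things: freezing $(V,\phi)$ turns the regularization into a purely finite-dimensional problem in $y$, sidestepping exactly the obstacle you flag about building an $f_n$ that is simultaneously globally Lipschitz in $y$, monotone, and uniformly Lipschitz in the infinite-dimensional $\psi$-slot; and the weak-limit argument avoids having to control $\E\int_0^T|f_m-f_n|^2(s,Y^n_s,Z^n_s,\psi^n_s)\,ds$, which in your approach would require a dominated bound on the regularized drivers along the full random triple $(Y^n,Z^n,\psi^n)$. Your direct route is not wrong, but to make your step (b) rigorous you would in practice end up reproducing the mollify-then-truncate construction and either a weak-convergence argument or a careful domination estimate---at which point the two proofs essentially coincide. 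Step (c) matches the paper exactly.
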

\begin{proof}
The proof follows closely the arguments in \cite{klim:rozk:13} and \cite{pard:97} (see also \cite{carb:ferr:sant:07} or \cite{xia:00} for the Lipschitz case). Therefore we only sketch it.

\begin{itemize}
\item {\bf Step 1:} we assume that $f$ is Lipschitz with w.r.t.\ $y$: there exists a constant $K'$ such that for all $(t,y,y',\psi)$
\begin{equation} \label{eq:lip_assump}
|f(t,y,z,\psi) - f(t,y',z,\psi)| \leq K' |y-y'|.
\end{equation}
Moreover $\xi$ and $f(t,0,0,0)$ satisfy the condition \eqref{eq:bounded_assump}.

Under these assumptions, for $(U,V,\phi,N)$ in $\cE^2(0,T)$, we define the following processes $(Y,Z,\psi,M)$ as follows:
$$Y_t = \E \left[ \xi+ \int_0^T f(s,U_s,V_s,\phi_s) ds \bigg| \F_t \right] - \int_0^t f(s,U_s,V_s,\phi_s) ds,$$
and the local martingale
$$ \E \left[ \xi+ \int_0^T f(s,U_s,V_s,\phi_s) ds \bigg| \F_t \right] - Y_0$$
can be decomposed in three parts (see Lemma \ref{lem:decomp_loc_mart}):
$$ \E \left[ \xi+ \int_0^T f(s,U_s,V_s,\phi_s) ds \bigg| \F_t \right] - Y_0 =\int_0^t Z_s dW_s + \int_0^t \psi_s(u) \tpi(du,ds) + M_t$$
where $Z \in L^2_{loc}(W)$ a.s., $\psi \in G_{loc}(\pi)$ and $M \in \cM_{loc}$. From the conditions imposed on $f$ and $\xi$, it is straightforward to prove that $(Y,Z,\psi,M) \in \cE^2(0,T)$. Moreover $(Y,Z,\psi,M)$ is the unique solution of the BSDE
$$Y_t = \xi + \int_t^T f(s,U_s,V_s,\phi_s) ds-\int_t^T Z_s dW_s - \int_t^T \int_\cU \psi_s(u) \tpi(du,ds) - \int_t^T dM_s.$$
Therefore we may define the mapping $\Xi : \cE^2(0,T) \to \cE^2(0,T)$ by putting
$$\Xi((U,V,\phi,N)) = (Y,Z,\psi,M).$$
By standard arguments (see e.g. the proof of Theorem 55.1 in \cite{pard:97}) we can prove that $\Xi$ is contractive on the Banach space $(\cE^2(0,T),\|.\|_\beta)$ where
\begin{eqnarray*}
\|(Y,Z,\psi,M)\|_\beta & = & \E \left\{ \sup_{0\leq t\leq T} e^{\beta t} |Y_t|^2 + \int_0^T e^{\beta t} |Z_s|^2 ds \right. \\
 & & \qquad \left. +  \int_0^T e^{\beta t} \int_\cU |\psi_t(u)|^2 \mu(du)dt + \left[ \int_0^. e^{\beta t} dM_t \right]_T \right\},
 \end{eqnarray*}
with suitable constant $\beta > 0$. Consequently, $\Xi$ has a fixed point $(Y,Z,\psi,M) \in \cE^2(0,T)$. Therefore, $(Y,Z,\psi,M)$ is the unique solution of the BSDE \eqref{eq:gene_BSDE}.

\item {\bf Step 2:} We now show how to dispense with the assumptions \eqref{eq:lip_assump} and \eqref{eq:bounded_assump}. The main result is the following.
\begin{Lemma} \label{lem:tech_proof_existence}
Under assumptions ${\bf (H_{ex})}$ and (H4), given $(V,\phi)\in \bH^2(0,T) \times L^2_\pi(0,T)$ there exists a unique process $(Y,Z,\psi,M)$ in $\cE^2(0,T)$ such that
\begin{equation} \label{eq:gene_particular_BSDE}
Y_t = \xi + \int_t^T f(s,Y_s, V_s,\phi_s) ds -\int_t^T Z_s dW_s- \int_t^T\int_\cU \psi_s(u) \tpi(du,ds) - \int_t^T dM_s.
\end{equation}
\end{Lemma}
The process $f(s,y,V_s,\phi_s)$ will be denoted by $f(s,y)$.

First we keep the boundness condition \eqref{eq:bounded_assump} and we construct of smooth approximations $(f_n, \ n \in \N)$  of $f$ (see proof of Proposition 2.4 in \cite{pard:98}). For any $n$, $f_n$ is smooth and monotone in $y$, and thus locally Lipschitz in $y$. We cannot directly apply the Step 1 since $f_n$ is not necessarily globally Lipschitz. But we just add a truncation function $q_p$ in $f_n$:
$$f_{n,p}(t,y) = f_n (t,q_p(y)), \qquad q_p(y) = py/(|y|\vee p).$$
From the first step there exists a solution $(Y^{n,p},Z^{n,p},\psi^{n,p},M^{n,p})$ to BSDE \eqref{eq:gene_particular_BSDE} with generator $f_{n,p}$. Moreover from Lemma \ref{lem:Linfinity_estim}, the sequence $Y^{n,p}$ is bounded since assumption \eqref{eq:bounded_assump} holds and the upper bound on $Y^{n,p}$ does not depend on $p$. Thus for $p$ large enough, $Y^{n,p}$ does not depend on $p$, and is denoted $Y^n$ with the same on $(Z^{n,p},\psi^{n,p},M^{n,p}) = (Z^n,\psi^n,M^n)$. Now the sequence $f_n$ satisfies the assumptions of the lemmas \ref{lem:estim_mart_part} and \ref{lem:estim_Y_part} with constant independent of $n$. Thus the sequence $(Y^n,U^n,Z^n,\psi^n,M^n)$ is bounded:
$$\sup_{n \in \N} \E\left[  \int_0^T \left(  |Y^n_s|^2 +|U^n_s|^2+|Z^n_s|^2 + \int_\cU |\psi^n_s(u)|^2 \mu(du) \right) ds  + [ M^n ]_T \right] \leq C$$
where $U^n_t = f_n(t,Y^n_t)$.
Therefore there exists a subsequence which converges weakly to $(Y,U,Z,\psi,M)$. We still denote by $(Y^n,U^n,Z^n,\psi^n,M^n)$ this subsequence. The Brownian martingale $\int_.^T Z^n_s dW_s$ converges weakly in $L^2( \Omega \times [0,T])$ to $\int_.^T Z_s dW_s$ (see  \cite{pard:98}). The same trick can be applied on the Poisson martingale $\int_.^T \int_\cU \psi^n_s(u) \tpi(du,ds)$ and the orthogonal martingale $M^n$. Finally we identify $U_t$ and $f(t,Y_t)$ in the same way as in \cite{pard:98}.

Finally we remove the condition \eqref{eq:bounded_assump} by a truncation procedure. Once again we obtain a sequence $(Y^n,Z^n,\psi^n,M^n)$ which converges in $\cE^2(0,T)$ to the solution $(Y,Z,\psi,M)$ using Lemma \ref{lem:L2_stability} (see also proof of Proposition \ref{prop:Lp_estimates}).

\item {\bf Step 3:} Using the previous lemma, then we have a mapping $\Xi : \cE^2(0,T) \to \cE^2(0,T)$ which to $(U,V,\phi,N) \in \cE^2(0,T)$ associates the solution $(Y,Z,\psi,M) \in \cE^2(0,T)$ of BSDE \eqref{eq:gene_particular_BSDE}, and once again it is a contractive mapping with the norm $\|.\|_\beta$ with suitable $\beta$ (same computations as in the proof of Theorem 2.2 in \cite{pard:99}). Hence it has a fixed point $(Y,Z,\psi,M)$, solution of the BSDE \eqref{eq:gene_BSDE}.

\end{itemize}
\end{proof}

Now we are able to give the main result of this part.
\begin{Thm} \label{thm:exist_uniq_sol_gene_BSDE}
Under assumptions ${\bf (H_{ex})}$ and (H4), there exists a unique $L^2$-solution $(Y,Z,\psi,M)$ for the BSDE \eqref{eq:gene_BSDE}.
\end{Thm}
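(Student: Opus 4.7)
Uniqueness follows at once from Lemma \ref{lem:L2_stability}: two $L^2$-solutions with the same data $(\xi,f)$ yield $\what\xi=0$ and $\what f\equiv 0$, so the stability estimate forces equality in $\cE^2(0,T)$.

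For existence, the only gap between the hypotheses of Proposition \ref{prop:existence_growth_cond_gene} and those of the theorem is that (H2) replaces the stronger deterministic-growth condition (H2'). Hence the plan is to approximate $f$ by a sequence $(f_n)_{n\ge 1}$ of generators satisfying $(H_{ex})$ together with (H2') (for some deterministic $\vartheta_n$ that may depend on $n$), apply Proposition \ref{prop:existence_growth_cond_gene} to each $f_n$, and pass to the limit. Decomposing
$$f(t,y,z,\psi)=\bigl[f(t,y,z,\psi)-f(t,y,0,0)\bigr]+g(t,y)+f(t,0,0,0),$$
where $g(t,y)=f(t,y,0,0)-f(t,0,0,0)$ is continuous and monotone in $y$ with $g(t,0)=0$ by (H1) and $\sup_{|y|\le r}|g(t,y)|\in L^1(\Omega\times[0,T])$ by (H2), I would build $f_n$ by replacing $g$ by a progressively measurable cutoff $g_n$ that preserves the monotonicity in $y$ but gains a deterministic bound $|g_n(t,y)|\le \vartheta_n(|y|)$, and set $f_n(t,y,z,\psi)=[f(t,y,z,\psi)-f(t,y,0,0)]+g_n(t,y)+f(t,0,0,0)$. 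Since $f_n(\cdot,0,0,0)=f(\cdot,0,0,0)$ and the Lipschitz constant in $(z,\psi)$ is unchanged, (H3) and (H4) are preserved uniformly in $n$.

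Proposition \ref{prop:existence_growth_cond_gene} then provides, for each $n$, a unique $L^2$-solution $(Y^n,Z^n,\psi^n,M^n)\in\cE^2(0,T)$ of BSDE \eqref{eq:gene_BSDE} with generator $f_n$. Lemmas \ref{lem:estim_mart_part} and \ref{lem:estim_Y_part} give the uniform bound
$$\sup_{n}\E\Bigl(\sup_{t\in[0,T]}|Y^n_t|^2+\int_0^T|Z^n_s|^2\,ds+\int_0^T\int_\cU|\psi^n_s(u)|^2\mu(du)ds+[M^n]_T\Bigr)\le C\,\E\Bigl(|\xi|^2+\int_0^T|f(s,0,0,0)|^2 ds\Bigr),$$
while the stability Lemma \ref{lem:L2_stability} applied to the pair $(f_n,f_m)$ yields
$$\|(Y^n,Z^n,\psi^n,M^n)-(Y^m,Z^m,\psi^m,M^m)\|_{\cE^2}^2\le C\,\E\int_0^T|(f_n-f_m)(s,Y^m_s,Z^m_s,\psi^m_s)|^2 ds.$$
The right-hand side converges to zero as $n,m\to\infty$ by dominated convergence, using the pointwise convergence $g_n\to g$, the uniform $\cE^2$-bound on $(Y^m,Z^m,\psi^m)$ and the $L^1$-integrability supplied by (H2). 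Hence the sequence is Cauchy in $\cE^2(0,T)$ and, passing to the limit in the BSDE using (H1) and (H3), its limit $(Y,Z,\psi,M)$ is the desired $L^2$-solution of BSDE \eqref{eq:gene_BSDE} with data $(\xi,f)$.

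The main technical obstacle is the construction of $g_n$: it must simultaneously preserve the monotonicity of $g$ as an $\R^d\to\R^d$ map (a condition not always preserved by naive componentwise truncation when $d\ge 2$), satisfy a pointwise bound of the form (H2'), and converge to $g$ strongly enough for the dominated-convergence argument in the Cauchy estimate to go through. This is precisely where the $L^1$-integrability of $\sup_{|y|\le r}|g(t,y)|$ provided by (H2) plays its role.
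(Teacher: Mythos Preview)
Your overall plan---approximate $f$ by generators $f_n$ satisfying (H2'), apply Proposition~\ref{prop:existence_growth_cond_gene}, then show the sequence is Cauchy in $\cE^2$---is the paper's plan too. But the Cauchy step as you present it has a genuine gap.

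You invoke Lemma~\ref{lem:L2_stability}, whose right-hand side is $\E\int_0^T |f_n-f_m|^2(s,Y^m_s,Z^m_s,\psi^m_s)\,ds$. With $f_n-f_m=g_n-g_m$ this is an $L^2$ quantity in $g$, yet (H2) supplies only $L^1$ control: $\rho_r(\cdot)=\sup_{|y|\le r}|g(\cdot,y)|\in L^1(\Omega\times[0,T])$, not $L^2$. Even if $Y^m$ were bounded by $r$, the dominant $\rho_r(s)^2$ need not be integrable, so your dominated-convergence argument fails. The uniform $\cE^2$ bound on $Y^m$ helps even less, since $Y^m$ is then not known to be bounded in $L^\infty$.

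The paper closes this gap by an extra reduction: first take $\xi$ and $f(\cdot,0,0,0)$ bounded (the general case follows afterwards by Lemma~\ref{lem:L2_stability} and truncation, where no issue arises because the difference is in $\xi$ and $f(\cdot,0,0,0)$, which are in $L^2$). With bounded data, Lemma~\ref{lem:Linfinity_estim} gives a uniform $L^\infty$ bound $\|Y^n\|_\infty\le r$ for a fixed $r$ independent of $n$. This is exploited twice. First, the approximation is the explicit rescaling
\[
f_n(t,y,z,\psi)=\bigl[f(t,y,q_n(z),q_n(\psi))-f(t,0,0,0)\bigr]\frac{n}{\rho_{r+1}(t)\vee n}+f(t,0,0,0),
\]
together with an outer cutoff $\theta_r(y)$; multiplication by a nonnegative scalar preserves monotonicity and yields $|f_n(t,y,0,0)-f(t,0,0,0)|\le n$ on $|y|\le r+1$, solving your ``main technical obstacle''. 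Second, and crucially, the Cauchy estimate is not obtained from Lemma~\ref{lem:L2_stability} but from a direct It\^o computation on $e^{4K^2 t}|Y^{n+i}-Y^n|^2$ using $\|Y^{n+i}-Y^n\|_\infty\le 2r$: the generator difference enters \emph{linearly},
\[
\E\bigl(\sup_t|U_t|^2+\cdots\bigr)\le Cr\,\E\int_0^T |f_{n+i}-f_n|(s,Y^n_s,Z^n_s,\psi^n_s)\,ds,
\]
and this $L^1$ integral is controlled by $\rho_{r+1}\in L^1$ and the uniform $L^2$ bounds on $(Z^n,\psi^n)$. In short, your scheme is right but needs the detour through bounded data and an $L^1$ (not $L^2$) Cauchy estimate; without it the passage from (H2) to convergence does not go through.
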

\begin{proof}
In Proposition \ref{prop:existence_growth_cond_gene} the condition (H2) was replaced by (H2'). To obtain the above result we follow the arguments of the proof of Theorem 4.2 in \cite{bria:dely:hu:03} where $f$ is approximated by a sequence of functions $f_n$ satisfying (H2') (and the other conditions). Indeed we first assume that $\xi$ and $f(t,0,0,0)$ are bounded. We can construct two sequences $f_n$ and $h_n$ satisfying ${\bf (H_{ex})}$ as in \cite{bria:dely:hu:03}. To be more precise, let 
\begin{itemize}
\item $\theta_r$ be a smooth function such that $0\leq \theta_r \leq 1$, $\theta_r(y) =1$ if $|y|\leq r$ and $\theta_r(y) = 0$ as soon as $|y|\geq r+1$;
\item $\rho_r(t) = \sup_{|y|\leq r} |f(t,y,0,0)-f(t,0,0,0)|$;
\item for each $n\in \N^*$, $q_n(z) = zn/(|z|\vee n)$.
\end{itemize}
Then
\begin{eqnarray*}
f_n(t,y,z,\psi) & =& \left[ f(t,y,q_n(z),q_n(\psi))-f(t,0,0,0)\right] \frac{n}{\rho_{r+1}(t)\vee n} + f(t,0,0,0),\\
h_n(t,y,z,\psi) &=& \theta_r(y) \left[ f(t,y,q_n(z),q_n(\psi))-f(t,0,0,0)\right] \frac{n}{\rho_{r+1}(t)\vee n} + f(t,0,0,0).
\end{eqnarray*}
Note that we also truncate the part on $\psi$ in $f_n$ and $h_n$ truncates $f_n$ for $|y| \geq r+1$. 

For fixed $r$ and $n$, $h_n$ satisfies the conditions of Proposition \ref{prop:existence_growth_cond_gene}. Hence there exists a unique solution $(Y^n,Z^n,\psi^n,M^n)$ in $\cE^2(0,T)$ with generator $h_n$ and from Lemma \ref{lem:Linfinity_estim}, $Y^n$ satisfies the inequality $\|Y^n\|_{\infty} \leq r$. Lemma \ref{lem:estim_mart_part} shows that
\begin{equation} \label{eq:L2_estim_gene_case}
\E\left(  \int_0^T |Z^n_s|^2 ds + \int_0^T \int_\cU | \psi^n_s(u)|^2 \mu(du) ds + [M^n ]_T \right) \leq r'.
\end{equation}
Therefore if we have chosen $r$ large enough, $(Y^n,Z^n,\psi^n,M^m)$ is solution of the BSDE \eqref{eq:gene_BSDE} with generator $f_n$ satisfying ${\bf (H_{ex})}$.
By It\^o's formula on $U = Y^{n+i}-Y^n$, $V=Z^{n+i}-Z^n$, $\phi = \psi^{n+i} - \psi^n$, $N =M^{n+i} - M^n$:
\begin{eqnarray*}
&& e^{4K^2t} |U_t|^2  + \frac{1}{2} \int_{t}^{T} e^{4K^2s} |V_s|^2 ds + \frac{1}{2} \int_{t}^{T} \int_\cU e^{4K^2s} |\phi_s(u)|^2 \mu(du) ds +  \int_t^T e^{4K^2s} d[ N]_s \\
&& \leq 2 \int_{t}^{T} e^{4K^2s} U_s (f_{n+i}(s,Y^{n}_s,Z^{n}_s,\psi^{n}_s)-f_{n}(s,Y^{n}_s,Z^{n}_s,\psi^{n}_s)) ds - 2 \int_{t}^{T} e^{4K^2s} U_{s} V_s dW_s\\
&&\qquad  -2 \int_{t}^{T} e^{4K^2s}  U_{s^-} dN_s  -\int_{t}^{T}e^{4K^2s}  \int_\cU \left( |U_{s^-} +\phi_s(u) |^2 - |U_{s^-}|^2 \right) \tpi(du,ds) \\
&& \leq 4r \int_{t}^{T} e^{4K^2s} |f_{n+i}(s,Y^{n}_s,Z^{n}_s,\psi^{n}_s)-f_{n}(s,Y^{n}_s,Z^{n}_s,\psi^{n}_s)| ds - 2 \int_{t}^{T} e^{4K^2s} U_{s} V_s dW_s\\
&&\qquad  -2 \int_{t}^{T} e^{4K^2s}  U_{s^-} dN_s  -\int_{t}^{T}e^{4K^2s}  \int_\cU \left( |U_{s^-} +\phi_s(u) |^2 - |U_{s^-}|^2 \right) \tpi(du,ds)
\end{eqnarray*}
since $\|U\|_\infty \leq 2r$. Using the Burkholder-Davis-Gundy (BDG) inequality we get for some constant depending on $K$ and $T$:
\begin{eqnarray*}
&& \E \left(   \sup_{t\in [0,T]} |U_t|^2  +\int_{0}^{T}  |V_s|^2 ds +  \int_{0}^{T} \int_\cU |\phi_s(u)|^2 \mu(du) ds + [ N]_T \right) \\
&& \qquad \leq Cr \E  \int_{0}^{T}  |f_{n+i}(s,Y^{n}_s,Z^{n}_s,\psi^{n}_s)-f_{n}(s,Y^{n}_s,Z^{n}_s,\psi^{n}_s)| ds. 
\end{eqnarray*}
Since $\|Y^n\|_\infty \leq r$, from the definition of $f_n$ we have
\begin{eqnarray*}
&& |f_{n+i}(s,Y^{n}_s,Z^{n}_s,\psi^{n}_s)-f_{n}(s,Y^{n}_s,Z^{n}_s,\psi^{n}_s)| \\
&&\qquad  \leq 2K |Z^n_s| \1_{|Z^n_s| > n} +2K \|\psi^n_s\|_{L^2} \1_{\|\psi^n_s\|_{L^2} > n}+2K |Z^n_s| \1_{\rho_{r+1}(s) > n} \\
&& \qquad \qquad +2K \|\psi^n_s\|_{L^2} \1_{\rho_{r+1}(s) > n} + 2\rho_{r+1}(s) \1_{\rho_{r+1}(s) > n}  .
\end{eqnarray*}
Since $\rho_{r+1} \in L^1(\Omega \times [0,T], \P \otimes m)$ (Assumption (H2)), and $(Z^n,\psi^n) \in \bH^2(0,T) \times L^2_\mu$ uniformly w.r.t. $n$ (Inequality \eqref{eq:L2_estim_gene_case}), this implies that $(Y^n,Z^n,\psi^n,M^m)$ is a Cauchy sequence in $\cE^2(0,T)$.

The general case will be obtained by a truncation procedure on $\xi$ and $f(t,0,0,0)$ and the inequality of Lemma \ref{lem:L2_stability}.
\end{proof}

\section{Existence in $L^p$}

The following proposition was proved in the Lipschitz case without jumps in \cite{elka:peng:quen:97}, Section 5, or in \cite{bria:dely:hu:03} for the Brownian filtration, for any $p > 1$.
\begin{Prop}[$L^p$-estimates, $p\geq 2$] \label{prop:Lp_estimates}
We assume that $f$ satisfies ${\bf (H_{ex})}$. For $p\geq 2$, if we have
\begin{equation} \tag{H5}
 \E \left(|\xi|^p +  \int_0^T |f(t,0,0,0)|^p dt \right) < +\infty,
 \end{equation}
then the solution $(Y,Z,\psi,M)$ belongs to $\cE^p(0,T)$. Moreover there exists a constant $C$ depending only on $K^2$, $p$ and $T$ such that
\begin{eqnarray}\nonumber
&& \E\left[ \sup_{t\in[0,T]} |Y_t|^p +  \left( \int_0^T |Z_s|^2 ds \right)^{p/2} + \left( \int_0^T \int_\cU |\psi_s(u)|^2 \mu(du) ds \right)^{p/2} + [M]_T^{p/2} \right] \\ \label{eq:Lp_estim_sol_p_geq 2}
&& \qquad\qquad  \leq C \E \left( | \xi|^{p} + \int_0^T |f(s,0,0,0)|^p ds \right). 
\end{eqnarray}
\end{Prop}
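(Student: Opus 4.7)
The plan is to start from the $L^2$-solution $(Y,Z,\psi,M) \in \cE^2(0,T)$ provided by Theorem \ref{thm:exist_uniq_sol_gene_BSDE} (which is applicable since (H5) implies (H4) whenever $p\geq 2$) and to upgrade its regularity via a priori $L^p$-estimates that simultaneously yield membership in $\cE^p(0,T)$ and the bound \eqref{eq:Lp_estim_sol_p_geq 2}. Since $p\geq 2$, the function $x\mapsto |x|^p$ is of class $C^2$ on $\R^d$, so It\^o's formula applies directly to $|Y_t|^p$. For integrability I would first localise with stopping times $\tau_n = \inf\{t : |Y_t| + \int_0^t|Z_s|^2 ds + \int_0^t\|\psi_s\|^2_{\bL^2_\mu} ds + [M]_t \geq n\}\wedge T$ so that every stochastic integral appearing below is a true martingale, then pass to the limit $n\to\infty$ via Fatou together with the $\cE^2$-estimates of Lemmas \ref{lem:estim_mart_part}--\ref{lem:estim_Y_part}.

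Expanding $|Y_{t\wedge\tau_n}|^p$ by It\^o produces four kinds of contributions: (i) a drift $p\int|Y_s|^{p-2}\langle Y_s, f(s,Y_s,Z_s,\psi_s)\rangle ds$; (ii) second-order continuous terms $\tfrac{p(p-1)}{2}\int|Y_s|^{p-2}|Z_s|^2 ds + \tfrac{p(p-1)}{2}\int|Y_{s-}|^{p-2}d[M^c]_s$; (iii) the jump correction $\sum_{s\leq \tau_n}(|Y_s|^p - |Y_{s-}|^p - p|Y_{s-}|^{p-2}\langle Y_{s-},\Delta Y_s\rangle)$, which for $p\geq 2$ is bounded below by $c_p|Y_{s-}|^{p-2}|\Delta Y_s|^2$ by the standard convexity inequality; and (iv) three zero-mean stochastic integrals against $W$, $\tpi$ and $M$. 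Using the monotonicity with $\alpha=0$ and (H3) one obtains
\[
\langle Y_s, f(s,Y_s,Z_s,\psi_s)\rangle \leq |Y_s|\bigl(|f(s,0,0,0)| + K|Z_s| + K\|\psi_s\|_{\bL^2_\mu}\bigr),
\]
and Young's inequality in the form $|Y|^{p-1}|Z| \leq \epsilon|Y|^{p-2}|Z|^2 + C_\epsilon|Y|^p$ (with analogous splittings for $\|\psi\|_{\bL^2_\mu}$ and $|f(s,0,0,0)|$) lets me absorb the $|Y|^{p-2}|Z|^2$ and $|Y|^{p-2}\|\psi\|^2$ contributions into the coercive terms of (ii)--(iii) for sufficiently small $\epsilon$. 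Taking expectations and applying Gronwall then yields the pointwise bound $\E|Y_t|^p \leq C\,\E(|\xi|^p + \int_0^T|f(s,0,0,0)|^p ds)$.

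For the supremum estimate and the $L^{p/2}$-bounds on $\bigl(\int|Z|^2\bigr)^{p/2}$, $\bigl(\int\|\psi\|_{\bL^2_\mu}^2\bigr)^{p/2}$ and $[M]_T^{p/2}$, I would revisit the same It\^o identity, retain the coercive terms on the left, and apply the Burkholder--Davis--Gundy inequality to each of the three martingale integrals. A typical BDG step reads
\[
\E\sup_{t\leq T}\Bigl|\int_0^t p|Y_{s-}|^{p-2}\langle Y_{s-}, Z_s\,dW_s\rangle\Bigr| \leq c\,\E\Bigl[(Y^*)^{p/2}\Bigl(\int_0^T|Y_s|^{p-2}|Z_s|^2 ds\Bigr)^{1/2}\Bigr],
\]
after which $2ab \leq \eta a^2 + \eta^{-1}b^2$ with small $\eta$ moves $\eta(Y^*)^p$ to the left and leaves a constant multiple of $\int|Y|^{p-2}|Z|^2 ds$ on the right, itself controlled by the already-bounded $\E|Y_t|^p$ via one last Young. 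The same BDG/Young recipe applied to the $\tpi$- and $M$-integrals closes the estimate and delivers \eqref{eq:Lp_estim_sol_p_geq 2}.

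The main technical hurdle is the bookkeeping of the jump term in (iii): the sharp convexity inequality $|y+h|^p - |y|^p - p|y|^{p-2}\langle y,h\rangle \geq c_p|y|^{p-2}|h|^2$ is what forces the Poisson and $M$-jump corrections (after compensation) to dominate $|Y|^{p-2}\|\psi\|^2 d\mu\,ds$ and $|Y|^{p-2}d[M]$, which is crucial for the Young absorption above to make sense; the BDG step must then be combined with Young in such a way that the running supremum $(Y^*)^p$ is genuinely absorbed on the left. The hypothesis $p\geq 2$ is used essentially at both places, namely for the $C^2$-regularity of $|x|^p$ at the origin and for the sign and size of the jump correction; this is precisely why the case $1<p<2$ demands the different analysis taken up in the next section.
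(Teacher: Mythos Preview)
Your plan correctly identifies the main engine (It\^o on $|Y|^p$, convexity of the jump term, Young absorption, Gronwall for the pointwise bound, then BDG for the supremum), and this part matches the paper's Step~1 in spirit. There is, however, a genuine gap in how you propose to obtain the $L^{p/2}$-bounds on $\bigl(\int_0^T|Z_s|^2 ds\bigr)^{p/2}$, $\bigl(\int_0^T\|\psi_s\|_{\bL^2_\mu}^2 ds\bigr)^{p/2}$ and $[M]_T^{p/2}$. The coercive terms produced by It\^o on $|Y|^p$ are of the form $\int_0^T|Y_s|^{p-2}|Z_s|^2 ds$, $\int_0^T|Y_s|^{p-2}\|\psi_s\|_{\bL^2_\mu}^2 ds$ and $\int_0^T|Y_{s-}|^{p-2}d[M]_s$, and for $p\geq 2$ these do \emph{not} dominate the quantities in the statement (the inequality $|Y_s|^{p-2}\leq (Y^*)^{p-2}$ goes the wrong way). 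Your sentence ``retain the coercive terms on the left'' followed by BDG on the martingale integrals would only reproduce control of $\int|Y|^{p-2}|Z|^2$ etc., not of $\bigl(\int|Z|^2\bigr)^{p/2}$.

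The paper closes this gap by a separate argument (its Step~2): apply It\^o to $|Y|^2$ rather than $|Y|^p$, derive the pathwise inequality $\int_0^{\tau_k}|Z|^2 + \int_0^{\tau_k}\int_\cU|\psi|^2\pi(du,ds) + [M]_{\tau_k}\leq C|Y_*|^2 + \ldots -\text{(martingales)}$, raise both sides to the power $p/2$, and only then take expectations and apply BDG (legitimate because $p/2\geq 1$). A second ingredient you do not mention, and which is essential here, is the comparison between the predictable and optional brackets of the Poisson integral, namely
\[
\E\Bigl(\int_0^{\tau_k}\int_\cU|\psi_s(u)|^2\mu(du)\,ds\Bigr)^{p/2}\leq e_p\,\E\Bigl(\int_0^{\tau_k}\int_\cU|\psi_s(u)|^2\,\pi(du,ds)\Bigr)^{p/2},
\]
valid for $p\geq 2$ (Inequality \eqref{eq:pred_quad_var}); without it the $\|\psi\|^2_{\bL^2_\mu}$ term generated by the Lipschitz bound on $f$ cannot be absorbed after raising to $p/2$. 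Finally, note that the paper chooses to run the whole argument on the differences $Y^m-Y^n$ of the bounded approximants built in the proof of Theorem~\ref{thm:exist_uniq_sol_gene_BSDE}; this guarantees that all local martingales are true martingales without localisation and simultaneously shows that the approximating sequence is Cauchy in $\cE^p(0,T)$, hence that the limit actually lies in $\cE^p(0,T)$. Your localisation-on-the-solution route is viable but requires extra care in passing to the limit, which you do not address.
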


\begin{proof}
Under this condition (H5) on $\xi$ and $f(t,0,0,0)$, we know that there exists a unique solution $(Y,Z,\psi,M)$ which belongs to $\cE^2(0,T)$. We want to show that $(Y,Z,\psi,M)$ in fact belongs to $\cE^p(0,T)$.

From the proof of Theorem \ref{thm:exist_uniq_sol_gene_BSDE} (or Proposition \ref{prop:existence_growth_cond_gene}), the solution $(Y,Z,\psi,M)$ is obtained as the limit of a sequence $(Y^n,Z^n,\psi^n,M^n)$, solution of BSDE \eqref{eq:gene_BSDE} but with bounded coefficients $\xi^n$  and $f^n(t,0,0,0)$. We prove that convergence also holds in $\cE^p(0,T)$ by proving the counterpart of Lemma \ref{lem:L2_stability} in $\cE^p$. For any $(m,n)\in \N^2$ we denote
$$(\what Y,\what Z, \what \psi, \what M, \what \xi, \what f) = (Y^m-Y^n,Z^m-Z^n,\psi^m-\psi^n, M^m-M^n, \xi^m - \xi^n, f^m-f^n).$$

\noindent {\bf Step 1:} we prove that the sequence $Y^n$ converges in $\bD^p(0,T)$ to $Y$. Since $p \geq 2$ we can apply It\^o formula with the $C^2$-function $\theta(y) = |y|^p$ to the process $\what Y$. Note that
$$\frac{\partial \theta}{\partial y_i} (y)= p y_i |y|^{p-2}, \frac{\partial^2 \theta}{\partial y_i \partial y_j} (y)= p |y|^{p-2} \delta_{i,j} + p(p-2)y_iy_j|y|^{p-4}$$
where $\delta_{i,j}$ is the Kronecker delta. Therefore for every  $0\leq t \leq T$ we have:
\begin{eqnarray} \nonumber
&& |\what Y_{t}|^p = |\what \xi|^p+ \int_{t}^{T} p \what Y_s |\what Y_s|^{p-2} (f^m(s,Y^m_s,Z^m_s,\psi^m_s) - f^n(s,Y^n_s,Z^n_s,\psi^n_s)) ds \\  \nonumber
& &\quad  -  p  \int_{t}^{T} \what Y_{s^-} |\what Y_{s^-}|^{p-2} d\what M_s  -  p  \int_{t}^{T} \what Y_{s^-} |\what Y_{s^-}|^{p-2} \what Z_s dW_s    \\ \nonumber
& &\quad  - p \int_{t}^{T} \int_\cU \left( \what  Y_{s^-} |\what Y_{s^-}|^{p-2} \what \psi_s(u) \right) \tpi(du,ds) - \frac{1}{2}\int_t^T \trace \left( D^2\theta(\what Y_s) \what Z_s \what Z_s^* \right) ds \\ \nonumber
& &\quad - \int_{t}^{T}  \int_\cU \left( |\what Y_{s^-} + \what \psi_s(u) |^p - |\what Y_{s^-}|^p - p\what  Y_{s^-} |\what Y_{s^-}|^{p-2} \what \psi_s(u) \right) \pi(du,ds) \\  \nonumber
&& \quad - \frac{1}{2} \int_t^T \sum_{1\leq i,j\leq d}  \frac{\partial^2 \theta}{\partial y_i \partial y_j} (\what Y_s) d[ \what M^i, \what M^j ]^c_t \\ \label{eq:Ito_formula_3}
&& \quad - \sum_{t< s \leq T} \left( |\what Y_{s^-} + \Delta \what M_s |^p - |\what Y_{s^-}|^p - p\what  Y_{s^-} |\what Y_{s^-}|^{p-2}\Delta \what M_s \right).
\end{eqnarray}
The notation $[ M ]^c$ denotes the continuous part of the bracket process $[ M ]$. First remark that for a non negative symmetric matrix $\Gamma \in \R^{d\times d}$
\begin{equation*}
\sum_{1\leq i,j \leq d} D^2\theta(y)_{i,j} \Gamma_{i,j} = p |y|^{p-2} \trace(\Gamma) + p(p-2) |y|^{p-4} (y^*)\Gamma y \geq p |y|^{p-2} \trace(\Gamma) ,
\end{equation*}
and thus
$$\trace (D^2\theta(y) z z^*)   \geq p |y|^{p-2} |z|^2.$$
Moreover using Taylor formula (and Lemma A.4 in \cite{yao:10} for the last inequality) we have
\begin{eqnarray*}
&& \theta(x+y) - \theta(x) - \nabla \theta (x) y = \int_0^1 y D^2\theta(x+r y) y (1-r) dr \\
&&\qquad = p|y|^2 \int_0^1(1-r) |x+ry|^{p-2} dr + p(p-2) \int_0^1 (y(x+r y))^2 |x+r y|^{p-4}(1-r) dr \\
&& \qquad \geq p|y|^2 \int_0^1(1-r) |x+ry|^{p-2} dr \geq  p(p-1) 3^{1-p} |y|^2|x|^{p-2}.
\end{eqnarray*}
Therefore we deduce that
\begin{eqnarray*}
&& \frac{1}{2} \int_t^T \sum_{1\leq i,j\leq d}  \frac{\partial^2 \theta}{\partial y_i \partial y_j} (\what Y_s) d[ \what M^i,  \what M^j ]^c_t  + \sum_{t< s \leq T} \left( |\what Y_{s^-} + \Delta \what M_s |^p - |\what Y_{s^-}|^p - p\what  Y_{s^-} |\what Y_{s^-}|^{p-2}\Delta \what M_s(z) \right) \\
&& \qquad \geq \frac{p}{2}\int_t^T  | \what Y_s|^{p-2} d[ \what M]^c_s +  p(p-1) 3^{1-p} \sum_{t< s \leq T} | \what Y_s|^{p-2} |\Delta \what M_s|^2 \geq \kappa_p \int_t^T  | \what Y_s|^{p-2} d[  \what M]_s
\end{eqnarray*}
where $\kappa_p = \min (p/2,p(p-1) 3^{1-p} ) > 0$. Now the Poisson part in \eqref{eq:Ito_formula_3} can be written as follows:
\begin{eqnarray*}
& &- p \int_t^T \int_\cU \left( \what  Y_{s^-} |\what Y_{s^-}|^{p-2} \what \psi_s(u) \right) \tpi(du,ds)  \\ \nonumber
& &\qquad - \int_t^T   \int_\cU \left( |\what Y_{s^-} + \what \psi_s(u) |^p - |\what Y_{s^-}|^p - p\what  Y_{s^-} |\what Y_{s^-}|^{p-2} \what \psi_s(u) \right) \pi(du,ds) \\  \nonumber
& &\ = - \int_t^T  \int_\cU \left( |\what Y_{s^-} + \what \psi_s(u) |^p - |\what Y_{s^-}|^p - p\what  Y_{s^-} |\what Y_{s^-}|^{p-2} \what \psi_s(u) \right) \mu(du) ds \\
& &\qquad - \int_t^T \int_\cU \left( |\what Y_{s^-} + \what \psi_s(u) |^p - |\what Y_{s^-}|^p \right) \tpi(du,ds) \\
&& \leq -  p(p-1) 3^{1-p} \int_t^T  |\what Y_{s^-}  |^{p-2}  \|\what \psi_s\|_{L^2_\mu}^2 ds  - \int_t^T \int_\cU \left( |\what Y_{s^-} + \what \psi_s(u) |^p - |\what Y_{s^-}|^p \right) \tpi(du,ds) .
\end{eqnarray*}
Then \eqref{eq:Ito_formula_3} becomes
\begin{eqnarray*}
&& |\what Y_{t}|^p +  \kappa_p \int_t^T |\what Y_{s}|^{p-2} | \what Z_s|^2 ds + \kappa_p \int_t^T |\what Y_{s^-}|^{p-2} d[ \what M ]_s + \kappa_p\int_t^T  |\what Y_{s^-}  |^{p-2}  \|\what \psi_s\|_{L^2_\mu}^2 ds \\
&&\quad \leq  |\what \xi|^p  + \int_{t}^{T} p \what Y_s |\what Y_s|^{p-2} (f^m(s,Y^m_s,Z^m_s,\psi^m_s) - f^n(s,Y^n_s,Z^n_s,\psi^n_s)) ds \\
& &\quad  -  p  \int_{t}^{T} \what Y_{s^-} |\what Y_{s^-}|^{p-2} d\what M_s  -  p  \int_{t}^{T} \what Y_{s^-} |\what Y_{s^-}|^{p-2} \what Z_s dW_s    \\
&& \quad  - \int_t^T \int_\cU \left( |\what Y_{s^-} + \what \psi_s(u) |^p - |\what Y_{s^-}|^p \right) \tpi(du,ds).
\end{eqnarray*}
From the assumptions on $f^m$, we still have \eqref{eq:young_growth_f} and we choose $\eps = \frac{\kappa_p}{p}$. We obtain
\begin{eqnarray*}
&& |\what Y_{t}|^p + \frac{ \kappa_p}{2} \int_t^T |\what Y_{s}|^{p-2} | \what Z_s|^2 ds+  \frac{\kappa_p}{2} \int_t^T |\what Y_{s^-}|^{p-2} d[\what M ]_s + \frac{\kappa_p}{2} \int_t^T  |\what Y_{s^-}  |^{p-2}  \|\what \psi_s\|_{L^2_\mu}^2 ds \\
&&\quad \leq  |\what \xi|^p  + \frac{p^2}{2\kappa_p}(2K^2+1)\int_t^T  |\what Y_s|^{p} ds  +\frac{\kappa_p}{2} \int_t^T|\what Y_s|^{p-2} |\what f(s,Y^n_s,Z^n_s,\psi^n_s)|^2 ds \\
& &\quad  -  p  \int_{t}^{T} \what Y_{s^-} |\what Y_{s^-}|^{p-2} d\what M_s  -  p  \int_{t}^{T} \what Y_{s^-} |\what Y_{s^-}|^{p-2} \what Z_s dW_s    \\
&& \quad  - \int_t^T \int_\cU \left( |\what Y_{s^-} + \what \psi_s(u) |^p - |\what Y_{s^-}|^p \right) \tpi(du,ds).
\end{eqnarray*}
Using Young's inequality, we finally have
\begin{eqnarray} \nonumber
&& |\what Y_{t}|^p + \frac{ \kappa_p}{2} \int_t^T |\what Y_{s}|^{p-2} | \what Z_s|^2 ds+  \frac{\kappa_p}{2} \int_t^T |\what Y_{s^-}|^{p-2} d[ \what M ]_s + \frac{\kappa_p}{2} \int_t^T  |\what Y_{s^-}  |^{p-2}  \|\what \psi_s\|_{L^2_\mu}^2 ds \\ \nonumber
&&\quad \leq  |\what \xi|^p  +\left[  \frac{p^2}{2\kappa_p}(2K^2+1) + \frac{\kappa_p(p-2)}{2p}\right] \int_t^T |\what Y_s|^{p} ds  +\frac{\kappa_p }{p} \int_t^T   |\what f(s,Y^n_s,Z^n_s,\psi^n_s)|^p ds \\ \nonumber
& &\quad  -  p  \int_{t}^{T} \what Y_{s^-} |\what Y_{s^-}|^{p-2} d\what M_s  -  p  \int_{t}^{T} \what Y_{s^-} |\what Y_{s^-}|^{p-2} \what Z_s dW_s    \\  \label{eq:Lp_estimate_1}
&& \quad  - \int_t^T \int_\cU \left( |\what Y_{s^-} + \what \psi_s(u) |^p - |\what Y_{s^-}|^p \right) \tpi(du,ds).
\end{eqnarray}
Note that the three local martingales in the previous inequality are true martingales. Indeed since $Y^m$ and $Y^n$ are in $\bD^\infty(0,T)$ and $M^m$ and $M^n$ are in $\bM^2(0,T)$, the local martingale
$$ \int_{0}^{.} \what Y_{s^-} |\what Y_{s^-}|^{p-2} d\what M_s$$
is a true martingale and we can apply the Burkholder-Davis-Gundy inequality to obtain
\begin{eqnarray} \nonumber
&& \E \sup_{t\in [0,T]} \left|  \int_{0}^{t} \what Y_{s^-} |\what Y_{s^-}|^{p-2} d\what M_s \right|  \leq c_p \E \left(   \int_{0}^{T} |\what Y_{s^-}|^{2p-2} d[ \what M ]_s \right)^{1/2} \\ \label{eq:Lp_estimate_2}
&& \qquad \qquad \leq \frac{1}{4p} \E \left( \sup_{t \in [0,T]}  |\what Y_{t}|^{p}  \right) + p c_p^2 \E  \int_{0}^{T} |\what Y_{s^-}|^{p-2} d[ \what M ]_s.
\end{eqnarray}
By the same arguments we have
\begin{eqnarray} \nonumber
&& \E \sup_{t\in [0,T]} \left|  \int_{0}^{t} \what Y_{s^-} |\what Y_{s^-}|^{p-2} \what Z_s dW_s \right|  \leq c_p \E \left(   \int_{0}^{T} |\what Y_{s^-}|^{2p-2} | \what Z_s|^2 ds \right)^{1/2} \\  \label{eq:Lp_estimate_3}
&& \qquad \qquad \leq \frac{1}{4p} \E \left( \sup_{t \in [0,T]}  |\what Y_{t}|^{p}  \right) + p c_p^2 \E  \int_{0}^{T} |\what Y_{s}|^{p-2} |\what Z_s|^2 ds .
\end{eqnarray}
Finally the same result holds for the martingale
$$\int_{0}^{.} \int_\cU \left( \what  Y_{s^-} |\what Y_{s^-}|^{p-2} \what \psi_s(u) \right) \tpi(du,ds),$$
with
\begin{eqnarray} \nonumber
&& \E \sup_{t\in [0,T]} \left| \int_{0}^{t} \int_\cU \left( \what  Y_{s^-} |\what Y_{s^-}|^{p-2} \what \psi_s(u) \right) \tpi(du,ds) \right|  \leq c_p \E \left(   \int_{0}^{T} |\what Y_{s^-}|^{2p-2}\int_\cU |\what \psi_s(u)|^2 \pi(du,ds) \right)^{1/2} \\  \label{eq:Lp_estimate_4}
&& \qquad \qquad \leq \frac{1}{4p} \E \left( \sup_{t \in [0,T]}  |\what Y_{t}|^{p}  \right) +  pc_p^2 \E  \int_{0}^{T} |\what Y_{s^-}|^{p-2}  \|\what \psi_s\|_{L^2_\mu}^2 ds.
\end{eqnarray}

Now we come to the conclusion. Using \eqref{eq:Lp_estimate_1} we can take expectations and obtain for every $0\leq t \leq T$:
\begin{eqnarray*}
&&\E |\what Y_{t}|^p  \leq \E |\what \xi|^p  +\left[  \frac{p^2}{2\kappa_p}(2K^2+1) + \frac{\kappa_p(p-2)}{2p}\right] \E  \int_t^T |\what Y_s|^{p} ds  +\frac{\kappa_p }{p}\E  \int_t^T   |\what f(s,Y^n_s,Z^n_s,\psi^n_s)|^p ds,
\end{eqnarray*}
hence by Gronwall's lemma
\begin{eqnarray*}
&&\E |\what Y_{t}|^p  \leq C  \E \left( |\what \xi|^p+\E \int_{0}^{T}   |\what f(s,Y^n_s,Z^n_s,\psi^n_s)|^p ds\right)
\end{eqnarray*}
for some constant $C$ depending on $K$, $p$ and $T$. From this and \eqref{eq:Lp_estimate_1} again we also deduce that
\begin{eqnarray*}
&& \E \int_0^T |\what Y_{s}|^p ds + \E \int_{0}^{T} |\what Y_{s}|^{p-2} |Z_s|^2 ds  + \E \int_{0}^{T} |\what Y_{s^-}|^{p-2} d[ \what M ]_s + \E \int_{0}^{T}  |\what Y_{s^-}  |^{p-2}  \|\what \psi_s\|_{L^2_\mu}^2 ds \\
&& \qquad \qquad  \leq C  \E \left( |\what \xi|^p+\E \int_{0}^{T}   |\what f(s,Y^n_s,Z^n_s,\psi^n_s)|^p ds\right).
\end{eqnarray*}

Let us come back to \eqref{eq:Ito_formula_3} and use the convexity of the function $\theta$ and Estimate \eqref{eq:young_growth_f} with $\eps = \kappa_p/p$, to deduce that:
\begin{eqnarray} \nonumber
&& |\what Y_{t}|^p \leq |\what \xi|^p+\left[  \frac{p^2}{2\kappa_p}(2K^2+1) + \frac{\kappa_p(p-2)}{2p}\right] \int_t^T |\what Y_s|^{p} ds  +\frac{\kappa_p }{p} \int_t^T   |\what f(s,Y^n_s,Z^n_s,\psi^n_s)|^p ds \\  \nonumber
&&\quad +  \frac{ \kappa_p}{2} \int_t^T |\what Y_{s}|^{p-2} | \what Z_s|^2 ds+ \frac{\kappa_p}{2} \int_t^T  |\what Y_{s^-}  |^{p-2}  \|\what \psi_s\|_{L^2_\mu}^2 ds\\ \nonumber
& &\quad  -  p  \int_{t}^{T} \what Y_{s^-} |\what Y_{s^-}|^{p-2} d\what M_s  -  p  \int_{t}^{T} \what Y_{s^-} |\what Y_{s^-}|^{p-2} \what Z_s dW_s    \\ \nonumber
& &\quad  - p \int_{t}^{T} \int_\cU \left( \what  Y_{s^-} |\what Y_{s^-}|^{p-2} \what \psi_s(u) \right) \tpi(du,ds) .
\end{eqnarray}
Now using estimates \eqref{eq:Lp_estimate_2}, \eqref{eq:Lp_estimate_3} and \eqref{eq:Lp_estimate_4},  we get:
$$\E \left( \sup_{t \in [0,T]}  |\what Y_{t}|^{p}  \right)\leq C  \E \left( |\what \xi|^p+\E \int_{0}^{T}   |\what f(s,Y^n_s,Z^n_s,\psi^n_s)|^p ds\right).$$
Therefore the limit process $Y$ belongs to $\bD^p(0,T)$. 

\noindent {\bf Step 2:} We adopt the arguments of the proof of Lemma \ref{lem:estim_mart_part} (see also Lemma 3.1 in \cite{bria:dely:hu:03}) to prove that:
\begin{eqnarray} \nonumber 
&& \E\left[  \left( \int_0^T |\what Z_s|^2 ds \right)^{p/2} + \left( \int_0^T \int_\cU |\what \psi_s(u)|^2 \mu(du) ds \right)^{p/2} +[ \what M ]_T^{p/2} \right] \\ \label{eq:Lp_main_estim_mart_part_p_geq_2}
&& \qquad \leq C \E \left[ \sup_{t \in [0,T]} | \what Y_{t}|^{p} + \int_0^T |\what f(s,Y^n_s,Z^n_s,\psi^n_s)|^p ds \right].
\end{eqnarray}
This estimate gives the convergence of $(Z^n,\psi^n,M^n)$ in the desired integrability space. Indeed let $\tau_k \in \cT_T$ defined by:
$$\tau_k = \inf \left\{ t \in [0,T], \int_0^t |\what Z_r|^2 dr +\int_0^t \int_\cU |\what \psi_s(u)|^2 \pi(du,ds) + [ \what M ]_t \geq k \right\} \wedge 
T.$$
By It\^o's formula on $|\what Y_t|^2$: 
\begin{eqnarray*} 
&& |\what Y_{0}|^2 + \int_{0}^{\tau_k} |\what Z_s|^2 ds + \int_{0}^{\tau_k} \int_\cU |\what \psi_s(u)|^2 \pi(du,ds) + [\what  M]_{\tau_k} \\ 
&& \quad =  |\what Y_{\tau_k}|^2 + 2 \int_{0}^{\tau_k} \what Y_s \left( f^m(s,Y^m_s,Z^m_s,\psi^m_s) -  f^n(s,Y^n_s,Z^n_s,\psi^n_s) \right) ds \\
&& \qquad -2 \int_{0}^{\tau_k} \what Y_{s^-} \what Z_s dW_s  -2 \int_{0}^{\tau_k} \what Y_{s^-} d\what M_s  - 2 \int_{0}^{\tau_k} \int_\cU \what Y_{s^-} \what \psi_s(u)  \tpi(du,ds) .
\end{eqnarray*}
Once again with a straightforward modification of estimate \eqref{eq:young_growth_f}:
\begin{eqnarray*} \nonumber
&& \frac{1}{2} \int_{0}^{\tau_k} |\what Z_s|^2 ds +\int_{0}^{\tau_k} \int_\cU |\what \psi_s(u)|^2 \pi(du, ds) + [\what  M]_{\tau_k} \\  \nonumber
&& \quad \leq |\what Y_*|^2 + \frac{((1+1/\eps)K^2+1)}{2} \int_{0}^{T} |\what Y_s|^2 ds + \frac{1}{2} \int_{0}^{T} |\what f(s,Y^n_s,Z^n_s,\psi^n_s)|^2 ds   \\ \nonumber
&& \qquad + \frac{\eps}{2} \int_{0}^{\tau_k} \int_\cU |\what \psi_s(u)|^2 \mu(du) ds \\ 
&& \qquad -2 \int_{0}^{\tau_k} \what Y_{s^-} \what Z_s dW_s  -2 \int_{0}^{\tau_k} \what Y_{s^-} d\what M_s  - 2 \int_{0}^{\tau_k} \int_\cU \what Y_{s^-} \what \psi_s(u) \tpi(du,ds)
\end{eqnarray*}
where $\what Y_* =\sup_{t\in[0,T]} |\what Y_t|$. It follows that 
\begin{eqnarray}   \nonumber
&& \left( \int_{0}^{\tau_k} |\what Z_s|^2 ds \right)^{p/2}+\left(  \int_{0}^{\tau_k} \int_\cU |\what \psi_s(u)|^2 \pi(du, ds)  \right)^{p/2}+ [\what  M]^{p/2}_{\tau_k} \\   \nonumber
&& \quad \leq C_p \left[ \left( 1 + \frac{((1+1/\eps)K^2+1)T}{2} \right)^{p/2} |\what Y_*|^p  +\left(  \int_{0}^{T} |\what f(s,Y^n_s,Z^n_s,\psi^n_s)|^2 ds \right)^{p/2}\right]  \\  \nonumber
&& \qquad + C_p \eps^{p/2} \left(  \int_{0}^{\tau_k} \int_\cU |\what \psi_s(u)|^2 \mu(du) ds \right)^{p/2} + C_p \left|\int_{0}^{\tau_k} \int_\cU \what Y_{s^-} \what \psi_s(u)  \tpi(du,ds)\right|^{p/2}\\ \label{eq:Lp_estim_mart_part_p_geq_2}
&& \qquad + C_p \left[ \left| \int_{0}^{\tau_k} \what Y_{s^-} \what Z_s dW_s \right|^{p/2} + \left| \int_{0}^{\tau_k} \what Y_{s^-} d\what M_s \right|^{p/2}  \right].
\end{eqnarray}
Since $p/2 \geq 1$, we can apply the Burkholder-Davis-Gundy inequality to obtain
\begin{eqnarray*} \nonumber
&& C_p \E  \left|  \int_{0}^{\tau_k} \what Y_{s^-} d\what M_s \right|  \leq  d_p \E\left[ \left(   \int_{0}^{\tau_k} |\what Y_{s^-}|^{2}  d[ \what M]_s \right)^{p/4} \right] \leq \frac{d_p^2}{4} \E \left(  |\what Y_{*}|^{p}  \right) + \frac{1}{2}  [ \what M]_{\tau_k}^{p/2},\\
&& C_p \E \left|  \int_{0}^{\tau_k} \what Y_{s^-}  \what Z_s dW_s \right| \leq d_p \E\left[ \left(   \int_{0}^{\tau_k} |\what Y_{s^-}|^{2} | \what Z_s|^2 ds \right)^{p/4} \right]\\
&&\qquad \leq  \frac{d_p^2}{4} \E \left(  |\what Y_{*}|^{p}  \right) + \frac{1}{2} \E \left[ \left(\int_{0}^{\tau_k}  |\what Z_s|^2 ds\right)^{p/2} \right],\\
&& C_p \E\left| \int_{0}^{\tau_k} \int_\cU \what  Y_{s^-}  \what \psi_s(u) \tpi(du,ds) \right|  \leq  d_p \E \left[ \left(   \int_{0}^{\tau_k} |\what Y_{s^-}|^{2}\int_\cU |\what \psi_s(u)|^2 \pi(du,ds) \right)^{p/4}\right] \\ 
 &&\qquad \leq  \frac{d_p^2}{4} \E \left(   |\what Y_{*}|^{p}  \right) +  \frac{1}{2} \E  \left[ \left(\int_{0}^{\tau_k}   \|\what \psi_s\|_{L^2_\mu}^2 \pi(du,ds) \right)^{p/2}\right].
\end{eqnarray*}
Hence coming back to \eqref{eq:Lp_estim_mart_part_p_geq_2} and taking the expectation 
\begin{eqnarray*}  
&&\frac{1}{2} \E \left( \int_{0}^{\tau_k} |\what Z_s|^2 ds \right)^{p/2}+\frac{1}{2} \E\left(  \int_{0}^{\tau_k} \int_\cU |\what \psi_s(u)|^2 \pi(du, ds)  \right)^{p/2}+\frac{1}{2} \E [\what  M]^{p/2}_{\tau_k} \\   
&& \quad \leq C_{p,K,T,\eps} \ \E   |\what Y_*|^p  + C_p \E \left[ \left(  \int_{0}^{T} |\what f(s,Y^n_s,Z^n_s,\psi^n_s)|^2 ds \right)^{p/2}\right]  \\  \nonumber
&& \qquad + C_p \eps^{p/2}  \E \left(  \int_{0}^{\tau_k} \int_\cU |\what \psi_s(u)|^2 \mu(du) ds \right)^{p/2} .
\end{eqnarray*}
Finally we use that for some constant $e_p>0$  
\begin{equation} \label{eq:pred_quad_var}
 \E \left(  \int_{0}^{\tau_k} \int_\cU |\what \psi_s(u)|^2 \mu(du) ds \right)^{p/2} \leq e_p \E\left(  \int_{0}^{\tau_k} \int_\cU |\what \psi_s(u)|^2 \pi(du, ds)  \right)^{p/2},
 \end{equation}
(see \cite{leng:lepi:prat:80,dzha:valk:90}) and thus we can choose $\eps$ sufficiently small and depending only on $p$ such that:
\begin{eqnarray}  \nonumber
&& \E \left( \int_{0}^{\tau_k} |\what Z_s|^2 ds \right)^{p/2}+ \E\left(  \int_{0}^{\tau_k} \int_\cU |\what \psi_s(u)|^2 \mu(du) ds  \right)^{p/2}+ \E [\what  M]^{p/2}_{\tau_k} \\   \label{eq:Lp_estim_mart_part_p_geq _2}
&& \quad \leq \widetilde C_{p,K,T,\eps} \ \E   |\what Y_*|^p  + \widetilde C_p \E \left[ \left(  \int_{0}^{T} |\what f(s,Y^n_s,Z^n_s,\psi^n_s)|^2 ds \right)^{p/2}\right]   .
\end{eqnarray}
We can let $k$ go to $+\infty$ in order to have estimate \eqref{eq:Lp_main_estim_mart_part_p_geq_2}.

\noindent {\bf Step 3:} 
The inequality \eqref{eq:Lp_estim_sol_p_geq 2} can be deduced from the previous steps: we just replace $(Y^m,Z^m,\psi^m,M^m,f^m)$ by $(Y,Z,\psi,M,f)$ and $(Y^n,Z^n,\psi^n,M^n,f)$ by $(0,0,0,0,0)$. 
\end{proof}

Now we consider the case where $p\in [1,2)$. The main difference is that we cannot directly apply the It\^o formula to $\theta(y) = |y|^p$. The next result is an extension of the Meyer-It\^o formula and as mentioned in \cite{bria:dely:hu:03}, it is likely that this result already appeared somewhere. A version of this result is given in Lemma 2.2 in \cite{bria:dely:hu:03} without jumps or in Proposition 2.1 in \cite{klim:13b} in dimension one. We denote by $\check{x} = |x|^{-1} x \1_{x\neq 0}$.
\begin{Lemma} \label{lem:Ito_formula_x_p}
We consider the $\R^d$-valued semimartingale $(X_t)_{t\in [0,T]}$ defined by
$$X_t = X_0 + \int_0^t K_s ds + \int_0^t Z_s dW_s + \int_0^t \int_\cU \psi_s(u) \tpi(du,ds) + M_t,$$
such that $t\mapsto K_t$ belongs to $L^1_{loc}(0,+\infty)$ a.s., $Z \in L^2_{loc}(W)$, $\psi \in G_{loc}(\pi)$ and $M \in \cM_{loc}$.
Then for any $p\geq 1$, we have
\begin{eqnarray} \nonumber
&& |X_t|^p = |X_0|^p + \frac{1}{2} L(t) \1_{p=1} + p \int_0^t |X_s|^{p-1} \check{X}_s K_s ds  +  p \int_0^t |X_s|^{p-1} \check{X}_s Z_s dW_s  \\ \nonumber
& &\quad  +  p  \int_0^t |X_{s^-}|^{p-1} \check{X}_{s^-} dM_s +  p \int_0^t |X_s|^{p-1} \check{X}_s  \int_\cU \psi_s(u) \tpi(du,ds)   \\ \nonumber
& &\quad +  \int_{0}^{t}  \int_\cU \left[ |X_{s^-}+\psi_s(u)|^p -|X_{s^-}|^p - p|X_{s^-}|^{p-1} \check{X}_{s^-} \psi_s(u) \right] \pi(du,ds) \\  \nonumber
&& \quad +\sum_{0< s \leq t}  \left[ |X_{s^-}+\Delta M_s|^p - |X_{s^-}|^p - p|X_{s^-}|^{p-1} \check{X}_{s^-} \Delta M_s \right] \\ \nonumber
&&\quad + \frac{p}{2}\int_0^t  |X_{s}|^{p-2}  \1_{X_s\neq 0}\left\{(2-p)\left[ |Z_s|^2 - (\check{X_{s}})^* Z_s Z_s^* \check{X_{s}} \right] + (p-1)|Z_s|^2 \right\} ds \\ \label{eq:Ito_formula_x_p}
&& \quad +\frac{p}{2} \int_0^t  |X_{s}|^{p-2}  \1_{X_s\neq 0}  \left\{(2-p)\left[ d[ M ]^c_s - (\check{X_{s}})^* d[M,M]^c_s \check{X_{s}} \right] + (p-1)d[ M ]^c_s \right\} .
\end{eqnarray}
The process $ (L(t),\ t\in[0; T ])$ is continuous, nondecreasing with $L_0 = 0$ and increases only on the boundary of the random set $\{t\in[0;T]; X_{t^-}=X_t =0\}$.
\end{Lemma}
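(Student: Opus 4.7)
The plan is to smooth $\theta(x)=|x|^p$ by the $C^2$ approximation $\theta_\eps(x)=(|x|^2+\eps^2)^{p/2}$ for $\eps>0$, apply the classical It\^o formula for c\`adl\`ag semimartingales (Theorem I.4.57 in \cite{jaco:shir:03}) to $\theta_\eps(X_t)$, and let $\eps\downarrow 0$. Term-by-term identification of the limits will yield \eqref{eq:Ito_formula_x_p}.

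A direct differentiation gives
\begin{equation*}
\nabla\theta_\eps(x)=p(|x|^2+\eps^2)^{(p-2)/2}x,\qquad (D^2\theta_\eps(x))_{ij}=p(|x|^2+\eps^2)^{(p-2)/2}\delta_{ij}+p(p-2)(|x|^2+\eps^2)^{(p-4)/2}x_ix_j.
\end{equation*}
Applied to $X$, It\^o's formula decomposes $\theta_\eps(X_t)-\theta_\eps(X_0)$ into: the drift $\int_0^t\nabla\theta_\eps(X_s)K_s\,ds$; stochastic integrals against $dW_s$, $dM_s$ and $\tpi(du,ds)$ with integrand $\nabla\theta_\eps(X_{s^-})$ (the $\tpi$-integral obtained by adding and subtracting the linear term in $\psi$ inside the $\pi$-integral of $\theta_\eps(X_{s^-}+\psi_s(u))-\theta_\eps(X_{s^-})$); the continuous-quadratic-variation correction $\tfrac12\int_0^t\trace(D^2\theta_\eps(X_s)(Z_sZ_s^*\,ds+d[M]^c_s))$; a Taylor-type remainder against $\pi(du,ds)$ for the Poisson jumps; and the pointwise sum $\sum_{s\leq t}[\theta_\eps(X_{s^-}+\Delta M_s)-\theta_\eps(X_{s^-})-\nabla\theta_\eps(X_{s^-})\cdot\Delta M_s]$ for the jumps of $M$.

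Passage to the limit is straightforward for the first-derivative terms. Since $\nabla\theta_\eps(x)\to p|x|^{p-1}\check x$ pointwise with the uniform bound $|\nabla\theta_\eps(x)|\leq p(|x|^2+\eps^2)^{(p-1)/2}\leq p(|x|+1)^{p-1}$ for $\eps\in(0,1]$, after localization by $\tau_n=\inf\{t:|X_t|+\int_0^t|Z_s|^2\,ds+\int_0^t\|\psi_s\|^2_{\bL^2_\mu}\,ds+[M]_t\geq n\}\wedge n$, dominated convergence handles the drift and all martingale integrals. The two jump remainders converge by pointwise convergence and the Taylor estimate $|\theta_\eps(a+b)-\theta_\eps(a)-\nabla\theta_\eps(a)\cdot b|\leq C_p|b|^2(|a|^2+|b|^2+\eps^2)^{(p-2)/2}$, which provides a uniform-in-$\eps$ domination against $\pi$ and summability of the $M$-jump sum under the stopping.

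The main obstacle is the continuous quadratic-variation term. For any symmetric $A\geq 0$ one computes
\begin{equation*}
\trace(D^2\theta_\eps(x)A)=p(|x|^2+\eps^2)^{(p-2)/2}\trace(A)+p(p-2)(|x|^2+\eps^2)^{(p-4)/2}(x^*Ax),
\end{equation*}
which on $\{x\neq 0\}$ converges to $p|x|^{p-2}[(2-p)(\trace(A)-(\check x)^*A\check x)+(p-1)\trace(A)]$, matching the last two lines of \eqref{eq:Ito_formula_x_p}. For $p>1$ the limiting density is bounded by $C_p|x|^{p-2}\trace(A)$ and is integrable against $d\langle X^c\rangle_s=Z_sZ_s^*\,ds+d[M]^c_s$ under the stopping, so the zero set $\{X_s=0\}$ contributes nothing in the limit and the indicator $\1_{X_s\neq 0}$ is automatic. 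For $p=1$ the density $|x|^{-1}$ is not locally integrable and the mass accumulated near the zero set produces the continuous nondecreasing process $L$; that $L$ starts at $0$ and grows only on the boundary of $\{X_{t^-}=X_t=0\}$ is then established by the occupation-time / Tanaka-Meyer argument of Lemma 2.2 in \cite{bria:dely:hu:03} and Proposition 2.1 in \cite{klim:13b}, the only novelty being that the continuous bracket contains the orthogonal-martingale contribution $[M]^c$ in addition to the Brownian bracket.
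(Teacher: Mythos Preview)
Your overall strategy---smoothing by $\theta_\eps(x)=(|x|^2+\eps^2)^{p/2}$, applying It\^o's formula, and letting $\eps\downarrow 0$---is exactly the paper's approach, and your treatment of the first-derivative terms and of the jump remainders is fine. The gap is in the continuous quadratic-variation term for $1<p<2$.

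You write that ``for $p>1$ the limiting density is bounded by $C_p|x|^{p-2}\trace(A)$ and is integrable against $d\langle X^c\rangle_s$ under the stopping, so the zero set $\{X_s=0\}$ contributes nothing in the limit.'' This is not justified. Under your localization you only know that $|X_{s^-}|\le n$ and $\int_0^{\tau_n}|Z_s|^2\,ds+[M]^c_{\tau_n}\le n$; when $1<p<2$ the factor $|X_s|^{p-2}$ blows up near $\{X_s=0\}$, and there is \emph{no} a priori reason why $\int_0^{\tau_n}|X_s|^{p-2}\1_{X_s\neq 0}\,d\langle X^c\rangle_s$ is finite. Equivalently, the approximating density $(|X_s|^2+\eps^2)^{(p-2)/2}$ admits no $\eps$-uniform integrable majorant, so dominated convergence does not apply. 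You also do not explain why no local-time correction survives when $p>1$; asserting that ``the zero set contributes nothing'' begs the question.

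The paper handles this by rewriting $\trace(D^2\theta_\eps(y)\Gamma)$, for $\Gamma\ge 0$, as a sum of three nonnegative pieces: two of the form $p(2-p)(|y|/u_\eps(y))^{4-p}|y|^{p-2}[\trace(\Gamma)-\check y^*\Gamma\check y]\1_{y\neq 0}$ and $p(p-1)(|y|/u_\eps(y))^{4-p}|y|^{p-2}\trace(\Gamma)\1_{y\neq 0}$, which are \emph{monotone increasing} as $\eps\downarrow 0$ (since $|y|/u_\eps(y)\nearrow \1_{y\neq 0}$), plus the residual $p\eps^2 u_\eps(y)^{p-4}\trace(\Gamma)$. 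Monotone convergence then gives the two desired integrals in \eqref{eq:Ito_formula_x_p} (their finiteness follows from the It\^o identity itself), and the residual $C_\eps^p(t)=p\eps^2\int_0^t u_\eps(X_s)^{p-4}\,d\langle X^c\rangle_s$ is shown to vanish for $p>1$ by the H\"older estimate
\[
C_\eps^p(t)\le p\Bigl(\int_0^t \eps^2 u_\eps(X_s)^{-3}\,d\langle X^c\rangle_s\Bigr)^{(4-p)/3}\Bigl(\int_0^t \eps^2 \,d\langle X^c\rangle_s\Bigr)^{(p-1)/3},
\]
the first factor being bounded (it converges to $L(t)$) and the second tending to $0$. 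This is precisely the step that produces $L$ when $p=1$ and kills it when $p>1$; your proposal skips it.
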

\begin{proof}
Since in the case $p\in [1,2)$ the function $\theta$ is not smooth enough to apply It\^o's formula we use an approximation. Let $\eps > 0$ and let us consider the function $u_\eps(y) = (|y|^2 + \eps^2)^{1/2}$. It is a smooth function and we have
$$\frac{\partial u_\eps^p}{\partial y_i} (y)= p y_i u_\eps(y)^{p-2}, \frac{\partial^2 u_\eps^p}{\partial y_i \partial y_j} (y)= p u_\eps(y)^{p-2} \delta_{i,j} + p(p-2)y_iy_j u_\eps(y)^{p-4}.$$
We apply It\^o's formula to $X$:
\begin{eqnarray} \nonumber
&& u_\eps(X_t)^p = u_\eps(X_0)^p + \int_{0}^{t} p  u_\eps(X_s)^{p-2} X_s K_s ds  +  p  \int_{0}^{t}  u_\eps(X_s)^{p-2}  X_{s} Z_s dW_s    \\ \nonumber
& &\quad  +  p  \int_{0}^{t}  u_\eps(X_{s^-})^{p-2}  X_{s^-} d M_s +  p  \int_{0}^{t}  u_\eps(X_{s^-})^{p-2}  X_{s^-}   \int_\cU \psi_s(u) \tpi(du,ds) \\ \nonumber
& &\quad  + \frac{1}{2}\int_0^t \trace \left( D^2(u_\eps^p)(X_s) Z_sZ_s^* \right) ds \\ \nonumber
& &\quad + \int_{0}^{t}  \int_\cU \left( u_\eps(X_{s^-}+\psi_s(u))^p - u_\eps(X_{s^-})^p - pX_{s^-} u_\eps(X_{s^-})^{p-2} \psi_s(u) \right) \pi(du,ds) \\  \nonumber
&& \quad + \frac{1}{2} \int_0^t \sum_{1\leq i,j\leq d}  \frac{\partial^2 u_\eps^p}{\partial y_i \partial y_j} (X_s) d[ M^i, M^j ]^c_s \\ \label{eq:Ito_formula_approx}
&& \quad + \sum_{0< s \leq t}  \left( u_\eps(X_{s^-}+\Delta M_s)^p - u_\eps(X_{s^-})^p - pX_{s^-} u_\eps(X_{s^-})^{p-2} \Delta M_s \right).
\end{eqnarray}
Now we have to pass to the limit when $\eps$ goes to 0. As in \cite{bria:dely:hu:03} for the terms involving the first derivatives of $u_\eps$ we have
\begin{eqnarray*}
\int_{0}^{t}  u_\eps(X_s)^{p-2} X_s K_s ds & \longrightarrow & \int_0^t |X_s|^{p-1} \check{X}_s K_s ds \\
\int_{0}^{t}  u_\eps(X_s)^{p-2}  X_{s} Z_s dW_s & \longrightarrow & \int_0^t |X_s|^{p-1} \check{X}_s Z_s dW_s \\
\int_{0}^{t}  u_\eps(X_s)^{p-2}  X_{s}  \int_\cU \psi_s(u) \tpi(du,ds)  & \longrightarrow & \int_0^t |X_s|^{p-1} \check{X}_s  \int_\cU \psi_s(u) \tpi(du,ds)  \\
 \int_{0}^{t}  u_\eps(X_{s^-})^{p-2}  X_{s^-} d M_s & \longrightarrow &  \int_0^t |X_{s^-}|^{p-1} \check{X}_{s^-} dM_s.
\end{eqnarray*}
Moreover by the same arguments (convexity of $u_\eps$ and Fatou's lemma) the two following terms
$$ \int_{0}^{t}  \int_\cU \left[ u_\eps(X_{s^-}+\psi_s(u))^p - u_\eps(X_{s^-})^p - pX_{s^-} u_\eps(X_{s^-})^{p-2} \psi_s(u) \right] \pi(du,ds) $$
$$\sum_{0< s \leq t}  \left[ u_\eps(X_{s^-}+\Delta M_s)^p - u_\eps(X_{s^-})^p - pX_{s^-} u_\eps(X_{s^-})^{p-2} \Delta M_s \right]$$
converge, at least in probability, to
$$ \int_{0}^{t}  \int_\cU \left[ |X_{s^-}+\psi_s(u)|^p -|X_{s^-}|^p - p|X_{s^-}|^{p-1} \check{X}_{s^-}  \psi_s(u) \right] \pi(du,ds) $$
$$\sum_{0< s \leq t}  \left[ |X_{s^-}+\Delta M_s|^p - |X_{s^-}|^p - p|X_{s^-}|^{p-1}  \check{X}_{s^-} \Delta M_s \right].$$
Now for a non negative symmetric matrix $\Gamma \in \R^{d\times d}$
\begin{eqnarray} \nonumber
&& \sum_{1\leq i,j \leq d} D^2\theta(y)_{i,j} \Gamma_{i,j}   =  p u_\eps(y)^{p-2} \trace(\Gamma) + p(p-2) u_\eps(y)^{p-4} (y^*)\Gamma y \\ \nonumber
& & =  p(2-p) \left( \frac{|y|}{u_\eps(y)}\right)^{4-p} |y|^{p-2} \left[ \trace(\Gamma) - (\check{y})^* \Gamma \check{y} \right] \1_{y\neq 0} \\ \label{eq:meyer_ito_1}
& &\qquad + p(p-1) \left( \frac{|y|}{u_\eps(y)}\right)^{4-p} |y|^{p-2}\trace(\Gamma)\1_{y\neq 0} + p\eps^2 u_\eps(y)^{p-4} \trace(\Gamma) .
\end{eqnarray}
We have the following properties:
\begin{itemize}
\item $ \trace(\Gamma) \geq (\check{y})^* \Gamma \check{y}$,
\item $ \frac{|y|}{u_\eps(y)} \nearrow \1_{y\neq 0}$ as $\eps \searrow 0$.
\end{itemize}
For $\Gamma_s = Z_s Z^*_s$, by monotone convergence we obtain that
$$\int_0^t  \left( \frac{|X_{s}|}{u_\eps(X_{s})}\right)^{4-p} |X_{s}|^{p-2} \left\{(2-p)\left[ |Z_s|^2- (\check{X_{s}})^* Z_sZ_s^* \check{X_{s}} \right] + (p-1)|Z_s|^2 \right\} \1_{X_s\neq 0} ds$$
converges $\P$-a.s. for all $0\leq t \leq T$ to
$$\int_0^t  |X_{s}|^{p-2} \left\{(2-p)\left[ |Z_s|^2 - (\check{X_{s}})^* Z_s Z_s^* \check{X_{s}} \right] + (p-1)|Z_s|^2 \right\} \1_{X_s\neq 0} ds.$$
And for the integral w.r.t.\ the matrix $[ M, M ]^c = ([ M^i,M^j]^c_t, \ 1\leq i,j\leq d)$ we have the same result and the convergence to
$$\int_0^t  |X_{s}|^{p-2} \1_{X_s\neq 0}  \left\{(2-p)\left[ d[ M ]^c_s - (\check{X_{s}})^* d[M,M]^c_s \check{X_{s}} \right] + (p-1)d[ M ]^c_s \right\} ,$$
where $[M]^c = \sum_{i=1}^d [M^i, M^i]^c$. There is one remaining term in \eqref{eq:Ito_formula_approx}:
$$C_\eps^p(t)=p  \eps^2 \int_0^t u_\eps(X_s)^{p-4} \left[ |Z_s|^2 ds + d[ M]^c_s \right].$$
It follows from \eqref{eq:Ito_formula_approx} and the considerations above that this term converges to a process $L^p(t)$. By the same arguments as in \cite{bria:dely:hu:03}, we can prove that $L^p(t) = 0$ if $p > 1$. Indeed if $p\geq 4$, $u_\eps(X_s)^{p-4}$ converges in $L^1(\Omega\times (0,T))$ and if $1<p<4$, using H\"older inequality with $\theta = (4-p)/3 \in (0,1)$:
$$C_\eps^p(t)\leq p \left( \int_0^t \eps^2 u_\eps(X_s)^{-3}\left[ |Z_s|^2 ds+d[M]^c_s\right]  \right)^\theta \left( \int_0^t \eps^2 \left[ |Z_s|^2 ds+d[M]^c_s\right]  \right)^{1-\theta}.$$
Since the first term in the right-hand side converges to $L^1(t)$, $C_\eps^p(t)$ tends to zero.

Let us denote by $L(t)$ the process $L^1(t)$ and we proceed almost as in Chapter IV.7 (see Theorem 69) in \cite{prot:04}. By letting $\eps$ tend to zero in \eqref{eq:Ito_formula_approx} we obtain that $L$ satisfies \eqref{eq:Ito_formula_x_p}. By identifying the jumps on both sides of the equation it follows that $L$ is continuous. Moreover, $L$ is non decreasing in time. Now let us set $A=\{t\in[0;T]; \ X_{t^-}=X_t =0\}$. If $t$ is in the interior of $A$, then there exists $\delta > 0$ such that $X_s = 0$ whenever $|t-s|\leq \delta$ and the quadratic variation of $X$ is constant on the interval $[t - \delta; t + \delta]$ and then $Z_s = 0$ and $[M]_s=0$ almost everywhere on this interval. Hence $L$ does not increase in the interior of $A$.
Now assume that $t$ is in the interior of the complement of $A$. Since $L$ is continuous, the associated measure $dL$ is diffusive and does not charge any countable set. In particular, as $X$ is \cad, $dL$ does not charge the points where $X$ jumps. Hence, we can assume that $X_t=X_{t-}$. Then there exists some $\delta>0$ such that $X_s\neq 0$ for $|t-s|<\delta$. Consequently, $L(s) = L(t)$ for $|t-s|<\delta$, which completes the proof.
\end{proof}

As a byproduct to the proof we obtain the following lemma.
\begin{Lemma}\label{lem:Ito_formula_x_p_consequence}
For $p\in (1,2)$, for any $t \geq 0$ 
$$\int_0^t \1_{X_s=0} \left[ |Z_s|^2 ds+d[M]^c_s\right] = 0.$$
\end{Lemma}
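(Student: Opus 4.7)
The plan is to extract the claim directly from a residual term that already appeared in the proof of Lemma~\ref{lem:Ito_formula_x_p}. Recall the quantity
$$C_\eps^p(t) = p\eps^2 \int_0^t u_\eps(X_s)^{p-4}\left[|Z_s|^2\,ds + d[M]^c_s\right],$$
which was shown there to converge to $0$ as $\eps\to 0$ (at least in probability) whenever $p>1$. This convergence is the only input I would need.

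First I would bound $C_\eps^p(t)$ from below by restricting the integrand to the random set $\{s\in[0,t]:X_s=0\}$. On that set $u_\eps(X_s)=\eps$, so $u_\eps(X_s)^{p-4}=\eps^{p-4}$, and therefore
$$C_\eps^p(t) \;\geq\; p\eps^2 \int_0^t \1_{\{X_s=0\}} u_\eps(X_s)^{p-4}\left[|Z_s|^2\,ds + d[M]^c_s\right] \;=\; p\,\eps^{p-2}\int_0^t \1_{\{X_s=0\}}\left[|Z_s|^2\,ds + d[M]^c_s\right].$$
Both the $ds$-integral and the $d[M]^c_s$-integral are insensitive to the countably many jump times of $X$, so working with $X_s$ rather than $X_{s^-}$ causes no issue.

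Next, since $p\in(1,2)$ we have $p-2<0$, hence $\eps^{p-2}\to+\infty$ as $\eps\to 0$, while the left-hand side $C_\eps^p(t)\to 0$. Passing to an a.s.\ convergent subsequence if necessary and letting $\eps\to 0$, the finite limit on the left forces the nonnegative random variable $\int_0^t \1_{\{X_s=0\}}[|Z_s|^2\,ds + d[M]^c_s]$ to vanish almost surely, which is exactly the claim.

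The only potentially delicate point is the mode of convergence of $C_\eps^p(t)$ toward $0$: the proof of Lemma~\ref{lem:Ito_formula_x_p} established convergence in probability via a H\"older argument, so one has to pass to a subsequence before invoking the pointwise lower bound. Apart from that subsequence extraction, no genuine calculation is required.
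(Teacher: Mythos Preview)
Your proof is correct and follows essentially the same route as the paper's: both isolate the contribution of $\{X_s=0\}$ to $C_\eps^p(t)$, observe it equals $p\eps^{p-2}\int_0^t \1_{X_s=0}[|Z_s|^2\,ds+d[M]^c_s]$, and conclude from $C_\eps^p(t)\to 0$ together with $\eps^{p-2}\to\infty$. The paper writes an exact two-term decomposition according to $\{X_s\neq 0\}$ and $\{X_s=0\}$ rather than your lower bound, but this is only a cosmetic difference.
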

\begin{proof}
Indeed $C_\eps^p$ can be written as follows:
\begin{eqnarray*}
C_\eps^p(t) & = & p  \eps^2 \int_0^t u_\eps(X_s)^{p-4} \left[ |Z_s|^2 ds + d[ M]^c_s \right] \\
& = & p  \eps^2 \int_0^t  \left( |X_s|^2 +\eps^2 \right)^{p/2-2} \1_{X_s \neq 0} \left[ |Z_s|^2 ds+ d[ M]^c_s \right] \\
& & \quad  + p  \eps^{p-2} \int_0^t  \1_{X_s = 0} \left[ |Z_s|^2 ds + d[ M]^c_s \right].
\end{eqnarray*}
Hence $C_\eps^p$ can converge to zero if and only if the last term is zero. 
\end{proof}

\begin{Coro} \label{coro:ito_form_xp}
If $(Y,Z,\psi,M)$ is a solution of BSDE \eqref{eq:gene_BSDE}, $p\in [1,2)$, $c(p) =\frac{p(p-1)}{2}$ and $0\leq t \leq r \leq T$, then:
\begin{eqnarray} \nonumber
&& |Y_t|^p \leq |Y_r|^p + p \int_t^r |Y_s|^{p-1} \check{Y}_s f(s,Y_s,Z_s,\psi_s) ds  -  p \int_t^r |Y_s|^{p-1} \check{Y}_s Z_s dW_s  \\ \nonumber
& &\quad  -  p  \int_t^r |Y_{s^-}|^{p-1} \check{Y}_{s^-} dM_s -  p \int_t^r |Y_s|^{p-1} \check{Y}_s  \int_\cU \psi_s(u) \tpi(du,ds)   \\ \nonumber
& &\quad -  \int_{t}^{r}  \int_\cU \left[ |Y_{s^-}+\psi_s(u)|^p -|Y_{s^-}|^p - p|Y_{s^-}|^{p-1} \check{Y}_{s^-}  \psi_s(u) \right] \pi(du,ds) \\  \nonumber
&& \quad - \sum_{0< t \leq r}  \left[ |Y_{s^-}+\Delta M_s|^p - |Y_{s^-}|^p - p|Y_{s^-}|^{p-1} \check{Y}_{s^-}  \Delta M_s \right] \\ \nonumber
&&\quad - c(p) \int_t^r  |Y_{s}|^{p-2} |Z_s|^2 \1_{Y_s\neq 0} ds -c(p) \int_t^r  |Y_{s}|^{p-2}  \1_{Y_s\neq 0} d[ M ]^c_s .
\end{eqnarray}
Moreover if $p\in (1,2)$, then 
$\int_0^t \1_{Y_s=0} \left[ |Z_s|^2 ds+d[M]^c_s\right] = 0$.
\end{Coro}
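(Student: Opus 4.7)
The approach is a direct application of Lemma \ref{lem:Ito_formula_x_p} to the semimartingale $Y$. Reading off the dynamics from the BSDE \eqref{eq:gene_BSDE}, we have $dY_s = -f(s,Y_s,Z_s,\psi_s)\,ds + Z_s\,dW_s + \int_\cU \psi_s(u)\,\tpi(du,ds) + dM_s$, so the finite-variation drift appearing in the lemma is $K_s = -f(s,Y_s,Z_s,\psi_s)$, while the Brownian, Poisson and orthogonal martingale integrators are already in the required form. Writing formula \eqref{eq:Ito_formula_x_p} on $[t,r]$ (i.e., as an identity for $|Y_r|^p - |Y_t|^p$) and then solving for $|Y_t|^p$ produces every integral and jump-sum listed in the statement of the corollary, with the signs coming from the rearrangement combined with the sign of $K_s$.

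Two further contributions remain on the right-hand side and must be disposed of to obtain the inequality. First, $-\tfrac12(L(r)-L(t))\1_{p=1}$ is non-positive because $L$ is non-decreasing, so dropping it can only enlarge the right-hand side; this is only relevant when $p=1$. Second, the quadratic-variation terms of Lemma \ref{lem:Ito_formula_x_p} split as
\begin{equation*}
\frac{p}{2}\int_t^r |Y_s|^{p-2}\1_{Y_s\neq 0}\Bigl\{(2-p)\bigl[|Z_s|^2 - (\check Y_s)^* Z_sZ_s^*\check Y_s\bigr] + (p-1)|Z_s|^2\Bigr\}\,ds,
\end{equation*}
together with the analogous expression in $d[M]^c_s$. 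Since $|Z_s|^2 = \trace(Z_sZ_s^*) \geq (\check Y_s)^* Z_sZ_s^* \check Y_s$ (and the same inequality holds for the non-negative symmetric matrix-valued increment $d[M,M]^c_s$), and since $2-p > 0$ for $p \in [1,2)$, the bracketed $(2-p)$ summand is non-negative. After rearrangement it appears with a minus sign and can therefore be dropped. What remains is the $(p-1)$ part, which after multiplication by $p/2$ produces the coefficient $c(p) = \tfrac{p(p-1)}{2}$ in the last two lines of the corollary; in the case $p=1$ this coefficient automatically vanishes.

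The additional statement that $\int_0^t \1_{Y_s=0}\bigl[|Z_s|^2\,ds + d[M]^c_s\bigr] = 0$ for $p \in (1,2)$ is Lemma \ref{lem:Ito_formula_x_p_consequence} applied with $X = Y$, and requires no further argument. The only point needing care is the sign bookkeeping under rearrangement, but both dropped quantities are manifestly non-negative so no analytical difficulty arises.
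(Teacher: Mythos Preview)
Your proposal is correct and follows exactly the approach of the paper, which simply states that the corollary is a direct consequence of Lemmas \ref{lem:Ito_formula_x_p} and \ref{lem:Ito_formula_x_p_consequence}. You have spelled out precisely the details the paper leaves implicit: applying the Meyer--It\^o formula to $Y$ with $K_s=-f(s,Y_s,Z_s,\psi_s)$, rearranging, dropping the non-positive local-time term, and bounding the quadratic-variation brace from below by its $(p-1)$ part via $\trace(\Gamma)\ge (\check Y_s)^*\Gamma\check Y_s$ to obtain the coefficient $c(p)=\tfrac{p(p-1)}{2}$.
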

\begin{proof}
A direct consequence of Lemmas \ref{lem:Ito_formula_x_p} and \ref{lem:Ito_formula_x_p_consequence}.
\end{proof}

\begin{Lemma} \label{lem:control_jumps_part}
For $p\in [1,2)$, the non-decreasing process involving the jumps of $Y$ controls the quadratic variation as follows:
\begin{eqnarray*}
&&\sum_{0< s \leq t}  \left[ |Y_{s^-}+\Delta M_s|^p - |Y_{s^-}|^p - p|Y_{s^-}|^{p-1} \check{Y}_{s^-} \Delta M_s \right]  \\
&& \quad \geq c(p)  \sum_{0< s \leq t}|\Delta M_s|^2  \left( |Y_{s^-}|^2 \vee  |Y_{s^-} + \Delta M_s|^2 \right)^{p/2-1} \1_{|Y_{s^-}|\vee |Y_{s^-} + \Delta M_s| \neq 0}.
\end{eqnarray*}
The same holds for the jumps due to the Poisson random measure.
\end{Lemma}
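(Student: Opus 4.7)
The plan is to reduce the inequality to a pointwise estimate on the jumps, and then prove the pointwise estimate via Taylor's formula applied to the function $\phi(x) = |x|^p$ on $\R^d$. Concretely, I want to show that for every $x, y \in \R^d$ with $|x|\vee|x+y| \neq 0$,
\begin{equation*}
|x+y|^p - |x|^p - p|x|^{p-1}\check{x}\cdot y \;\geq\; c(p)\,|y|^2 \bigl(|x|^2 \vee |x+y|^2\bigr)^{p/2-1}.
\end{equation*}
Once this is established, the conclusion of the lemma follows by applying it at each jump time $s$ with $x = Y_{s^-}$ and $y = \Delta M_s$ (and similarly with $y = \psi_s(u)$ for the random-measure jumps) and summing. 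Note that on $\{|Y_{s^-}| \vee |Y_{s^-}+\Delta M_s|=0\}$ one has $\Delta M_s = 0$, so the corresponding term vanishes on both sides and the indicator is consistent.

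To prove the pointwise inequality, I would introduce $g(r) = |x+ry|^p$ on $[0,1]$, so the left-hand side equals $g(1) - g(0) - g'(0)$. For $z \neq 0$, straightforward differentiation gives
\begin{equation*}
y^* D^2\phi(z)\, y \;=\; p|z|^{p-2}|y|^2 + p(p-2)|z|^{p-4}(z\cdot y)^2,
\end{equation*}
and since $p-2<0$ and $(z\cdot y)^2 \leq |z|^2|y|^2$ by Cauchy--Schwarz, one obtains $y^* D^2\phi(z)\, y \geq p(p-1)|z|^{p-2}|y|^2$. Hence, by Taylor's formula with integral remainder,
\begin{equation*}
g(1)-g(0)-g'(0) \;=\; \int_0^1 (1-r)\, y^* D^2\phi(x+ry)\, y\, dr \;\geq\; p(p-1)|y|^2 \int_0^1 (1-r)|x+ry|^{p-2}\, dr.
\end{equation*}
Finally, the convexity bound $|x+ry| = |(1-r)x + r(x+y)| \leq (1-r)|x| + r|x+y| \leq |x| \vee |x+y|$ combined with $p-2<0$ yields $|x+ry|^{p-2} \geq (|x|\vee|x+y|)^{p-2} = (|x|^2\vee|x+y|^2)^{p/2-1}$, and $\int_0^1(1-r)\,dr = 1/2$ gives the desired constant $c(p) = p(p-1)/2$.

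The main technical subtlety is dealing with the possibility that $x+ry=0$ for some $r \in (0,1)$, at which point $D^2\phi$ is not defined. However, $r\mapsto |x+ry|^{p-2}$ is integrable against $(1-r)dr$ for $p\in(1,2)$ (the singularity is of order $p-2>-1$), so Taylor's formula can be justified by the approximation $u_\eps(z)=(|z|^2+\eps^2)^{1/2}$ already used in Lemma \ref{lem:Ito_formula_x_p} and passing to the limit $\eps \downarrow 0$; alternatively, one checks the two degenerate boundary cases $x=0$ and $x+y=0$ directly (both reduce to the numerical inequality $p(p-1)/2 \leq p-1 \leq 1$ for $p\in[1,2]$) and handles the generic case by the computation above. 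The case $p=1$ is trivial since $c(1)=0$.
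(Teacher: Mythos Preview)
Your argument is correct and follows essentially the same route as the paper: Taylor's formula with integral remainder, the Hessian lower bound $y^*D^2\phi(z)y\ge p(p-1)|z|^{p-2}|y|^2$ via Cauchy--Schwarz, and the convexity estimate $|x+ry|\le |x|\vee|x+y|$ to extract the constant $c(p)=p(p-1)/2$. The only cosmetic difference is that the paper carries the regularisation $u_\eps$ throughout and passes to the limit at the end, whereas you write the direct argument first and invoke $u_\eps$ (or the explicit check of the two degenerate endpoints) only to justify the singular integral; either way the content is the same.
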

\begin{proof}
We proceed as in the proof of Proposition \ref{prop:Lp_estimates} and we use the approximation of Lemma \ref{lem:Ito_formula_x_p}. Using Taylor expansion we obtain
\begin{eqnarray*}
&&\sum_{0< s \leq t}  \left[ u_\eps(Y_{s^-}+\Delta M_s)^p - u_\eps(Y_{s^-})^p - pY_{s^-} u_\eps(Y_{s^-})^{p-2} \Delta M_s \right] \\
&&\quad =  \sum_{0< s \leq t} \int_0^1 (1-a)  \Delta M_s D^2 (u_\eps(Y_{s^-} + a \Delta M_s)^p)  \Delta M_s da \\
&& \quad = p \sum_{0< s \leq t} \int_0^1 (1-a) | \Delta M_s|^2 u_\eps(Y_{s^-} + a \Delta M_s)^{p-2} da \\
&& \qquad + p(p-2) \sum_{0< s \leq t} \int_0^1 (1-a) \langle \Delta M_s , Y_{s^-} + a \Delta M_s \rangle^2 u_\eps(Y_{s^-} + a \Delta M_s)^{p-4} da \\
&& \quad \geq p(p-1)  \sum_{0< s \leq t}|\Delta M_s|^2  \int_0^1 (1-a) u_\eps(Y_{s^-} + a \Delta M_s)^{p-2} da.
\end{eqnarray*}

Since $|Y_{s^-} + a \Delta M_s| = |(1-a)Y_{s^-} + a (Y_{s^-} + \Delta M_s)| \leq |Y_{s^-}| \vee  |Y_{s^-} + \Delta M_s|$, we obtain:
 \begin{eqnarray*}
&&\sum_{0< s \leq t}  \left[ u_\eps(Y_{s^-}+\Delta M_s)^p - u_\eps(Y_{s^-})^p - pY_{s^-} u_\eps(Y_{s^-})^{p-2} \Delta M_s \right]  \\
&& \quad \geq \frac{p(p-1)}{2}  \sum_{0< s \leq t}|\Delta M_s|^2  \left( |Y_{s^-}|^2 \vee  |Y_{s^-} + \Delta M_s|^2 + \eps^2 \right)^{p/2-1} .
\end{eqnarray*}
Passing to the limit as $\eps$ goes to zero, we obtain:
 \begin{eqnarray*}
&&\sum_{0< s \leq t}  \left[ |Y_{s^-}+\Delta M_s|^p - |Y_{s^-}|^p - p|Y_{s^-}|^{p-1} \check{Y}_{s^-} \Delta M_s \right]  \\
&& \quad \geq \frac{p(p-1)}{2}  \sum_{0< s \leq t}|\Delta M_s|^2  \left( |Y_{s^-}|^2 \vee  |Y_{s^-} + \Delta M_s|^2 \right)^{p/2-1}  \1_{|Y_{s^-}|\vee |Y_{s^-} + \Delta M_s| \neq 0}.
\end{eqnarray*}
This achieves the proof ot the lemma. 
\end{proof}

\begin{Remark} \label{rem:p_geq_2}
If $p\geq 2$, then the conclusions of Corollary \ref{coro:ito_form_xp}
 and of Lemma \ref{lem:control_jumps_part} hold with $c(p)=p/2$. 
\end{Remark}

From now on, we assume that $p\in(1,2)$. The proof of the existence of a unique solution of BSDE \eqref{eq:gene_BSDE} in the space $\cE^p(0,T)$ is based on the following technical result. This estimates are also proved in \cite{klim:14}, Proposition 5.3, but in dimension 1. Moreover this estimate looks very similar to Inequality \eqref{eq:Lp_estim_sol_p_geq 2}. The main difference is that for $p< 2$, or $p/2< 1$, the compensator of a martingale does not control the predictable projection (see \cite{leng:lepi:prat:80} and the counterexample therein). We say that the condition (C) holds if $\P$-a.s.
$$\langle  \check{y}, f(t,y,z,\psi) \rangle \leq f_t + \al |y| + K |z| + K \|\psi\|_{L^2_\mu},$$
with $K \geq 0$ and $f_t$ is a non-negative progressively measurable process. Let us denote $F = \int_0^T f_r dr$.
\begin{Prop}\label{prop:apriori_lp}
Let the assumption (C) hold and let be $(Y,Z,\psi,M)$ be a solution of BSDE \eqref{eq:gene_BSDE} and assume moreover that $F^p$ is integrable and $Y \in \bD^p(0,T)$. Then $(Z,\psi,M)$ belongs to $\bH^p(0,T) \times L^p_\pi(0,T)  \times \bM^p(0,T)$ and there exists a constant $C$ depending on $p$, $K$ and $T$ such that 
\begin{eqnarray*}
&& \E \left[\sup_{t\in[0,T]}   |Y_t|^p +  \left( \int_0^T  |Z_t|^2 dt \right)^{p/2} + \left(  [ M ]_T \right)^{p/2}+  \left(  \int_0^T  \int_{\cU} |\psi_s(u)|^2 \pi(du, ds) \right)^{p/2}\right. \\
&& \qquad \left.  + \left( \int_0^T \int_\cU |\psi_s(u)|^2 \mu(du) ds \right)^{p/2} \right]\leq C \E \left[|\xi|^p  + \left( \int_0^T f_r dr \right)^p \right].
\end{eqnarray*}
\end{Prop}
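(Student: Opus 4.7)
My plan is to apply the Meyer--It\^o expansion from Corollary~\ref{coro:ito_form_xp} to $|Y_t|^p$ between $t\wedge\tau_k$ and $T\wedge\tau_k$, where $(\tau_k)$ localizes the three stochastic integrals to make them true martingales. Since $p\in(1,2)$ the constant $c(p)=p(p-1)/2$ is strictly positive, which is what makes the left-hand side of Corollary~\ref{coro:ito_form_xp} useful. Condition (C) gives
\[
p|Y_s|^{p-1}\langle\check Y_s, f(s,Y_s,Z_s,\psi_s)\rangle\leq p|Y_s|^{p-1}f_s + p\alpha|Y_s|^p + pK|Y_s|^{p-1}(|Z_s|+\|\psi_s\|_{\bL^2_\mu}),
\]
and I would apply Young's inequality in the form $pK|Y_s|^{p-1}|Z_s|\mathbf{1}_{Y_s\neq 0}\leq\tfrac{c(p)}{4}|Y_s|^{p-2}|Z_s|^2\mathbf{1}_{Y_s\neq 0}+C|Y_s|^p$ (and analogously for $\|\psi_s\|_{\bL^2_\mu}$) to absorb the two cross terms into the positive quadratic left-hand side. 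The remaining $f_s$-contribution is controlled through $p\int_0^T|Y_s|^{p-1}f_s\,ds\leq pY_*^{p-1}F\leq\eps Y_*^p+C_\eps F^p$. Conditional expectation and Gronwall then yield a pointwise bound $\E|Y_t|^p\leq C\,\E(|\xi|^p+F^p)$.

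The next step is to convert the positive left-hand terms into genuine $\bH^p$ and $\bM^p$ norms. Here I would exploit Lemma~\ref{lem:Ito_formula_x_p_consequence}, which ensures that $\{Y=0\}$ does not charge the continuous quadratic variations of $Z$ or $M$, to write
\[
\Big(\int_0^T|Z_s|^2\,ds\Big)^{p/2}\leq Y_*^{p(2-p)/2}\Big(\int_0^T|Y_s|^{p-2}|Z_s|^2\mathbf{1}_{Y_s\neq 0}\,ds\Big)^{p/2}\leq\eps Y_*^p+C_\eps\int_0^T|Y_s|^{p-2}|Z_s|^2\mathbf{1}_{Y_s\neq 0}\,ds,
\]
using Young with conjugate exponents $2/(2-p)$ and $2/p$; the same device works for $([M]^c_T)^{p/2}$. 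For the jump parts of $[M]$ and $\int_0^T\int_\cU|\psi_s|^2\,\pi(du,ds)$, Lemma~\ref{lem:control_jumps_part} supplies a lower bound of the form $c(p)\sum|\Delta M_s|^2(|Y_{s^-}|\vee|Y_{s^-}+\Delta M_s|)^{p-2}\mathbf{1}_{\neq 0}$, and the same weight-shifting trick, combined with the observation that $M\perp\tpi$ forces $|Y_{s^-}+\Delta M_s|=|Y_s|\leq Y_*$ at $M$-jump times, gives an analogous bound for $([M]^d_T)^{p/2}$ and for $\bigl(\int_0^T\int_\cU|\psi_s|^2\pi(du,ds)\bigr)^{p/2}$. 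Inequality~\eqref{eq:pred_quad_var} then converts the latter $\pi$-bound into the required $\bL^p_\pi(0,T)$-norm in expectation.

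Finally, I would take the supremum in $t$ on the Itô expansion and apply the Burkholder--Davis--Gundy inequality (valid for all $p\geq 1$) to the three martingale integrals; writing $|Y_{s^-}|^{2p-2}=|Y_{s^-}|^p\cdot|Y_{s^-}|^{p-2}\mathbf{1}_{Y_{s^-}\neq 0}$ and one more Young splitting, each contribution is dominated by $\eps\E Y_*^p$ plus $C_\eps$ times one of the integrals already controlled. Collecting every estimate, choosing $\eps$ small enough, and using the standing hypothesis $Y\in\bD^p(0,T)$ to justify absorbing $\eps\E Y_*^p$ into the left-hand side, I would finally let $k\to\infty$ to remove the localization.

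The crux of the argument, and the reason a direct adaptation of \cite{bria:dely:hu:03} fails, lies in the regime $p/2<1$: inequality~\eqref{eq:pred_quad_var} runs only from the predictable projection $\int|\psi|^2\mu(du)ds$ to the optional $\pi$-variation $\int|\psi|^2\pi$ and not the reverse direction, so the $\bL^p_\pi$-bound is accessible only indirectly via Lemma~\ref{lem:control_jumps_part}. A second technicality is that the maximum $(|Y_{s^-}|\vee|Y_{s^-}+\Delta M_s|)^{p-2}$ furnished by Lemma~\ref{lem:control_jumps_part} is not literally a function of $Y_*$, and identifying it as such demands careful use of the orthogonality of $M$ and $\tpi$ to rule out simultaneous jumps.
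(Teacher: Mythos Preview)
Your outline is close to the paper's, but it has a real gap precisely at the $\psi$-step that you yourself flag as the crux. When you write ``analogously for $\|\psi_s\|_{\bL^2_\mu}$'' and claim to absorb $\tfrac{c(p)}{4}|Y_s|^{p-2}\|\psi_s\|_{\bL^2_\mu}^2\mathbf 1_{Y_s\neq 0}\,ds$ into the ``positive quadratic left-hand side'', note that the left side furnished by Corollary~\ref{coro:ito_form_xp} together with Lemma~\ref{lem:control_jumps_part} contains only
\[
\int_{t}^{T}\!\!\int_\cU |\psi_s(u)|^2\bigl(|Y_{s^-}|\vee|Y_{s^-}+\psi_s(u)|\bigr)^{p-2}\mathbf 1_{\neq 0}\,\pi(du,ds),
\]
a $\pi$-integral, not the $\mu(du)ds$-integral you need. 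There is no pathwise absorption here. The paper closes this by introducing the second localizing stopping time $\hat\tau_k$ and using that, in expectation and after localization, the weighted $\pi$-integral equals the weighted $\mu(du)ds$-integral (Equation~\eqref{eq:local_mart_jump}), because the weight $|Y_{s^-}|^{p-2}\mathbf 1_{Y_{s^-}\neq 0}$ differs from $(|Y_{s^-}|\vee|Y_s|)^{p-2}\mathbf 1_{\neq 0}$ only on a countable set. Only after this step does the absorption, and hence the remaining chain of estimates, go through. Your ``conditional expectation and Gronwall'' does not deliver this.

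The same issue resurfaces in your conversion step. You propose to obtain the $\bL^p_\pi(0,T)$-norm (defined with $\mu(du)ds$) from the $\pi$-bound via~\eqref{eq:pred_quad_var}. But~\eqref{eq:pred_quad_var} is the inequality $\E[(\int|\psi|^2\mu\,ds)^{p/2}]\le e_p\,\E[(\int|\psi|^2\pi)^{p/2}]$, which the paper states and uses only for $p\ge 2$; for $p/2<1$ only the reverse direction (optional $\le$ predictable) follows from Lenglart domination, and the direction you invoke fails in general (this is exactly the obstruction the paper highlights before the proposition). The paper therefore never passes through the unweighted $\pi$-norm: it bounds the weighted quantity $\E\int_0^T|Y_s|^{p-2}\mathbf 1_{Y_s\neq 0}\|\psi_s\|_{\bL^2_\mu}^2\,ds$ directly via~\eqref{eq:local_mart_jump} and then applies your weight-shifting trick to the $\mu(du)ds$-integral itself (approximating with $u_\eps$).

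Two minor remarks. Your Gronwall route is a legitimate alternative to the paper's $e^{at}$-change of variables (which makes $\alpha+2K^2/(p-1)\le 0$ and removes the $\int|Y_s|^p\,ds$ term outright). And your worry that $|Y_{s^-}+\Delta M_s|$ might exceed $Y_*$ is unfounded: orthogonality of $M$ and $\tpi$ forces $\sum_s\Delta M_s\cdot\psi_s(\Delta X_s)=0$ in every coordinate, so at a genuine $M$-jump time the Poisson part does not jump and $Y_{s^-}+\Delta M_s=Y_s$.
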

Once again let us emphasize that the dependence of $f$ w.r.t. $\psi$ implies that we have to control the two expectations containing the term $\psi$. A crucial point in the proof of Proposition \ref{prop:Lp_estimates} was Inequality \eqref{eq:pred_quad_var}. Now in the case $p< 2$ we can not control (see \cite{leng:lepi:prat:80}, Section 4) the expectation of the predictable projection:
$$\E \left(  \int_0^T  \int_{\cU} |\psi_s(u)|^2 \pi(du, ds) \right)^{p/2}$$
with the expectation of the quadratic variation:
$$\E \left(  \int_0^T  \int_{\cU} |\psi_s(u)|^2 \pi(du, ds) \right)^{p/2}.$$

\begin{proof}
For some $a\in \R$, let us define $\wtil Y_t =e^{at}Y_t$, $\wtil Z_t =e^{at} Z_t$, $\wtil \psi_t = e^{at} \psi_t$ and $d\wtil M_t = e^{at} dM_t$. $(\wtil Y, \wtil Z, \wtil \psi, \wtil M)$ satisfies an analogous BSDE with terminal condition $\wtil \xi = e^{a T}\xi$ and generator
$$\wtil f(t,y,z,\psi) = e^{a t} f(t,e^{-a t} y, e^{-a t}z, e^{-a t} \psi) - a y.$$
$\wtil f$ satisfies assumptions ${\bf (H_{ex})}$ and (C) with $\wtil K = K$ and $\wtil \al = \al - a$. We choose $a$ large enough such that 
$$\wtil \al + 2K^2/(p-1) \leq 0.$$
Since we are working on a compact time interval, the integrability
conditions are equivalent with or without the superscript \textasciitilde. We omit the superscript \textasciitilde \ for notational convenience. 

\noindent {\bf Step 1:} We prove first that if $\al+2K^2/(p-1) \leq 0$, there exists a constant $\kappa_p$ such that 
$$\E (Y_*^p) \leq \kappa_p \E\left( X \right),$$
where 
$$Y_* = \sup_{t\in [0,T]} |Y_t|, \quad \mbox{and} \quad X =  |\xi|^p +   p \int_0^T |Y_s|^{p-1} f_s ds.$$
We apply Corollary \ref{coro:ito_form_xp} for $\tau \in \cT_T$:
\begin{eqnarray*}
&& |Y_{t\wedge \tau}|^p + c(p) \int_{t\wedge \tau}^{\tau}  |Y_{s}|^{p-2} |Z_s|^2 \1_{Y_s\neq 0} ds + c(p) \int_{t\wedge \tau}^{\tau}  |Y_{s}|^{p-2}  \1_{Y_s\neq 0} d[ M ]^c_s \\
&& \leq |Y_{\tau}|^p+ p \int_{t\wedge \tau}^{\tau} |Y_s|^{p-1} \check{Y}_s f(s,Y_s,Z_s,\psi_s) ds  -  p \int_{t\wedge \tau}^{\tau} |Y_s|^{p-1} \check{Y}_s Z_s dW_s  \\
& &\quad  -  p  \int_{t\wedge \tau}^{\tau} |Y_{s^-}|^{p-1} \check{Y}_{s^-} dM_s -  p \int_{t\wedge \tau}^{\tau} |Y_s|^{p-1} \check{Y}_s  \int_\cU \psi_s(u) \tpi(du,ds)  \\
& &\quad -  \int_{t\wedge \tau}^{\tau}  \int_\cU \left[ |Y_{s^-}+\psi_s(u)|^p -|Y_{s^-}|^p - p|Y_{s^-}|^{p-1} \check{Y}_{s^-} \psi_s(u) \right] \pi(du,ds) \\
&& \quad - \sum_{t\wedge \tau< s \leq \tau}  \left[ |Y_{s^-}+\Delta M_s|^p - |Y_{s^-}|^p - p|Y_{s^-}|^{p-1}  \check{Y}_{s^-} \Delta M_s \right] .
\end{eqnarray*}
With the assumption on $f$ this becomes
\begin{eqnarray*}
&& |Y_{t\wedge \tau}|^p + c(p) \int_{t\wedge \tau}^{\tau}  |Y_{s}|^{p-2} |Z_s|^2 \1_{Y_s\neq 0} ds + c(p) \int_{t\wedge \tau}^{\tau}  |Y_{s}|^{p-2}  \1_{Y_s\neq 0} d[ M ]^c_s \\
&& \leq |Y_{\tau}|^p + p \int_{t\wedge \tau}^{\tau} \left( |Y_s|^{p-1} f_s + \al |Y_s|^p  \right) ds + pK \int_{t\wedge \tau}^{\tau} |Y_s|^{p-1} |Z_s| ds  \\
&& \qquad + pK \int_{t\wedge \tau}^{\tau} |Y_s|^{p-1} \|\psi_s\|_{L^2_\mu} ds  -  p \int_{t\wedge \tau}^{\tau} |Y_s|^{p-1} \check{Y}_s Z_s dW_s  \\
& &\qquad  -  p  \int_{t\wedge \tau}^{\tau} |Y_{s^-}|^{p-1} \check{Y}_{s^-} dM_s -  p \int_{t\wedge \tau}^{\tau} |Y_s|^{p-1} \check{Y}_s  \int_\cU \psi_s(u) \tpi(du,ds)  \\
& &\quad -  \int_{t\wedge \tau}^{\tau}  \int_\cU \left[ |Y_{s^-}+\psi_s(u)|^p -|Y_{s^-}|^p - p|Y_{s^-}|^{p-1}  \check{Y}_{s^-} \psi_s(u) \right] \pi(du,ds) \\
&& \quad - \sum_{t\wedge \tau< s \leq \tau}  \left[ |Y_{s^-}+\Delta M_s|^p - |Y_{s^-}|^p - p|Y_{s^-}|^{p-1} \check{Y}_{s^-}  \Delta M_s \right]
\end{eqnarray*}
Moreover
 \begin{eqnarray*}
pK |Y_s|^{p-1} |Z_s| & \leq & \frac{pK^2}{p-1} |Y_s|^p + \frac{c(p)}{2}  |Y_{s}|^{p-2} |Z_s|^2 \1_{Y_s\neq 0} \\
pK |Y_s|^{p-1} \|\psi_s\|_{L^2_\mu} & \leq & \frac{pK^2}{p-1} |Y_s|^p + \frac{c(p)}{2}  |Y_{s}|^{p-2} \|\psi_s\|^2_{L^2_\mu} \1_{Y_s\neq 0} 
\end{eqnarray*}
and from the previous lemma
 \begin{eqnarray*}
&& \int_{t\wedge \tau}^{\tau}  \int_\cU \left[ |Y_{s^-}+\psi_s(u)|^p -|Y_{s^-}|^p - p|Y_{s^-}|^{p-1}  \check{Y}_{s^-} \psi_s(u) \right] \pi(du,ds) \\
&& \quad \geq c(p) \int_{t\wedge \tau}^{\tau}  \int_\cU |\psi_s(u)|^2  \left( |Y_{s^-}|^2 \vee  |Y_{s^-} +\psi_s(u)|^2 \right)^{p/2-1} \1_{|Y_{s^-}|\vee |Y_{s^-} + \psi_s(u)| \neq 0} \pi(du,ds).
\end{eqnarray*}
and
 \begin{eqnarray*}
&&\sum_{t\wedge \tau< s \leq \tau}  \left[ |Y_{s^-}+\Delta M_s|^p - |Y_{s^-}|^p - p|Y_{s^-}|^{p-1} \check{Y}_{s^-} \Delta M_s \right]  \\
&& \quad \geq c(p)  \sum_{t\wedge \tau< s \leq \tau}|\Delta M_s|^2  \left( |Y_{s^-}|^2 \vee  |Y_{s^-} + \Delta M_s|^2 \right)^{p/2-1}  \1_{|Y_{s^-}|\vee |Y_{s^-} + \Delta M_s| \neq 0}
\end{eqnarray*}
Therefore we deduce the following inequality:
\begin{eqnarray} \nonumber
&& |Y_{t\wedge \tau}|^p + \frac{c(p)}{2} \int_{t\wedge \tau}^\tau  |Y_{s}|^{p-2} |Z_s|^2 \1_{Y_s\neq 0} ds + c(p) \int_{t\wedge \tau}^\tau  |Y_{s}|^{p-2}  \1_{Y_s\neq 0} d[ M ]^c_s \\  \nonumber
&& + c(p) \sum_{t\wedge \tau< s \leq \tau}  \left( |Y_{s^-}|^2 \vee  |Y_{s^-} + \Delta M_s|^2 \right)^{p/2-1}  \1_{|Y_{s^-}|\vee |Y_{s^-} + \Delta M_s| \neq 0} |\Delta M_s|^2\\  \nonumber
&& + c(p)  \int_{t\wedge \tau}^{\tau}  \int_\cU |\psi_s(u)|^2  \left( |Y_{s^-}|^2 \vee  |Y_{s^-} +\psi_s(u)|^2 \right)^{p/2-1} \1_{|Y_{s^-}|\vee |Y_{s^-} + \psi_s(u)| \neq 0}\pi(du,ds) \\  \nonumber
&& - \frac{c(p)}{2}  \int_{t\wedge \tau}^{\tau}  |Y_{s}|^{p-2} \|\psi_s\|^2_{L^2_\mu} \1_{Y_s\neq 0} ds \\  \nonumber
&& \leq |Y_\tau|^p +   p \int_{t\wedge \tau}^\tau \left( |Y_s|^{p-1} f_s + \al |Y_s|^p  \right) ds  +p \int_{t\wedge \tau}^\tau \frac{2K^2}{p-1} |Y_s|^p ds  \\ \label{eq:Lp_apriori_estim_1}
&& \quad  -  p \int_{t\wedge \tau}^\tau |Y_s|^{p-1} \check{Y}_s \left( Z_s dW_s  + dM_s +  \int_\cU \psi_s(u) \tpi(du,ds) \right).
\end{eqnarray}
At the very beginning of this proof we suppose that $\al + \frac{2K^2}{p-1}\leq 0 $. Thus the term $(\alpha + \frac{2K^2}{p-1} )  \int_{t\wedge \tau}^\tau |Y_s|^p ds$ disappears. Let us define $\tau_k$ as a fundamental sequence of stopping times for the local martingale 
$$\int_{0}^{.} |Y_s|^{p-1} \check{Y}_s \left( Z_s dW_s  + dM_s +  \int_\cU \psi_s(u) \tpi(du,ds) \right).$$
Let 
$$\hat \tau_k  = \inf \left\{ t\geq 0, \quad  \int_{0}^{t}  \int_\cU |\psi_s(u)|^2  \left( |Y_{s^-}|^2 \vee  |Y_{s}|^2 \right)^{p/2-1} \1_{|Y_{s^-}|\vee |Y_{s}| \neq 0} \pi(du, ds) \geq k \right\}\wedge T.$$
We take $\tau = \tau_k \wedge \hat \tau_k$. Now we have:
\begin{eqnarray}\nonumber
&& \E  \int_{0}^{\tau}  \int_\cU |\psi_s(u)|^2  \left( |Y_{s^-}|^2 \vee  |Y_{s^-} +\psi_s(u)|^2 \right)^{p/2-1} \1_{|Y_{s^-}|\vee |Y_{s^-} + \psi_s(u)| \neq 0} \pi(du,ds) \\ \nonumber
&& \quad =  \E  \int_{0}^{\tau}  \int_\cU |\psi_s(u)|^2  \left( |Y_{s^-}|^2 \vee  |Y_{s}|^2 \right)^{p/2-1} \1_{|Y_{s^-}|\vee |Y_{s}| \neq 0} \pi(du, ds) \\ \label{eq:local_mart_jump}
&& \quad = \E  \int_{0}^{\tau}  \int_\cU |\psi_s(u)|^2  |Y_{s}|^{p-2} \1_{Y_{s} \neq 0} \mu(du) ds
\end{eqnarray}
the last equality coming from the localization due to $\hat \tau_k$ and since the set $\{ s \geq 0, \ Y_{s} \neq Y_{s^-} \}$ is countable. Taking the expectation in \eqref{eq:Lp_apriori_estim_1}, we get
\begin{eqnarray} \nonumber
&&\E (|Y_{t\wedge \tau}|^p) + \frac{c(p)}{2} \E \int_0^\tau  |Y_{s}|^{p-2} |Z_s|^2 \1_{Y_s\neq 0} ds + c(p) \E \int_0^\tau  |Y_{s}|^{p-2}  \1_{Y_s\neq 0} d[ M ]^c_s \\ \nonumber
&& \qquad + c(p) \E \sum_{0< s \leq \tau}  \left( |Y_{s^-}|^2 \vee  |Y_{s^-} + \Delta M_s|^2 \right)^{p/2-1}  \1_{|Y_{s^-}|\vee |Y_{s^-} + \Delta M_s| \neq 0}|\Delta M_s|^2\\ \nonumber
&& \qquad + \frac{c(p)}{2} \E  \int_{t}^{\tau}  \int_\cU |\psi_s(u)|^2  \left( |Y_{s^-}|^2 \vee  |Y_{s^-} +\psi_s(u)|^2 \right)^{p/2-1} \1_{|Y_{s^-}|\vee |Y_{s^-} + \psi_s(u)| \neq 0} \pi(du,ds) \\ \label{eq:Lp_estim_p_leq_2}
 && \quad \leq  \E \left( |Y_\tau|^p +   p \int_{0}^\tau  |Y_s|^{p-1} f_s ds \right).
 \end{eqnarray}
We also obtain:
$$ \frac{c(p)}{2}\E  \int_{0}^\tau  \int_\cU |\psi_s(u)|^2  |Y_{s}|^{p-2} \1_{Y_{s} \neq 0} \mu(du) ds \leq  \E \left( |Y_\tau|^p +   p \int_{0}^\tau  |Y_s|^{p-1} f_s ds \right).$$ 
Recall that $X$ is the quantity
$$X =  |\xi|^p +   p \int_0^T |Y_s|^{p-1} f_s ds.$$
Then we can pass to the limit on $k$ in \eqref{eq:Lp_estim_p_leq_2}, and we obtain the same estimate for $\tau=T$ and $\E(X)$ on the right-hand side. 
Let us split the local martingale of \eqref{eq:Lp_apriori_estim_1} into three terms:
\begin{eqnarray*}
\Gamma_t & = & \int_0^t |Y_s|^{p-1} \check{Y}_s Z_s dW_s ,\\
\Theta_t & = & \int_0^t |Y_s|^{p-1} \check{Y}_s dM_s, \quad \Xi_t =  \int_0^t |Y_s|^{p-1} \check{Y}_s  \int_\cU \psi_s(u) \tpi(du,ds)  .
\end{eqnarray*}
Then using \eqref{eq:Lp_apriori_estim_1} and BDG inequality
\begin{eqnarray*}
\E (Y_*^p) & \leq & \E\left( X \right) + k_p \E \left( [\Gamma]_T^{1/2} + [\Theta]_T^{1/2} + [\Xi]_T^{1/2} \right).
\end{eqnarray*}
The bracket $[\Gamma]_T^{1/2}$ can be handled as in \cite{bria:dely:hu:03}:
\begin{eqnarray*}
k_p \E \left(  [\Gamma]_T^{1/2} \right) & \leq & \frac{1}{6}  \E \left( Y_*^{p} \right) +  \frac{3k_p^2 }{2} \E \left( \int_0^T  |Y_{s}|^{p-2} |Z_s|^2 \1_{Y_s\neq 0} ds \right).
\end{eqnarray*}
For the other terms we have
\begin{eqnarray*}
k_p \E \left(  [\Xi]_T^{1/2} \right) & \leq & k_p \E \left( Y_*^{p/2} \left( \int_0^T  |Y_{s}|^{p-2} |\psi_s|^2 \1_{Y_s\neq 0} \pi(du,ds) \right)^{1/2} \right) \\
& \leq  & \frac{1}{6}  \E \left( Y_*^{p} \right) +  \frac{3k_p^2 }{2} \E \left( \int_0^T  |Y_{s}|^{p-2} \|\psi_s\|^2_{L^2_\mu} \1_{Y_s\neq 0} ds \right) ,
\end{eqnarray*}
and for $[\Theta]$ since $p>1$
\begin{eqnarray*}
k_p \E \left(  [\Theta]_T^{1/2} \right) & \leq & k_p \E\left[ \left( \int_0^T \left( |Y_{s^-}|^2 \vee  |Y_{s^-} + \Delta M_s|^2 \right)^{p-1}  \1_{|Y_{s^-}|\vee |Y_{s^-} + \Delta M_s| \neq 0} d[M]_s \right)^{1/2} \right] \\
& \leq & k_p \E\left[ \left( \sup_{s \in [0,T]} \left( |Y_{s^-}|^2 \vee  |Y_{s^-} + \Delta M_s|^2 \right)^{p/2}\right)^{1/2} \right. \\
&& \qquad \qquad \left. \left( \int_0^T \left( |Y_{s^-}|^2 \vee  |Y_{s^-} + \Delta M_s|^2 \right)^{p/2-1}  \1_{|Y_{s^-}|\vee |Y_{s^-} + \Delta M_s| \neq 0}d[M]_s \right)^{1/2} \right] \\
& \leq  & \frac{1}{6}  \E \left( Y_*^{p} \right) +  \frac{3k_p^2 }{2} \E \left( \int_0^T  |Y_{s^-}|^{p-2}  \1_{|Y_{s^-}| \neq 0} d[M]^c_s \right. \\
&& \qquad \qquad \left. + \sum_{0< s \leq T}  \left( |Y_{s^-}|^2 \vee  |Y_{s^-} + \Delta M_s|^2 \right)^{p/2-1} \1_{|Y_{s^-}|\vee |Y_{s^-} + \Delta M_s| \neq 0} |\Delta M_s|^2 \right).
\end{eqnarray*}
We deduce that there exists a constant depending only on $p$ such that
\begin{eqnarray*}
\E (Y_*^p) & \leq & \kappa_p \E\left( X \right).
\end{eqnarray*}

\noindent {\bf Step 2:} Let us derive now a priori estimates for the martingale part of the BSDE. We use Corollary \ref{coro:ito_form_xp}: 
\begin{eqnarray} \nonumber
&& \E \left( \int_0^T |Z_s|^2 ds \right)^{p/2}  =  \E \left( \int_0^T\1_{Y_s \neq 0} |Z_s|^2 ds\right)^{p/2} \\ \nonumber
&&\quad = \E \left( \int_0^T \left( Y_{s} \right)^{2-p}\left( Y_{s} \right)^{p-2}\1_{Y_s \neq 0}  |Z_s|^2ds \right)^{p/2} \\ \nonumber
&& \quad \leq \E \left[ \left( Y_{*}\right)^{p(2-p)/2} \left(\int_0^T \left( Y_{s} \right)^{p-2}\1_{Y_s \neq 0}  |Z_s|^2 ds \right)^{p/2}\right]\\ \nonumber
&& \quad \leq \left\{\E \left[ \left( Y_{*} \right)^{p}\right]\right\}^{(2-p)/2}  \left\{\E \int_0^T \left( Y_{s}\right)^{p-2} \1_{Y_s \neq 0} |Z_s|^2 ds \right\}^{p/2} \\ \label{eq:trick_control_mart_part}
&& \quad  \leq \frac{2-p}{2} \E \left[ \left( Y_{*} \right)^{p}\right] + \frac{p}{2} \E \int_0^T \left( Y_{s} \right)^{p-2} \1_{Y_s \neq 0} |Z_s|^2 ds
\end{eqnarray}
where we have used H\"older's and Young's inequality with $ \frac{2-p}{2} + \frac{p}{2}=1$. With Inequality \eqref{eq:Lp_estim_p_leq_2} we deduce:
\begin{eqnarray*}
&& \E \left( \int_0^T |Z_s|^2 ds \right)^{p/2}  \leq \frac{2-p}{2}  \E \left( |Y_{*}|^{p} \right) + \frac{p}{2}\E \int_0^T |Y_{s}|^{p-2} \1_{Y_s\neq 0} |Z_s|^2 ds \leq \wtil \kappa_p \E(X).
\end{eqnarray*}
The same argument can be used to control $[M]^c$. For the pure-jump part of $[M]$ we have using the function $u_\eps$ defined in the proof of Lemma \ref{lem:Ito_formula_x_p}:
\begin{eqnarray*}
&& \E \left( \sum_{0<s\leq T} |\Delta M_s|^2 \right)^{p/2} \\
&&\quad = \E \left( \sum_{0<s\leq T} \left( u_\eps(|Y_{s^-}| \vee  |Y_{s^-} + \Delta M_s|) \right)^{2-p}\left(u_\eps( |Y_{s^-}| \vee  |Y_{s^-} + \Delta M_s| )\right)^{p-2} |\Delta M_s|^2 \right)^{p/2} \\
&& \quad \leq \E \left[ \left( u_\eps(Y_{*})\right)^{p(2-p)/2} \left(\sum_{0<s\leq T} \left( u_\eps(|Y_{s^-}| \vee  |Y_{s^-} + \Delta M_s|) \right)^{p-2} |\Delta M_s|^2 \right)^{p/2}\right]\\
&& \quad \leq \left\{\E \left[ \left( u_\eps(Y_{*}) \right)^{p}\right]\right\}^{(2-p)/2} \\
&&\qquad \qquad \times \left\{\E\left(  \sum_{0< s \leq T} \left( u_\eps(|Y_{s^-}| \vee  |Y_{s^-} + \Delta M_s|) \right)^{p-2} |\Delta M_s|^2 \right) \right\}^{p/2} \\
&& \quad \leq \frac{2-p}{2} \E \left[ \left( u_\eps(Y_{*}) \right)^{p}\right] + \frac{p}{2}\E\left(  \sum_{0< s \leq T} \left( u_\eps(|Y_{s^-}| \vee  |Y_{s^-} + \Delta M_s| )\right)^{p-2} |\Delta M_s|^2 \right)
\end{eqnarray*}
Let $\eps$ go to zero. We use a convergence result, which is a direct consequence of the proof of Lemma \ref{lem:control_jumps_part}:
\begin{eqnarray*}
&& \E \left(  \sum_{0<s\leq T} |\Delta M_s|^2  \right)^{p/2} \\
&& \quad \leq \frac{2-p}{2} \E \left( |Y_{*}|^{p} \right) + \frac{p}{2}\E\left( \sum_{0\leq s < T} \left( |Y_{s^-}| \vee  |Y_{s^-} + \Delta M_s| \right)^{p-2}  \1_{|Y_{s^-}|\vee |Y_{s^-} + \Delta M_s| \neq 0}|\Delta M_s|^2 \right)\\
&& \quad \leq \wtil \kappa_p \E(X).
\end{eqnarray*}
The same argument shows that 
\begin{eqnarray*}
\E \left( \int_0^T \int_\cU |\psi_s(u)|^2 \pi(du,ds) \right)^{p/2} & \leq & \wtil \kappa_p \E(X). 
\end{eqnarray*}
For the estimate of 
$$\E \left( \int_0^T \int_\cU |\psi_s(u)|^2 \mu(du) ds \right)^{p/2} $$
we follow the same scheme with a localization argument and Equality \eqref{eq:local_mart_jump} to obtain that 
$$\lim_{\eps \to 0} \E \int_{0}^T\int_\cU  \left( u_\eps( |Y_{s^-}|\right)^{p-2}  |\psi_s(u)|^2\mu(du) ds =  \E \int_{0}^T\int_\cU   |Y_{s^-}|^{p-2} \1_{Y_{s^-}\neq 0} |\psi_s(u)|^2\mu(du) ds.$$

\noindent {\bf Step 3:} Now we prove the wanted estimate. Recall that we have found a constant $\hat \kappa_p$ such that 
 \begin{eqnarray*}
 &&\E\left[  |Y_*|^p +  \left( \int_0^T Z_s^2 ds \right)^{p/2} + [M]_T^{p/2} \right.\\
  &&\qquad \left. + \left( \int_0^T \int_\cU |\psi_s(u)|^2 \mu(du) ds \right)^{p/2} + \left( \int_0^T \int_\cU |\psi_s(u)|^2 \pi(du,ds) \right)^{p/2}\right] \leq \hat \kappa_p \E(X)
  \end{eqnarray*}
where 
$$X =  |\xi|^p +   p \int_0^T |Y_s|^{p-1} f_s ds.$$
But Young's inequality leads to
$$p \hat \kappa_p \int_0^T |Y_s|^{p-1} f_s ds\leq p \hat \kappa_p |Y_*|^{p-1} \int_0^T f_s ds \leq \frac{1}{2} \E \left( |Y_*|^p\right) + d_p \left(  \int_0^T f_s ds\right)^p.$$
Therefore we have proved that for $a$ such that $\al + \frac{3K^2}{(p-1)} \leq a$, then 
\begin{eqnarray*}
&&\E\left[ \left(  \sup_{t\in [0,T]} e^{at} |Y_t|^p \right)+  \left( \int_0^T e^{2as} Z_s^2 ds \right)^{p/2}  + \left( \int_0^T \int_\cU e^{2as} |\psi_s(u)|^2 \pi(du,ds) \right)^{p/2}\right. \\
&& \qquad \left.+ \left( \int_0^T e^{2as} d[M]_s\right)^{p/2} \right] \leq C \E \left[ e^{aT} |\xi|^p+ \left( \int_0^T e^{ar} f_r dr \right)^p\right]
\end{eqnarray*}
where $C$ just depends on $p$. This gives the desired estimate. 
\end{proof}

\begin{Thm} \label{thm:exist_uniq_sol_p_leq_2}
Under Assumptions ${\bf (H_{ex})}$ and (H5), there exists a unique solution $(Y,Z,\psi,M)$ in $\cE^p(0,T)$ to the BSDE \eqref{eq:gene_BSDE}. Moreover for some constant $C=C_{p,K,T}$
\begin{eqnarray*}
&& \E \left[\sup_{t\in[0,T]}   |Y_t|^p +  \left( \int_0^T  |Z_t|^2 dt \right)^{p/2} +  \left(  \int_0^T  \int_{\cU} |\psi_s(u)|^2 \pi(du, ds) \right)^{p/2} + \left(  [ M ]_T \right)^{p/2}\right] \\
&& \qquad \qquad \leq C \E \left[|\xi|^p  + \left( \int_0^T |f(r,0,0,0)| dr \right)^p \right].
\end{eqnarray*}
\end{Thm}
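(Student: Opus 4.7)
The theorem concerns the remaining case $p\in(1,2)$ (Proposition \ref{prop:Lp_estimates} handled $p\ge 2$). The natural scheme is to combine the $L^2$-existence result of Theorem \ref{thm:exist_uniq_sol_gene_BSDE} with the a priori estimate of Proposition \ref{prop:apriori_lp}, the latter being custom-made exactly for this range of $p$. The recurring technical step is to recast every BSDE-for-a-difference as a standalone BSDE whose generator satisfies condition (C) with an explicit process $f_s$.

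\emph{Final estimate and uniqueness.} Given any $\cE^p$-solution $(Y,Z,\psi,M)$, splitting
\[
\langle \check{Y}_s, f(s,Y_s,Z_s,\psi_s)\rangle = \langle \check{Y}_s, f(s,Y_s,Z_s,\psi_s)-f(s,0,Z_s,\psi_s)\rangle + \langle\check{Y}_s, f(s,0,Z_s,\psi_s)-f(s,0,0,0)\rangle + \langle\check{Y}_s, f(s,0,0,0)\rangle
\]
and using (H1)--(H3) shows condition (C) with $f_s=|f(s,0,0,0)|$ and $\alpha=0$. Proposition \ref{prop:apriori_lp} then delivers the announced $L^p$-bound. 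For uniqueness, given two solutions the difference $(\hat Y,\hat Z,\hat\psi,\hat M)$ solves a BSDE whose generator $G(s,y,z,\psi)=f(s,Y'_s+y,Z'_s+z,\psi'_s+\psi)-f(s,Y'_s,Z'_s,\psi'_s)$ satisfies condition (C) with $f_s\equiv 0$ by the same decomposition; Proposition \ref{prop:apriori_lp} gives $\hat Y\equiv 0$, and then $\hat Z,\hat\psi,\hat M$ vanish from the orthogonality of the decomposition in Lemma \ref{lem:decomp_loc_mart}.

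\emph{Existence by approximation.} Since the bounded-data theory of Theorem \ref{thm:exist_uniq_sol_gene_BSDE} produces solutions in every $\cE^q$, I truncate: set $\xi^n=\xi\1_{|\xi|\le n}$ and
\[
f^n(s,y,z,\psi)=f(s,y,z,\psi)-f(s,0,0,0)+f(s,0,0,0)\1_{|f(s,0,0,0)|\le n},
\]
so that $\xi^n$ and $f^n(\cdot,0,0,0)$ are bounded and $f^n$ still satisfies $\mathbf{(H_{ex})}$. Theorem \ref{thm:exist_uniq_sol_gene_BSDE} yields unique solutions $(Y^n,Z^n,\psi^n,M^n)\in\cE^2(0,T)\subset\cE^p(0,T)$. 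For the difference BSDE with indices $m,n$, the associated generator (translated as above around $(Y^n,Z^n,\psi^n)$) obeys condition (C) with
\[
f_s=|f(s,0,0,0)|\1_{|f(s,0,0,0)|>m\wedge n}.
\]
Applying Proposition \ref{prop:apriori_lp} and Jensen's inequality $\bigl(\int_0^T f_s\,ds\bigr)^p\le T^{p-1}\int_0^T f_s^p\,ds$ gives
\[
\|(Y^m-Y^n,Z^m-Z^n,\psi^m-\psi^n,M^m-M^n)\|_{\cE^p(0,T)}^{p} \le C\,\E\Bigl[|\xi^m-\xi^n|^p + T^{p-1}\!\int_0^T\! |f(s,0,0,0)|^p\1_{|f|>m\wedge n}\,ds\Bigr],
\]
and the right-hand side vanishes by dominated convergence under (H5). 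Thus $(Y^n,Z^n,\psi^n,M^n)$ is Cauchy in the Banach space $\cE^p(0,T)$, with a limit $(Y,Z,\psi,M)$. The $L^p$-convergence of $(Y^n,Z^n,\psi^n)$ together with (H1)--(H3) yields $L^1$-convergence of the drift on $\Omega\times[0,T]$, which allows passage to the limit in BSDE \eqref{eq:gene_BSDE} and identifies the limit as a solution.

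\emph{Main obstacle.} The real difficulty has already been absorbed into Proposition \ref{prop:apriori_lp}: for $p<2$, Burkholder--Davis--Gundy cannot be used as in the $p\ge 2$ case and the $L^{p/2}$-norm of the predictable projection of the jump part is not dominated by that of the quadratic variation (cf.\ \eqref{eq:pred_quad_var} and \cite{leng:lepi:prat:80}). Given that proposition, the only care needed here is to verify condition (C) for each auxiliary BSDE encountered in the approximation and to track $f_s$ explicitly so that the right-hand side of the a priori estimate tends to zero. Once that bookkeeping is done, both uniqueness and the Cauchy property are straightforward consequences.
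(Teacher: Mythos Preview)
Your proof is correct and follows essentially the same approach as the paper: truncate the data, invoke the $L^2$-existence of Theorem \ref{thm:exist_uniq_sol_gene_BSDE}, verify condition (C) for the difference BSDE, and apply Proposition \ref{prop:apriori_lp} to obtain the Cauchy property in $\cE^p(0,T)$. The only cosmetic difference is that the paper truncates with $q_n(x)=xn/(|x|\vee n)$ rather than indicator cutoffs, and it writes the resulting $f_s$ for the difference as $|q_m(f(s,0,0,0))-q_n(f(s,0,0,0))|$; your explicit treatment of uniqueness and of the final estimate via condition (C) with $f_s=|f(s,0,0,0)|$ is a welcome addition that the paper leaves implicit.
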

\begin{proof}
As for Theorem \ref{thm:exist_uniq_sol_gene_BSDE}, we follow the proof of the second step of Theorem 4.2 in \cite{bria:dely:hu:03}. We truncate $\xi$ and $f(t,0,0,0)$ to obtain $\xi_n$ and $f_n$ with $\|\xi_n\|_\infty \leq n$ and $|f_n(t,0,0,0)|\leq n$:
$$\xi_n = q_n(\xi), \qquad f_n(t,y,z,\psi) = f(t,y,z,\psi) - f(t,0,0,0) + q_n(f(t,0,0,0)),$$
with $q_n(x) = xn/(|x|\vee n)$. 
Thanks to Theorem \ref{thm:exist_uniq_sol_gene_BSDE}, we have a unique solution $(Y^n,Z^n,\psi^n,M^n)$ in $\cE^2$, and thus in $\cE^p$ for any $p> 1$. Now for any $m$ and $n$:
\begin{eqnarray*}
&& f_{m}(t,Y^{m}_t,Z^{m}_t,\psi^{m}_t) - f_{n}(t,Y^{n}_t,Z^{n}_t,\psi^{n}_t) = f(t,Y^{m}_t,Z^{m}_t,\psi^{m}_t) - f(t,Y^{n}_t,Z^{m}_t,\psi^{n}_t) \\
&&  \quad +  f(t,Y^{n}_t,Z^{m}_t,\psi^{m}_t) - f(t,Y^{n}_t,Z^{n}_t,\psi^{m}_t) + f(t,Y^{n}_t,Z^{n}_t,\psi^{m}_t) - f(t,Y^{n}_t,Z^{n}_t,\psi^{n}_t)\\
& &\quad + q_m(f(t,0,0,0)) - q_n(f(t,0,0,0))
\end{eqnarray*}
Hence
\begin{eqnarray*}
&& \langle \frac{(Y^m_t-Y^n_t)}{|Y^m_t-Y^n_t|} \1_{Y^m_t-Y^n_t\neq 0},f_{m}(t,Y^{m}_t,Z^{m}_t,\psi^{m}_t) - f_{n}(t,Y^{n}_t,Z^{n}_t,\psi^{n}_t) \rangle \\
&&\quad  \leq |q_m(f(t,0,0,0)) - q_n(f(t,0,0,0))|+ K|Z^m_t-Z^n_t| + K \|\psi^m_t-\psi^n_t\|_{L^2_\mu} .
\end{eqnarray*}
This inequality is Assumption (C) in Proposition \ref{prop:apriori_lp} with $\al=0$. This proposition shows that
\begin{eqnarray*}
&&\E \left[\sup_{t\in[0,T]}  |Y^{m}_t-Y^n_t|^p + \left( \int_0^T |Z^{m}_s-Z^n_s|^2 ds \right)^{p/2}+ \left( [ M^{m}-M^n  ]_T \right)^{p/2} \right. \\
&&\quad  \left.+  \left(  \int_0^T  \int_{\cU} |\psi^{m}_s(u)-\psi^n_s(u)|^2 \pi(du, ds) \right)^{p/2}+  \left(  \int_0^T  \int_{\cU} |\psi^{m}_s(u)-\psi^n_s(u)|^2 \mu(du) ds \right)^{p/2} \right] \\
&& \qquad \qquad \leq C \E \left[  |\xi_{m}-\xi_n|^p + \left( \int_0^T |q_{m}(f(r,0,0,0))-q_{n}(f(r,0,0,0))| dr \right)^p \right].
\end{eqnarray*}
Thus $(Y^n,Z^n,\psi^n,M^n)$ is a Cauchy sequence in $\cE^p$ and the conclusion follows.
\end{proof}

\section{Comparison Principle}

In this section we give some results which are derived from the previous sections. In the first part we assume that $d=1$ and aim at comparing two solutions $Y^1$ and $Y^2$ of the BSDE \eqref{eq:gene_BSDE} with coefficients $(\xi^1,f^1)$ and $(\xi^2,f^2)$. As in the papers of Barles et al. \cite{barl:buck:pard:97}, Royer \cite{royer:06}, Situ \cite{situ:05} or Quenez \& Sulem \cite{quen:sule:13}, we have to restrict the dependence of $f$ w.r.t.\ $\psi$. Some monotonicity w.r.t.\ $\psi$ is necessary. The following set of conditions will be denoted by ${\bf (H_{comp})}$. The three conditions (H1) to (H3) hold but assumption (H3) is replaced by:
\begin{enumerate}
\item[(H3')] $f$ is Lipschitz continuous w.r.t.\ $z$ with constant $K$ and for each $(y,z,\psi,\phi) \in \R\times \R^k \times (L^2_\mu)^2$, there exists a predictable process $\kappa = \kappa^{y,z,\psi,\phi} : \Omega \times [0,T] \times \cU \to \R$ such that:
$$f(t,y,z,\psi) - f(t,y,z,\phi) \leq  \int_\cU (\psi(u)-\phi(u)) \kappa^{y,z,\psi,\phi}_t(u) \mu(du)$$
with $\P\otimes m \otimes \mu$-a.e.  for any $(y,z,\psi,\psi')$,
\begin{itemize}
\item $-1 \leq \kappa^{y,z,\psi,\phi}_t(u)$
\item $|\kappa^{y,z,\psi,\phi}_t(u)|\leq \vartheta(u)$ where $\vartheta \in L^2_\mu$.
\end{itemize}
\end{enumerate}
Note that ${\bf (H_{comp})}$ implies ${\bf (H_{ex})}$. Indeed if (H3') is true we also have:
$$f(t,y,z,\psi) - f(t,y,z,\phi) \geq  \int_\cU (\psi(u)-\phi(u)) \kappa^{y,z,\phi,\psi}_t(u) \mu(du)$$
by changing the role of $\psi$ and $\phi$ in $\kappa$ and thus
$$|f(t,y,z,\psi) - f(t,y,z,\phi) | \leq \|\vartheta\|_{L^2_\mu}  \|\psi-\phi\|_{L^2_\mu}.$$

We follow the line of argument of \cite{quen:sule:13}. In particular we consider the Dol\'eans-Dade exponential local martingale: Let $\alpha$, $\beta$ be predictable processes integrable w.r.t.\ $dt$ and $dW_t$, respectively. Let $\gamma$ be a predictable process defined on $[0, T]\times \Omega \times \R$ integrable w.r.t.\ $\tilde \pi(du,ds)$. For any $0\leq t \leq s \leq T$ let $E$ be the solution of
$$dE_{t,s} = E_{t,s^-} \left[ \beta_s dW_s + \int_\cU \gamma_s(u)\tpi(du,ds)  \right],\qquad E_{t,t}=1,$$
and let $\Gamma$ be the solution of
\begin{equation} \label{eq:doleans_dade_exp}
d\Gamma_{t,s} = \Gamma_{t,s^-} \left[ \alpha_s ds + \beta_s dW_s + \int_\cU \gamma_s(u)\tpi(du,ds)  \right],\qquad \Gamma_{t,t}=1.
\end{equation}
Of course $\Gamma_{t,s} = \exp \left( \int_t^s \alpha_r dr \right) E_{t,s}$ and
$$E_{t,s} = \exp\left( \int_t^s \beta_r dW_r  - \frac{1}{2} \int_t^s \beta^2_r dr \right) \prod_{t<r\leq s} (1+ \gamma_r(\Delta X_r)) e^{-\gamma_r(\Delta X_r)}$$
with $X_t = \int_0^t \int_\cU u \pi(du,ds)$.
\begin{Lemma}
Assume that the processes $|\beta|$ and $\|\gamma\|_{L^2_\mu}$ are bounded and that $\al$ is bounded from above. Let $(Y,Z,\psi,M)$ be the solution of the following linear BSDE:
\begin{eqnarray} \nonumber
Y_t &=& \xi + \int_t^T \left[ f_s + \alpha_s Y_s + \beta_s Z_s + \int_\cU \gamma_s(u) \psi_s(u) \mu(du) \right] ds \\ \label{eq:linear_BSDE}
& - & \int_t^T\int_\cU \psi_s(u) \tpi(du,ds) -\int_t^T Z_sdW_s- \int_t^T dM_s.
\end{eqnarray}
Then $\Gamma$ is $q$-integrable for any $q\geq 2$, and the solution $(Y,Z,\psi,M)$ belongs to $\cE^p(0,T)$ if
$$\E\left( |\xi|^p + \int_0^T |f_s|^p ds \right) < +\infty.$$
Moreover
$$Y_t = \E \left[ \Gamma_{t,T} \xi + \int_t^T \Gamma_{t,s} f_s ds \bigg| \F_t \right].$$
\end{Lemma}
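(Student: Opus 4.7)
I would split the statement into its three components—moment bounds on $\Gamma$, $\cE^p$-regularity of the solution, and the explicit representation—and tackle them in that order.

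\emph{Step 1 ($q$-integrability of $\Gamma$).} Since $\al$ is bounded from above by some constant $a$, one has $|\Gamma_{t,s}| \leq e^{a(s-t)}|E_{t,s}|$, so it suffices to control moments of the Dol\'eans-Dade exponential $E$. Writing
$$E_{t,s}^q = \exp\!\left(q\!\int_t^s\!\beta_r dW_r - \frac{q^2}{2}\!\int_t^s\!\beta_r^2 dr\right)\exp\!\left(\frac{q(q-1)}{2}\!\int_t^s\!\beta_r^2 dr\right)\cdot J_{t,s}^q,$$
where $J^q$ gathers the jump contributions, one isolates a true exponential martingale (Novikov applies since $|\beta|$ is bounded) multiplied by factors that are deterministically bounded on $[0,T]$ via the boundedness of $\|\gamma\|_{\bL^2_\mu}$ and the polynomial bound $(1+x)^q - 1 - qx \leq C_q x^2$ on the relevant range of $x$. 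Taking expectations gives $\E|E_{t,s}|^q \leq C_{q,T}$, and in fact the same argument bounds $\E\sup_{s\in[t,T]} |\Gamma_{t,s}|^q$.

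\emph{Step 2 ($\cE^p$-regularity).} The driver
$$g(s,y,z,\psi) := f_s + \al_s y + \beta_s z + \!\int_\cU \gamma_s(u)\psi(u)\mu(du)$$
is affine in $(y,z,\psi)$: Cauchy-Schwarz gives Lipschitz continuity in $\psi$ with constant $\sup_s\|\gamma_s\|_{\bL^2_\mu}$, the bound on $|\beta|$ gives Lipschitz continuity in $z$, and the upper bound on $\al$ makes it monotone in $y$. Hence $g$ satisfies ${\bf (H_{ex})}$, and under the $L^p$-integrability hypothesis on $\xi$ and $f$, Theorems \ref{thm:exist_uniq_sol_gene_BSDE} and \ref{thm:exist_uniq_sol_p_leq_2} deliver a unique $(Y,Z,\psi,M) \in \cE^p(0,T)$.

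\emph{Step 3 (representation).} Apply the It\^o product formula to the semimartingale $s\mapsto \Gamma_{t,s}Y_s$ on $[t,T]$ using \eqref{eq:doleans_dade_exp} and \eqref{eq:linear_BSDE}. The continuous covariation contributes $\Gamma_{t,s^-}\beta_s Z_s\,ds$; the jumps of $\Gamma$ and $Y$ both come from $\pi$, so their sum (after compensation) contributes $\Gamma_{t,s^-}\!\int_\cU \gamma_s(u)\psi_s(u)\mu(du)\,ds$ plus a $\tpi$-martingale; covariations with $M$ vanish since $M\in \cM_{loc}$ is orthogonal to both $W$ and $\tpi$. Every drift term except $-\Gamma_{t,s^-}f_s\,ds$ cancels exactly, so
$$d(\Gamma_{t,s}Y_s) = -\Gamma_{t,s^-}f_s\,ds + dN_s,$$
where $N$ is a local martingale. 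Integrating from $t$ to $T$ and rearranging,
$$Y_t = \Gamma_{t,T}\xi + \int_t^T \Gamma_{t,s}f_s\,ds - (N_T - N_t).$$
Applying BDG to each of the three martingale integrands in $N$, and combining with H\"older's inequality (conjugate exponents $p$ and $p/(p-1)$) using the $\cE^p$-bounds on $(Y,Z,\psi,M)$ and the $L^q$-bound on $\sup_s\Gamma_{t,s}$ from Step 1, one obtains $\E\sup_{s\in[t,T]}|N_s| < +\infty$, so $N$ is a genuine martingale on $[t,T]$ and taking $\E[\,\cdot\mid \F_t]$ yields the claimed formula.

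\emph{Main obstacle.} The subtle point is Step 1: $L^2_\mu$-boundedness of $\gamma$ alone does not control the compensator of $J^q$, because $(1+\gamma)^q - 1 - q\gamma$ behaves like $|\gamma|^q$ for large $|\gamma|$. One must exploit a pointwise bound on $\gamma$---implicit in the intended application, where $\gamma = \kappa$ satisfies $-1 \leq \kappa \leq \vth$ with $\vth \in \bL^2_\mu$ bounded by (H3')---to ensure the jump compensator is finite; the remaining BDG/H\"older bookkeeping in Step 3 is then routine.
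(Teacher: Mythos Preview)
Your proposal is correct and follows essentially the same route as the paper: moment bounds on $\Gamma$, then integration by parts on $\Gamma_{t,\cdot}Y$ to identify a (local) martingale, then conditioning. The paper simply cites \cite{quen:sule:13}, Proposition A.1, for the $q$-integrability of $\Gamma$ and uses Doob's inequality for the supremum, and for the representation it applies integration by parts to $\phi_s = \Gamma_{t,s}Y_s + \int_t^s \Gamma_{t,r}f_r\,dr$ (so that $\phi$ itself is the martingale) and asserts the martingale property ``from the assumptions made on the coefficients'' without spelling out the BDG/H\"older check you perform. Your Step~2, invoking the existence theorems to place the solution in $\cE^p$, is not written out in the paper's proof either; it is taken as an immediate consequence of the earlier results.

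Your ``main obstacle'' paragraph is a fair observation: the lemma as stated only assumes $\|\gamma\|_{\bL^2_\mu}$ bounded, which is indeed not enough by itself to control $\E[(1+\gamma)^q-1-q\gamma]$ for $q>2$. The paper sidesteps this by deferring to \cite{quen:sule:13}, whose hypotheses in fact match the pointwise control $-1\leq \gamma$, $|\gamma|\leq \vartheta\in \bL^2_\mu$ coming from (H3'); you are right that this is what is really being used in the application to the comparison principle.
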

\begin{proof}
The integrability of $\Gamma_{t,s}$ is given in \cite{quen:sule:13}, Proposition A.1 and by Doob's inequality:
$$\E \sup_{s\in [t,T]} |\Gamma_{t,s}|^q \leq C_q \sup_{s\in [t,T]} \E |\Gamma_{t,s}|^q\leq C_q \E |\Gamma_{t,T}|^q < +\infty.$$
We follow the arguments of the proof of Theorem 3.4 in \cite{quen:sule:13}. Let $(Y,Z,\psi,M)$ be a solution. For $0\leq t \leq s \leq T$ set
$$\phi_s = Y_s\Gamma_{t,s} + \int_t^s  \Gamma_{t,r} f_r dr.$$
Then by integration by parts we obtain
\begin{eqnarray*}
d\phi_s  & = & \Gamma_{t,s^-} dY_s + Y_{s^-} d\Gamma_{t,s} + d [ \Gamma_{t,.} , Y ]_s +\Gamma_{t,s} f_s ds \\
& = &  \Gamma_{t,s^-}  \left( -f_s - \alpha_s Y_s - \beta_s Z_s - \int_\cU \gamma_s(u) \psi_s(u) \mu(du) \right) ds \\
&& \quad + \Gamma_{t,s^-} \int_\cU \psi_s(u) \tpi(du,ds)  + \Gamma_{t,s^-} Z_s dW_s+ \Gamma_{t,s^-}  dM_s. \\
& + &  Y_{s^-} \Gamma_{t,s^-}  \left( \alpha_s ds + \beta_s dW_s + \int_\cU \gamma_s(u)\tpi(du,ds)  \right) \\
& + & \Gamma_{t,s^-} \beta_s Z_s ds + \Gamma_{t,s^-} \int_\cU \psi_s(u)\gamma_s(u) \pi(du,ds)+\Gamma_{t,s} f_s ds \\
& = &  \Gamma_{t,s^-} \int_\cU( \psi_s(u) + Y_{s^-} \gamma_s(u)+\psi_s(u)\gamma_s(u)) \tpi(du,ds)  \\
& + & \Gamma_{t,s^-} (Z_s+Y_s \beta_s) dW_s+ \Gamma_{t,s^-}  dM_s.
\end{eqnarray*}
From the assumptions made on the coefficients, we obtain that $\phi$ is a martingale and thus
$$\phi_t = Y_t  = \E \left[ \phi_T \bigg| \F_t \right] =\E \left[ Y_T\Gamma_{t,T} + \int_t^T  \Gamma_{t,r} f_r dr \bigg| \F_t \right]  .$$

\end{proof}

The next proposition is a modification of Theorem 4.2 in \cite{quen:sule:13} (see also Theorem 252 in \cite{situ:05}).
\begin{Prop} \label{prop:comp_sol_gene_BSDE}
We consider a generator $f_1$ satisfying ${\bf (H_{ex})}$ and we ask $f_2$ to verify ${\bf (H_{comp})}$. Let $\xi^1$ and $\xi^2$ be two terminal conditions for BSDEs \eqref{eq:gene_BSDE} driven respectively by $f_1$ and $f_2$. Denote by $(Y^1,Z^1,\psi^1,M^1)$ and $(Y^2,Z^2,\psi^2,M^2)$ the respective solutions in some space $\cE^p(0,T)$ with $p>1$. If $\xi^1 \leq \xi^2$ and $f_1(t,Y^1_t,Z^1_t,\psi^1_t) \leq f_2(t,Y^1_t,Z^1_t,\psi^1_t)$, then a.s. for any $t \in [0,T]$, $Y^1_t \leq Y^2_t$.
\end{Prop}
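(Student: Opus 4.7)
The plan is to show $\bar Y := Y^2 - Y^1 \geq 0$ by casting $\bar Y$ as the solution of a linear BSDE of the form \eqref{eq:linear_BSDE} whose terminal value and source are both nonnegative, and then invoking the explicit representation formula from the preceding lemma.

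First I would linearise the generator gap by telescoping through $(Y^1,Z^2,\psi^2)$, $(Y^1,Z^1,\psi^2)$, $(Y^1,Z^1,\psi^1)$. The $y$-piece can be written as $\alpha_s \bar Y_s$ with
$$\alpha_s = \frac{f_2(s,Y^2_s,Z^2_s,\psi^2_s) - f_2(s,Y^1_s,Z^2_s,\psi^2_s)}{Y^2_s - Y^1_s}\1_{Y^2_s \neq Y^1_s};$$
a direct case-check in dimension $d=1$ using (H1) with $\alpha = 0$ shows that $\alpha_s \leq 0$ \emph{irrespective} of the sign of $\bar Y_s$, so $\alpha$ is bounded from above. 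The $z$-piece is written as $\beta_s \bar Z_s$ with a predictable, bounded $\R^k$-valued $\beta$ via the classical component-wise Lipschitz linearisation from (H3). For the $\psi$-piece I would apply (H3') to $f_2$ with the \emph{swapped} order of arguments $(\psi,\phi) = (\psi^1_s,\psi^2_s)$; this produces a predictable kernel $\kappa_s(u)$ with $-1 \leq \kappa_s(u) \leq \vartheta(u)$ such that
$$f_2(s,Y^1_s,Z^1_s,\psi^2_s) - f_2(s,Y^1_s,Z^1_s,\psi^1_s) \geq \int_\cU \kappa_s(u)\,\bar\psi_s(u)\,\mu(du),$$
so the $\psi$-piece equals $\int_\cU \kappa_s(u)\bar\psi_s(u)\,\mu(du) + \eta_s$ with $\eta_s \geq 0$. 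The remaining term $\delta f_s := f_2(s,Y^1_s,Z^1_s,\psi^1_s) - f_1(s,Y^1_s,Z^1_s,\psi^1_s) \geq 0$ by assumption.

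Collecting the pieces, $(\bar Y,\bar Z,\bar\psi,\bar M)$ solves the linear BSDE \eqref{eq:linear_BSDE} with coefficients $(\alpha,\beta,\kappa)$ and source $g_s := \eta_s + \delta f_s \geq 0$, and terminal condition $\bar\xi = \xi^2 - \xi^1 \geq 0$. The coefficients meet the hypotheses of the representation lemma: $\alpha \leq 0$, $|\beta|$ is uniformly bounded, $\|\kappa_s\|_{L^2_\mu} \leq \|\vartheta\|_{L^2_\mu}$, and crucially $\kappa_s(u) \geq -1$ ensures that the Dol\'eans-Dade exponential $\Gamma$ in \eqref{eq:doleans_dade_exp} is pathwise nonnegative. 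The $L^p$-integrability of $g$ follows from $(Y^i,Z^i,\psi^i,M^i) \in \cE^p(0,T)$ together with (H3) and the hypothesis on $f_1,f_2$. The lemma then delivers
$$\bar Y_t = \E\left[\Gamma_{t,T}\bar\xi + \int_t^T \Gamma_{t,s}\,g_s\,ds \,\bigg|\, \F_t\right] \geq 0,$$
which is the desired inequality.

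The main obstacle is the $\psi$-linearisation. (H3') provides only a one-sided bound, and the sign of the residual $\eta_s$ reverses if one picks the kernel $\kappa^{Y^1_s,Z^1_s,\psi^2_s,\psi^1_s}$ instead of $\kappa^{Y^1_s,Z^1_s,\psi^1_s,\psi^2_s}$. Choosing the correct ordering is exactly what makes $g$ nonnegative and enables the representation formula to conclude; the opposite choice would leave a sign-indefinite source and prevent the argument. A secondary technical issue is verifying all integrability conditions when $p$ is close to $1$, but this is handled by the a priori $\cE^p$-estimates of the preceding sections and the boundedness of $\alpha$, $\beta$, $\kappa$.
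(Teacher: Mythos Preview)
Your proof is correct and follows the same linearisation-plus-Dol\'eans--Dade-exponential strategy as the paper. The only cosmetic differences are the telescoping order (the paper freezes $(Z^1,\psi^1)$ first, so its $\kappa$ is $\kappa^{Y^2_s,Z^2_s,\psi^1_s,\psi^2_s}$, whereas you freeze $Y^1$ first) and that the paper derives the one-sided bound $\what Y_t \geq \E[\Gamma_{t,T}\what\xi + \int_t^T\Gamma_{t,s}f_s\,ds\,|\,\F_t]$ directly from $h_s\geq f_s+\alpha_s\what Y_s+\beta_s\what Z_s+\int\kappa\what\psi\,\mu(du)$ rather than absorbing the $\psi$-slack $\eta_s$ into a nonnegative source to obtain an equality.
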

\begin{proof}
As usual we set
$$\what Y = Y^2-Y^1, \quad \what Z = Z^2-Z^1, \quad \what \psi = \psi^2 - \psi^1, \quad \what M = M^2 - M^1.$$
Then $(\what Y,\what Z,\what \psi,\what M)$ satisfies:
\begin{equation*}
\what Y_t = \what \xi + \int_t^T h_s ds - \int_t^T\int_\cU \what \psi_s(u) \tpi(du,ds) -\int_t^T \what Z_s dW_s- \int_t^T d\what M_s,
\end{equation*}
where
$$h_s = f_2(Y^2_s,Z^2_s,\psi^2_s) -f_1(Y^1_s,Z^1_s,\psi^1_s).$$
Now we define
\begin{eqnarray*}
f_s & = & f_2(Y^1_s,Z^1_s,\psi^1_s) -f_1(Y^1_s,Z^1_s,\psi^1_s) \\
\alpha_s & = & \frac{f_2(Y^2_s,Z^1_s,\psi^1_s) - f_2(Y^1_s,Z^1_s,\psi^1_s)}{\what Y_s} \1_{\what Y_s \neq 0} \\
\beta_s & = & \frac{f_2(Y^2_s,Z^2_s,\psi^1_s) - f_2(Y^2_s,Z^1_s,\psi^1_s)}{\what Z_s} \1_{\what Z_s \neq 0}
\end{eqnarray*}
then
\begin{eqnarray*}
h_s & = &f_s + \alpha_s \what Y_s + \beta_s \what Z_s +f_2(Y^2_s,Z^2_s,\psi^2_s) - f_2(Y^2_s,Z^2_s,\psi^1_s)  \\
& \geq& f_s + \alpha_s \what Y_s + \beta_s \what Z_s +\int_\cU \kappa_s^{Y^2_s,Z^2_s,\psi^1_s,\psi^2_s} \what \psi_s(u) \mu(du)
\end{eqnarray*}
since $f_2$ satisfies ${\bf (H_{comp})}$. Moreover since $f_2$ is Lipschitz continuous w.r.t.\ $z$, $|\beta|$ is bounded by $K$, whereas from Assumption (H1), $\alpha$ is bounded from above. Moreover, the process $ \kappa_s^{Y^2_s,Z^2_s,\psi^1_s,\psi^2_s}$ is controlled by $\vartheta \in L^2_\mu$. Therefore the process $\Gamma$ defined by \eqref{eq:doleans_dade_exp} is $q$-integrable for any $q\geq 2$ and
$$\what Y_t \geq \E \left[ \Gamma_{t,T} \what \xi + \int_t^T \Gamma_{t,s} f_s ds \bigg| \F_t \right].$$
To conclude recall that since $-1 \leq \kappa^{y,z,\psi,\phi}_t(u)$, $\Gamma_{t,s} \geq 0$ a.s. and by assumptions, $\what \xi \geq 0$ and $f_s \geq 0$. Therefore $\what Y_t \geq 0$ and the conclusion follows.
\end{proof}

Note that the conditions ${\bf (H_{ex})}$ are just imposed on $f^1$ to ensure existence of a solution $(Y^1,Z^1,\psi^1,M^1)$. This proposition gives again uniqueness of the solution.
\begin{Coro} \label{coro:uniq_sol}
Assume ${\bf (H_{comp})}$ and (H4) (resp. (H5)). Then there exists at most one solution $(Y,Z,\psi,M)$ of BSDE $(\xi,f)$ in $\cE^2(0,T)$ (resp. $\cE^p(0,T)$).
\end{Coro}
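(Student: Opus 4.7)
The plan is to apply the comparison principle of Proposition~\ref{prop:comp_sol_gene_BSDE} twice in opposite directions and then to recover the remaining three components from the uniqueness of the martingale decomposition in Lemma~\ref{lem:decomp_loc_mart}. Let $(Y^1,Z^1,\psi^1,M^1)$ and $(Y^2,Z^2,\psi^2,M^2)$ be two solutions of the BSDE with common data $(\xi,f)$ lying in $\cE^2(0,T)$ (resp.\ $\cE^p(0,T)$). Because ${\bf (H_{comp})}$ entails ${\bf (H_{ex})}$, Proposition~\ref{prop:comp_sol_gene_BSDE} applies with $\xi^1=\xi^2=\xi$ and $f_1=f_2=f$: the required inequalities trivially hold as equalities, and therefore $Y^1_t\le Y^2_t$ a.s.\ for every $t\in[0,T]$. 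Exchanging the two solutions yields the reverse inequality, so $Y^1$ and $Y^2$ are indistinguishable.

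Once $Y^1=Y^2$ is known, set $\what Z=Z^1-Z^2$, $\what\psi=\psi^1-\psi^2$, $\what M=M^1-M^2$, and
$$g_s=f(s,Y^1_s,Z^1_s,\psi^1_s)-f(s,Y^1_s,Z^2_s,\psi^2_s).$$
Subtracting the two BSDEs and using $Y^1\equiv Y^2$ yields, for every $t\in[0,T]$,
$$N_t:=\int_0^t\what Z_s\,dW_s+\int_0^t\int_\cU\what\psi_s(u)\,\tpi(du,ds)+\what M_t=\int_0^t g_s\,ds.$$
Thus $N$ is simultaneously a local martingale and an absolutely continuous process of finite variation. By uniqueness of the special semimartingale decomposition (equivalently, because a continuous local martingale of finite variation is constant), $N\equiv 0$ and $g\equiv 0$.

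To separate the three martingale summands I would exploit the orthogonality relations built into $\cM_{loc}$. Computing the bracket of $N\equiv 0$ against the $i$-th Brownian component gives $\int_0^\cdot\what Z^i_s\,ds\equiv 0$, so $\what Z=0$ in $\bH^p(0,T)$. For every $A\in\mathcal B(\cU)$ with $\mu(A)<+\infty$, bracketing $N$ against the martingale $\int_0^\cdot\int_\cU\mathbf 1_A(u)\,\tpi(du,ds)$ and using the orthogonality of $\what M$ and of $W$ to $\tpi$ forces $\int_0^\cdot\int_A\what\psi_s(u)\,\mu(du)\,ds\equiv 0$; varying $A$ yields $\what\psi=0$ in $\bL^p_\pi(0,T)$. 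Reinjecting these two identities into $N=0$ finally gives $\what M=0$.

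The main obstacle I anticipate is not the comparison step, which is immediate from Proposition~\ref{prop:comp_sol_gene_BSDE}, but the identification step: one must argue carefully that the local-martingale and finite-variation parts of $N$ must separate, and then that the Brownian, Poisson and orthogonal pieces of $N$ can be distinguished from one another. It is precisely here that the orthogonality conditions built into the definition of $\cM_{loc}$ and Lemma~\ref{lem:decomp_loc_mart} become indispensable.
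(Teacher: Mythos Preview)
Your proposal is correct and follows exactly the route the paper intends: the corollary is stated immediately after Proposition~\ref{prop:comp_sol_gene_BSDE} with the remark that ``this proposition gives again uniqueness of the solution,'' i.e.\ two applications of the comparison principle force $Y^1=Y^2$, after which the martingale parts are identified by orthogonality. Your detailed identification of $\what Z$, $\what\psi$, $\what M$ is precisely the argument the paper spells out later in the proof of Proposition~\ref{prop:uniq_random_time} (``Thus we have the same martingale parts and by orthogonality, $\what Z=\what\psi=\what M=0$''), so nothing is missing.
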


\section{Random terminal times}\label{rand_term_time}

We come back to the general multidimensional case but we assume that $\tau$ is a stopping time for the filtration $\mathbb F$, which need not be bounded. Assumptions  ${\bf (H_{ex})}$ still hold with a monotonicity constant $\al$ and a Lipschitz constant $K$. (H2) is replaced by:
\begin{equation}\tag{H2''}
\forall r > 0,\ \forall n \in \N,\quad \sup_{|y|\leq r} (|f(t,y,0,0)-f(t,0,0,0)|) \in L^1(\Omega \times (0,n)).
\end{equation}
We assume that $1< p$ and condition (H4) (or (H4')) is replaced by the following one: for some $\rho \in \R$ such that 
$$\rho > \nu =\al+ \frac{K^2}{(p-1)\wedge 1},$$
we have
\begin{equation}\tag{H5'}
\E \left[ e^{p\rho \tau} |\xi|^p + \int_0^\tau e^{p\rho t} |f(t,0,0,0)|^p dt \right] < +\infty.
\end{equation}
The constant $\al$ appears in (H1) and $K$ in (H3). We will need the following additional assumption
\begin{equation}\tag{H6}
\xi \mbox{ is } \F_\tau-\mbox{measurable and } \E\left[ \int_0^\tau e^{p\rho t}|f(t, \xi_t ,\eta_t, \gamma_t)|^p dt \right] < +\infty,
\end{equation}
where $ \xi_t = \E ( \xi|\F_t) $ and $( \eta, \gamma, N)$ are given by the martingale representation:
$$ \xi = \E ( \xi)  + \int_0^\infty  \eta_s dW_s + \int_0^\infty \int_\cU  \gamma_s(u) \tpi(du,ds) +  N_\tau$$
with
$$\E \left[ \left( \int_0^\infty | \eta_s|^2 ds +  \int_0^\infty \int_\cU | \gamma_s(u)|^2 \mu(du) ds +[N]_\tau \right)^{p/2} \right] < +\infty.$$

\begin{Def}
A process $(Y,Z,\psi,M) = (Y_t,Z_t,\psi_t,M_t)_{t\geq 0}$, such that $Y$ is progressively measurable and \cad and $(Z,\psi,M)\in \cD(0,T)\times \cP \times \cM_{loc}$,  with values in $\R^d \times \R^{d\times k} \times \R^d \times \R^d$ is a solution to the BSDE \eqref{eq:gene_BSDE} with random terminal time $\tau$ with data $(\xi;f)$ if on the set $\{t\geq \tau \}$ $Y_t =\xi$ and $Z_t =\psi_t=M_t=0$, $\P$-a.s., $t\mapsto f(t,Y_t,Z_t,\psi_t) \1_{t\leq T}$ belongs to $L^1_{loc}(0,\infty)$  for any $T\geq 0$, $Z$ belongs to $L^2_{loc}(W)$, $\psi$ belongs to $G_{loc}(\pi)$ and, $\P$-a.s., for all $0\leq t \leq T$,
\begin{eqnarray} \nonumber
Y_{t\wedge \tau} & = & Y_{T\wedge \tau} + \int_{t\wedge \tau}^{T\wedge \tau} f(s,Y_s, Z_s,\psi_s) ds -\int_{t\wedge \tau}^{T\wedge \tau} Z_sdW_s \\ \label{eq:gene_BSDE_rand_time}
& -& \int_{t\wedge \tau}^{T\wedge \tau} \int_\cU \psi_s(u) \tpi(du,ds) - \int_{t\wedge \tau}^{T\wedge \tau} dM_s.
\end{eqnarray}
\end{Def}

\begin{Prop} \label{prop:uniq_random_time}
Under conditions (H1), (H2''), (H3), (H5') and (H6), the BSDE \eqref{eq:gene_BSDE_rand_time} has at most one solution satisfying
\begin{eqnarray} \nonumber
&&\E \left[ e^{p\rho (t\wedge \tau)} |Y_{t\wedge \tau}|^p + \int_{0}^{T\wedge \tau}  e^{p\rho s} |Y_{s}|^{p}  ds +  \int_{0}^{T\wedge \tau}  e^{p\rho s} |Y_{s}|^{p-2} |Z_s|^2 \1_{Y_s\neq 0} ds \right] \\ \nonumber
&& + \E \left[ \int_{0}^{T\wedge \tau} e^{p\rho s}  |Y_s|^{p-2} \1_{Y_s \neq 0} \|\psi_s\|^2_{L^2_\mu} ds + \int_{0}^{T\wedge \tau} e^{p\rho s}  |Y_{s}|^{p-2}  \1_{Y_s\neq 0} d[ M]^c_s \right] \\  \nonumber
&& + \E\left[  \sum_{0 < s \leq T\wedge  \tau}e^{p\rho s} |\Delta M_s|^2  \left( |Y_{s^-}|^2 \vee  |Y_{s^-} + \Delta M_s|^2 \right)^{p/2-1} \1_{|Y_{s^-}| \vee  |Y_{s^-} + \Delta M_s| \neq 0} \right] \\ \label{eq:rnd_term_time_apriori_estim}
&& \quad  < +\infty. 
\end{eqnarray}
\end{Prop}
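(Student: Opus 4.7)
The plan is to apply the a priori strategy of Proposition \ref{prop:apriori_lp} to the difference of two solutions. Let $(Y^i,Z^i,\psi^i,M^i)$, $i=1,2$, be two solutions of \eqref{eq:gene_BSDE_rand_time} satisfying \eqref{eq:rnd_term_time_apriori_estim}, and set $(\what Y,\what Z,\what\psi,\what M)=(Y^1-Y^2,Z^1-Z^2,\psi^1-\psi^2,M^1-M^2)$. Both solutions coincide with $\xi$ at $\tau$ and vanish afterwards, so $\what Y_\tau = 0$ and $\what Z,\what\psi,\what M$ all vanish on $\{t>\tau\}$. The driver difference $\what f_s = f(s,Y^1_s,Z^1_s,\psi^1_s) - f(s,Y^2_s,Z^2_s,\psi^2_s)$ satisfies, by (H1) and (H3),
\begin{equation*}
\langle \what Y_s, \what f_s \rangle \leq \al |\what Y_s|^2 + K |\what Y_s| |\what Z_s| + K |\what Y_s| \|\what\psi_s\|_{\bL^2_\mu}.
\end{equation*}

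Next I would apply the generalised It\^o formula of Corollary \ref{coro:ito_form_xp} (together with Remark \ref{rem:p_geq_2} when $p\geq 2$) to $e^{p\rho s}|\what Y_s|^p$ on the stopped interval $[t\wedge\tau\wedge\sigma_n,\,T\wedge\tau\wedge\sigma_n]$, where $(\sigma_n)$ localises the three local martingales driven by $W$, $\tpi$ and $\what M$. Young's inequality splits the cross terms $pK|\what Y_s|^{p-1}|\what Z_s|$ and $pK|\what Y_s|^{p-1}\|\what\psi_s\|_{\bL^2_\mu}$ into a multiple of $|\what Y_s|^p$ and portions of $|\what Y_s|^{p-2}|\what Z_s|^2\1_{\what Y_s\neq 0}$ and $|\what Y_s|^{p-2}\|\what\psi_s\|_{\bL^2_\mu}^2\1_{\what Y_s\neq 0}$. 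The latter are absorbed into the non-negative quadratic contributions produced on the left by Corollary \ref{coro:ito_form_xp} and, for the jumps of $\what M$ and the Poisson integral of $\what\psi$, by Lemma \ref{lem:control_jumps_part}. The hypothesis $\rho > \nu = \al + K^2/((p-1)\wedge 1)$ is chosen precisely so that the remaining coefficient in front of $\int e^{p\rho s}|\what Y_s|^p ds$ is non-positive and this drift disappears. Taking expectations, the three stochastic integrals become centred and one obtains
\begin{equation*}
\E\left[e^{p\rho(t\wedge\tau\wedge\sigma_n)}|\what Y_{t\wedge\tau\wedge\sigma_n}|^p\right] \leq \E\left[e^{p\rho(T\wedge\tau\wedge\sigma_n)}|\what Y_{T\wedge\tau\wedge\sigma_n}|^p\right].
\end{equation*}

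To conclude, I would first send $n\to\infty$, using Fatou on the left and the uniform integrability provided by \eqref{eq:rnd_term_time_apriori_estim} (together with a BDG-type supremum bound reproducing the argument of Proposition \ref{prop:apriori_lp}) on the right, and then pass $T\to\infty$ along a suitable subsequence. Because $\what Y_{T\wedge\tau} = 0$ on $\{T\geq\tau\}$, the right hand side equals $\E[e^{p\rho T}|\what Y_T|^p \1_{T<\tau}]$. The finiteness of $\E\int_0^\tau e^{p\rho s}|\what Y_s|^p ds$ derived from \eqref{eq:rnd_term_time_apriori_estim} via Fubini supplies a sequence $T_k\uparrow\infty$ along which this boundary term tends to zero. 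Hence $\E[e^{p\rho(t\wedge\tau)}|\what Y_{t\wedge\tau}|^p]=0$ for every deterministic $t\geq 0$, so $\what Y\equiv 0$ up to $\tau$. Plugging this back into the BSDE, $\int_{t\wedge\tau}^{T\wedge\tau}\what f_s ds$ is a continuous finite variation process equal to the sum of the three orthogonal martingale increments, which by Lemma \ref{lem:decomp_loc_mart} forces $\what Z = 0$, $\what\psi = 0$ and $\what M = 0$. The main technical hurdle I anticipate is the combined passage $n\to\infty$, $T\to\infty$ on a random horizon in the range $1<p<2$, where Inequality \eqref{eq:pred_quad_var} is unavailable and the jump contributions of both $\what M$ and $\what\psi$ must be kept on the left throughout the estimate via Lemma \ref{lem:control_jumps_part}.
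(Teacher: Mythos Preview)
Your argument is correct and shares the same backbone as the paper: apply Corollary \ref{coro:ito_form_xp} / Remark \ref{rem:p_geq_2} to $e^{p\rho s}|\what Y_s|^p$, control the generator via (H1)--(H3) and Young's inequality, and absorb the resulting $|\what Y|^{p-2}|\what Z|^2$ and $|\what Y|^{p-2}\|\what\psi\|^2_{L^2_\mu}$ pieces into the quadratic terms on the left (the $\psi$--piece requiring the identification of the $\pi(du,ds)$ and $\mu(du)ds$ integrals in expectation, as in \eqref{eq:local_mart_jump}). The difference lies entirely in the passage $T\to\infty$. The paper does not extract a subsequence via Fubini; instead it exploits the strict inequality $\rho>\nu$ by rerunning the estimate with an intermediate exponent $\rho'\in(\nu,\rho)$, which produces
\[
\E\bigl[e^{p\rho'(t\wedge\tau)}|\what Y_{t\wedge\tau}|^p\bigr]\;\leq\; e^{p(\rho'-\rho)T}\,\E\bigl[e^{p\rho(T\wedge\tau)}|\what Y_{T\wedge\tau}|^p\bigr],
\]
and then simply lets $T\to\infty$ using that the right--hand factor is bounded by the first term of \eqref{eq:rnd_term_time_apriori_estim}. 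This $\rho'$--trick avoids both the subsequence argument and any appeal to $\E\int_0^\tau e^{p\rho s}|\what Y_s|^p ds<\infty$ (which your Fubini step needs but which, strictly speaking, requires reading \eqref{eq:rnd_term_time_apriori_estim} as a bound uniform in $T$). Your route is more elementary; the paper's is slightly sharper and yields the full limit directly.
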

\begin{proof}
Assume that there exist two solutions $(Y,Z,\psi,M)$ and $(Y',Z',\psi',M')$ satisfying \eqref{eq:rnd_term_time_apriori_estim} and let
$$\what Y_t = Y_t - Y'_t, \quad \what Z_t = Z_t - Z'_t, \quad \what \psi_t = \psi_t - \psi'_t, \quad \what M_t = M_t - M'_t.$$
Let us denote $c(p)=\frac{p}{2}((p-1)\wedge 1)$. From Corollary \ref{coro:ito_form_xp}, Lemma \ref{lem:control_jumps_part} and Remark \ref{rem:p_geq_2} we have for $0 \leq t \leq T$
\begin{eqnarray} \nonumber
&&e^{p\rho (t\wedge \tau)} |\what Y_{t\wedge \tau}|^p + c(p) \int_{t\wedge \tau}^{T\wedge \tau}  e^{p\rho s} |\what Y_{s}|^{p-2} |\what Z_s|^2 \1_{\what Y_s\neq 0} ds +c(p) \int_{t\wedge \tau}^{T\wedge \tau} e^{p\rho s}  |\what Y_{s}|^{p-2}  \1_{\what Y_s\neq 0} d[ \what M ]^c_s\\ \nonumber
&& + c(p)  \sum_{t\wedge \tau < s \leq T\wedge \tau}e^{p\rho s}  \left( |\what Y_{s^-}|^2 \vee  |\what Y_{s^-} + \Delta \what M_s|^2 \right)^{p/2-1} \1_{|\what Y_{s^-}| \vee  |\what Y_{s^-} + \Delta \what M_s| \neq 0} |\Delta \what M_s|^2  \\ \nonumber
&& + c(p)  \int_{t\wedge \tau}^{T\wedge \tau}\int_\cU e^{p\rho s}  \left( |\what Y_{s^-}|^2 \vee  |\what Y_{s^-} + \what \psi_s(u)|^2 \right)^{p/2-1} \1_{|\what Y_{s^-}| \vee  |\what Y_{s^-} + \what \psi_s(u)| \neq 0}|\what \psi_s(u)|^2\pi(du,ds) \\ \nonumber
&& \leq e^{p\rho (T\wedge \tau)}|\what Y_{T\wedge \tau}|^p \\ \nonumber
&& \quad + p \int_{t\wedge \tau}^{T\wedge \tau} e^{p\rho s}\left( |\what Y_s|^{p-1} \check{\what Y}_s (f(s,Y_s,Z_s,\psi_s)-f(s,Y'_s,Z'_s,\psi'_s)) -\rho |\what Y_s|^p\right) ds  \\ \nonumber
&&\quad -  p \int_{t\wedge \tau}^{T\wedge \tau} e^{p\rho s} |\what Y_s|^{p-1} \check{\what Y}_s \what Z_s dW_s  -  p  \int_{t\wedge \tau}^{T\wedge \tau} e^{p\rho s}|\what Y_{s^-}|^{p-1} \check{\what Y}_{s^-} d\what M_s \\ \label{eq:Ito_p_tau_estimate}
& &\quad  -  p \int_{t\wedge \tau}^{T\wedge \tau}e^{p\rho s} |\what Y_s|^{p-1} \check{\what Y}_s  \int_\cU \what \psi_s(u) \tpi(du,ds) .
\end{eqnarray}
From the assumption on $f$ and Young's inequality we deduce that
\begin{eqnarray*}
&& |\what y|^{p-1} \check{\what y} (f(s,y,z,\psi)-f(s,y',z',\psi') -\rho|\what y|^p \leq  \left( \al  +\frac{K^2}{(p-1)\wedge 1} -\rho \right) |\what y|^p \\
&& \qquad + \frac{(p-1)\wedge 1}{2}  |\what y|^{p-2} \1_{\what y\neq 0}  |\what z|^2   +\frac{(p-1)\wedge 1}{2}  |\what y|^{p-2} \1_{\what y\neq 0} \|\what \psi\|_{L^2} \\
&& \quad \leq \frac{(p-1)\wedge 1}{2}  |\what y|^{p-2} \1_{\what y\neq 0}  |\what z|^2   +\frac{(p-1)\wedge 1}{2}  |\what y|^{p-2} \1_{\what y\neq 0} \|\what \psi\|_{L^2}.
\end{eqnarray*}
Note that from the integrability conditions on the solution every local martingale involved in \eqref{eq:Ito_p_tau_estimate} is a uniformly integrable martingale. Moreover using a localization argument (see Equation \eqref{eq:local_mart_jump}), the two terms:
$$ \int_{t\wedge \tau}^{T\wedge \tau}\int_\cU e^{p\rho s}  \left( |\what Y_{s^-}|^2 \vee  |\what Y_{s^-} + \what \psi_s(u)|^2 \right)^{p/2-1} \1_{|\what Y_{s^-}| \vee  |\what Y_{s^-} + \what \psi_s(u)| \neq 0}|\what \psi_s(u)|^2\pi(du,ds)$$
and 
$$ \int_{t\wedge \tau}^{T\wedge \tau}\int_\cU e^{p\rho s} |\what Y_s|^{p-2} \1_{\what Y_s \neq 0} \|\what \psi_s(u)\|_{L^2}\mu(du) ds$$
have the same expectation. Hence taking the expectation in \eqref{eq:Ito_p_tau_estimate} we obtain:
$$\E e^{p\rho (t\wedge \tau)} |\what Y_{t\wedge \tau}|^p \leq \E e^{p\rho (T\wedge \tau)}|\what Y_{T\wedge \tau}|^p.$$
If we replace $\rho$ by $\rho'$ with $\al  +\frac{K^2}{(p-1)\wedge 1} < \rho'<\rho$ we obtain the same result, and thus we get for any $0\leq t \leq T $
$$\E e^{p\rho' (t\wedge \tau)} |\what Y_{t\wedge \tau}|^p \leq e^{p(\rho'-\rho)T} \E e^{p\rho (T\wedge \tau)}|\what Y_{T\wedge \tau}|^p .$$
We let $T$ go to infinity to obtain $\what Y_t = 0$. 

Therefore $(Y,Z,\psi,M)$ and $(Y',Z',\psi',M')$ satisty BSDE \eqref{eq:gene_BSDE_rand_time} and $Y=Y'$. Thus we have the same martingale parts and by orthogonality, $\what Z=\what \psi =\what M=0$. Uniqueness of the solution is proved. 
\end{proof}

\begin{Prop} \label{prop:exis_Lp_sol_rnd_time}
Under conditions (H1), (H2''), (H3), (H5') and (H6), the BSDE \eqref{eq:gene_BSDE_rand_time} has a solution satisfying 
\begin{eqnarray} \nonumber
&&\E \left[ e^{p\rho (t\wedge \tau)} |Y_{t\wedge \tau}|^p + \int_{0}^{T\wedge \tau}  e^{p\rho s} |Y_{s}|^{p}  ds +  \int_{0}^{T\wedge \tau}  e^{p\rho s} |Y_{s}|^{p-2} |Z_s|^2 \1_{Y_s\neq 0} ds \right] \\ \nonumber
&& + \E \left[ \int_{0}^{T\wedge \tau} e^{p\rho s}  |Y_s|^{p-2} \1_{Y_s \neq 0} \|\psi_s\|^2_{L^2_\mu} ds + \int_{0}^{T\wedge \tau} e^{p\rho s}  |Y_{s}|^{p-2}  \1_{Y_s\neq 0} d[ M]^c_s \right] \\  \nonumber
&& + \E\left[  \sum_{0 < s \leq T\wedge  \tau}e^{p\rho s} |\Delta M_s|^2  \left( |Y_{s^-}|^2 \vee  |Y_{s^-} + \Delta M_s|^2 \right)^{p/2-1} \1_{|Y_{s^-}| \vee  |Y_{s^-} + \Delta M_s| \neq 0} \right] \\ \label{eq:rnd_term_time_apriori_estim_2}
&& \leq C \E\left[  e^{p\rho \tau} |\xi|^p  +  \int_{0}^{\tau} e^{p\rho s} |f(s,0,0,0)|^p ds \right]. 
\end{eqnarray}
Moreover 
\begin{eqnarray} \nonumber
&& \E \left( \int_{0}^{\tau} e^{2\rho s} |Z_s|^2 ds \right)^{p/2}+ \E\left(  \int_{0}^{ \tau} e^{2\rho s}\int_\cU |\psi_s(u)|^2 \mu(du) ds  \right)^{p/2}+ \E \left( \int_{0}^{ \tau} e^{2\rho s} d[M]_s\right)^{p/2}\\ \label{eq:rnd_term_time_apriori_estim_3}
&& \qquad  \leq C \E\left[  e^{p\rho \tau} |\xi|^p  +  \int_{0}^{\tau} e^{p\rho s} |f(s,0,0,0)|^p ds \right]. 
\end{eqnarray}
The constant $C$ depends only on $p$, $K$ and $\alpha$. 
\end{Prop}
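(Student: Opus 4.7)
The plan is to follow the classical approximation scheme: solve the equation on bounded horizons $[0,n]$, then pass to the limit. For each $n\in\N$ set $\tau_n=\tau\wedge n$, let $\xi^n=\E[\xi\mid\F_{\tau_n}]=\xi_{\tau_n}$ and consider on $[0,n]$ the BSDE with deterministic horizon, terminal condition $\xi^n$ and truncated generator $f^n(s,y,z,\psi)=f(s,y,z,\psi)\1_{\{s\leq\tau\}}$. Assumption (H5') (together with $e^{p\rho s}$ being bounded away from $0$ and $\infty$ on $[0,n]$) gives $\E|\xi^n|^p<\infty$ and $\E\int_0^n|f^n(s,0,0,0)|^p ds<\infty$, so Theorem \ref{thm:exist_uniq_sol_p_leq_2} yields a unique $(Y^n,Z^n,\psi^n,M^n)\in\cE^p(0,n)$. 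Because $f^n\equiv 0$ on $(\tau_n,n]$ and $\xi^n$ is $\F_{\tau_n}$-measurable, uniqueness of the BSDE with null driver on $[\tau_n,n]$ forces $Y^n_t\equiv\xi^n$, $Z^n_t=\psi^n_t=0$ and $M^n_t\equiv M^n_{\tau_n}$ on $[\tau_n,n]$. Extending these identities to all $t\geq\tau_n$ produces a globally defined quadruple that solves \eqref{eq:gene_BSDE_rand_time} with $\tau$ replaced by $\tau_n$.

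The next step is to obtain weighted a priori bounds uniformly in $n$. The change of variables $\bar Y_t=e^{\rho t}Y^n_t$, $\bar Z_t=e^{\rho t}Z^n_t$, $\bar\psi_t=e^{\rho t}\psi^n_t$, $d\bar M_t=e^{\rho t}dM^n_t$ turns the equation into one with monotonicity constant $\al-\rho$ and the same Lipschitz constant $K$; the hypothesis $\rho>\nu=\al+K^2/((p-1)\wedge 1)$ is precisely what makes $\bar\al+K^2/((p-1)\wedge 1)\leq 0$, which is the condition under which Proposition \ref{prop:apriori_lp} applies. Running the It\^o-type computation of Corollary \ref{coro:ito_form_xp}, Lemma \ref{lem:control_jumps_part} and Proposition \ref{prop:apriori_lp} on the random interval $[t\wedge\tau_n,T\wedge\tau_n]$, and using the localization \eqref{eq:local_mart_jump} to pass between the predictable projection and the quadratic variation of the Poisson integral, yields an estimate of the form \eqref{eq:rnd_term_time_apriori_estim_2} for each $(Y^n,Z^n,\psi^n,M^n)$ with right-hand side $C\,\E[e^{p\rho\tau_n}|\xi^n|^p+\int_0^{\tau_n}e^{p\rho s}|f(s,0,0,0)|^p ds]$, which is uniformly controlled by (H5').

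The Cauchy property is where assumption (H6) enters. Following \cite{bria:dely:hu:03} I would not compare $Y^n$ and $Y^m$ directly but rather consider $U^n_t=Y^n_t-\xi_{t\wedge\tau_n}$, where $\xi_t=\E[\xi\mid\F_t]$ and $V^n=Z^n-\eta$, $\phi^n=\psi^n-\gamma$, $dN^n=dM^n-dN$ are the gaps with respect to the martingale representation of (H6). On $[0,\tau_n]$ the process $U^n$ satisfies a BSDE with zero terminal condition and generator $g^n(s,u,v,\phi)=f(s,u+\xi_s,v+\eta_s,\phi+\gamma_s)$, which still obeys ${\bf (H_{ex})}$ with the same constants and whose value at $(0,0,0)$ is exactly $f(s,\xi_s,\eta_s,\gamma_s)$, i.e.\ the integrand appearing in (H6). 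Applying the weighted estimate of the previous paragraph to $U^n$ (and, for $n<m$, to $U^m-U^n$ on $[0,\tau_n]$, whose driver contribution on the extra slice $[\tau_n,\tau_m]$ is also $f$ evaluated at processes bounded by the $\cE^p$-norm of $(Y^m,Z^m,\psi^m)$), one obtains
\begin{equation*}
\E\Bigl[\sup_{t\geq 0}e^{p\rho(t\wedge\tau_m)}|Y^m_{t\wedge\tau_m}-Y^n_{t\wedge\tau_n}|^p+\cdots\Bigr]\leq C\,\E\int_{\tau_n}^{\tau}e^{p\rho s}|f(s,\xi_s,\eta_s,\gamma_s)|^p ds,
\end{equation*}
and (H6) with dominated convergence makes the right-hand side vanish as $n\to\infty$. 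The resulting limit $(Y,Z,\psi,M)$ solves \eqref{eq:gene_BSDE_rand_time} and inherits \eqref{eq:rnd_term_time_apriori_estim_2} by lower semicontinuity. Finally, \eqref{eq:rnd_term_time_apriori_estim_3} is deduced from \eqref{eq:rnd_term_time_apriori_estim_2} by the H\"older/BDG trick \eqref{eq:trick_control_mart_part} used in Step 2 of the proof of Theorem \ref{thm:exist_uniq_sol_p_leq_2}, now carried out with the exponential weight.

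The main obstacle I expect is the Cauchy step: on the slice $[\tau_n,\tau_m]$ one must control the driver of the difference equation $Y^m-Y^n$ in the weighted $L^p$ norm, and this control is only available through (H6) once one recognises that $f(s,Y^m_s,Z^m_s,\psi^m_s)$ on this slice can be absorbed into $f(s,\xi_s,\eta_s,\gamma_s)$ up to Lipschitz contributions from $(Y^m-\xi_s,Z^m-\eta_s,\psi^m-\gamma_s)$ which the choice $\rho>\nu$ makes summable. A secondary technical nuisance is that Proposition \ref{prop:apriori_lp} is stated for deterministic horizons with no exponential weight, so its proof must be retraced with the weight $e^{p\rho s}$ and a stopping-time endpoint, re-localising the three local martingales exactly as in \eqref{eq:local_mart_jump}.
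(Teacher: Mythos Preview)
Your proposal is correct and follows essentially the same approximation scheme as the paper: build solutions on $[0,n]$, derive uniform weighted estimates via the It\^o formula of Corollary \ref{coro:ito_form_xp} and Lemma \ref{lem:control_jumps_part}, prove the Cauchy property by exploiting (H6), and recover \eqref{eq:rnd_term_time_apriori_estim_3} from \eqref{eq:rnd_term_time_apriori_estim_2} by the H\"older trick \eqref{eq:trick_control_mart_part}.

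The only genuine difference is bookkeeping in the Cauchy step. The paper does \emph{not} freeze $Y^n$ beyond $\tau_n$ nor introduce the auxiliary shifted process $U^n = Y^n - \xi_{\cdot\wedge\tau_n}$. Instead it extends each approximant for $t\geq n$ by $(Y^n_t,Z^n_t,\psi^n_t,M^n_t)=(\xi_t,\eta_t,\gamma_t,N_t)$, using directly the martingale representation from (H6). With this extension one can compare $\what Y=Y^m-Y^n$ directly: on $[n,m]$ one has $Y^n_\cdot=\xi_\cdot$, so $\what Y$ satisfies a BSDE whose driver is $f(s,Y^m_s,Z^m_s,\psi^m_s)$, and monotonicity plus Lipschitz yields
\[
|\what Y_s|^{p-1}\check{\what Y}_s\, f(s,Y^m_s,Z^m_s,\psi^m_s)\leq \al|\what Y_s|^p+K|\what Y_s|^{p-1}|\what Z_s|+K|\what Y_s|^{p-1}\|\what\psi_s\|_{L^2_\mu}+|\what Y_s|^{p-1}|f(s,\xi_s,\eta_s,\gamma_s)|,
\]
giving the bound $C\,\E\int_{n\wedge\tau}^\tau e^{p\rho s}|f(s,\xi_s,\eta_s,\gamma_s)|^p ds$ straight away; on $[0,n]$ the uniqueness-type estimate propagates this to $t=0$. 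Your shift $U^n$ achieves the same effect, since $Y^m-Y^n$ on $[n,m]$ equals $Y^m-\xi_\cdot$, which is precisely your $U^m$ restricted to that interval. The paper's packaging simply avoids introducing an extra piece of notation and makes the ``extra slice'' computation you flag as the main obstacle entirely transparent.
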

\begin{proof}
We follow the proof of Theorem 4.1 in \cite{pard:99}. For each $n\in \N$ we construct a solution $\{(Y^n,Z^n,\psi^n,M^n), \ t\geq 0\}$ as follows. By Theorem \ref{thm:exist_uniq_sol_p_leq_2}, on the interval $[0,n]$:
\begin{eqnarray} \nonumber
Y^n_{t} & = & \E (\xi | \F_n)  + \int_{t}^{n} \1_{[0,\tau]}(s) f(s,Y^n_s, Z^n_s,\psi^n_s) ds -\int_{t}^{n} Z^n_sdW_s \\ \nonumber
& -& \int_{t}^{n} \int_\cU \psi^n_s(u) \tpi(du,ds) - \int_{t}^{n} dM^n_s.
\end{eqnarray}
And for $t \geq n$ (Assumption (H6)):
$$Y^n_t = \xi_t ,\quad Z^n_t = \eta_t, \quad \psi^n_t(u) = \gamma_t(u), \quad M^n_t = N_t.$$

\begin{itemize}
\item \noindent \textbf{Step 1:} \textit{the sequence $\{(Y^n,Z^n,\psi^n,M^n), \ t\geq 0\}$ satisfies Inequality \eqref{eq:rnd_term_time_apriori_estim_2}.}
\end{itemize}
Using Corollary \ref{coro:ito_form_xp}, Lemma \ref{lem:control_jumps_part} and Remark \ref{rem:p_geq_2} we have for $0 \leq t \leq T \leq n$
\begin{eqnarray*}
&&e^{p\rho (t\wedge \tau)} |Y^n_{t\wedge \tau}|^p + c(p) \int_{t\wedge \tau}^{T\wedge \tau}  e^{p\rho s} |Y^n_{s}|^{p-2} |Z^n_s|^2 \1_{Y^n_s\neq 0} ds +c(p) \int_{t\wedge \tau}^{T\wedge \tau} e^{p\rho s}  |Y^n_{s}|^{p-2}  \1_{Y^n_s\neq 0} d[ M^n ]^c_s\\
&& + c(p)  \sum_{t\wedge \tau < s \leq T\wedge \tau}e^{p\rho s} |\Delta M^n_s|^2  \left( |Y^n_{s^-}|^2 \vee  |Y^n_{s^-} + \Delta M^n_s|^2 \right)^{p/2-1} \1_{|Y^n_{s^-}| \vee  |Y^n_{s^-} + \Delta M^n_s| \neq 0} \\
&& + c(p)  \int_{t\wedge \tau}^{T\wedge \tau}\int_\cU e^{p\rho s}|\psi^n_s(u)|^2  \left( |Y^n_{s^-}|^2 \vee  |Y^n_{s^-} + \psi^n_s(u)|^2 \right)^{p/2-1} \1_{|Y^n_{s^-}| \vee  |Y^n_{s^-} + \psi^n_s(u)|\neq 0}\pi(du,ds) \\
&& \leq e^{p\rho (T\wedge \tau)}|Y^n_{T\wedge \tau}|^p + p \int_{t\wedge \tau}^{T\wedge \tau} e^{p\rho s}\left( |Y^n_s|^{p-1} \check{Y}^n_s f(s,Y^n_s,Z^n_s,\psi^n_s) -\rho |Y^n_s|^p\right) ds  \\
&&\quad -  p \int_{t\wedge \tau}^{T\wedge \tau} e^{p\rho s} |Y^n_s|^{p-1} \check{Y}^n_s Z^n_s dW_s  -  p  \int_{t\wedge \tau}^{T\wedge \tau} e^{p\rho s}|Y^n_{s^-}|^{p-1} \check{Y}^n_{s^-} dM^n_s \\
& &\quad  -  p \int_{t\wedge \tau}^{T\wedge \tau}e^{p\rho s} |Y^n_s|^{p-1} \check{Y}^n_s  \int_\cU \psi^n_s(u) \tpi(du,ds) .
\end{eqnarray*}
Now with Young's inequality and for some $\delta > 0$ sufficiently small
\begin{eqnarray} \nonumber
&& |y|^{p-1}\check{y} f(t,y,z,\psi) \leq \left( \al +\delta +\frac{K^2}{((p-1)\wedge 1-2\delta)} \right) |y|^p \\ \nonumber
&& \qquad + \left( \frac{(p-1)\wedge 1}{2} -\delta \right) |y|^{p-2} \1_{y\neq 0}  |z|^2  + \frac{1}{p} |f(t,0,0,0)|^p \left( \frac{p\delta }{(p-1)\wedge 1}\right)^{1-p} \\ \label{eq:estim_12_10_2014_2}
&& \qquad + \left( \frac{(p-1)\wedge 1}{2} -\delta \right) |y|^{p-2} \1_{y\neq 0} \|\psi\|_{L^2}  .
\end{eqnarray}
We choose $\delta > 0$ such that $ \al +2\delta +\frac{K^2}{((p-1)\wedge 1-2\delta)} \leq \rho$ and we obtain:
\begin{eqnarray*} \nonumber
&&e^{p\rho (t\wedge \tau)} |Y^n_{t\wedge \tau}|^p +p\delta   \int_{t\wedge \tau}^{T\wedge \tau}  e^{p\rho s} |Y^n_{s}|^{p}  ds + p\delta \int_{t\wedge \tau}^{T\wedge \tau}  e^{p\rho s} |Y^n_{s}|^{p-2} |Z^n_s|^2 \1_{Y^n_s\neq 0} ds \\ \nonumber
&& +c(p) \int_{t\wedge \tau}^{T\wedge \tau} e^{p\rho s}  |Y^n_{s}|^{p-2}  \1_{Y^n_s\neq 0} d[ M^n ]^c_s\\ \nonumber
&& +c(p) \sum_{t\wedge \tau < s \leq T\wedge \tau}e^{p\rho s} |\Delta M^n_s|^2  \left( |Y^n_{s^-}|^2 \vee  |Y^n_{s^-} + \Delta M^n_s|^2 \right)^{p/2-1} \1_{|Y^n_{s^-}| \vee  |Y^n_{s^-} + \Delta M^n_s|\neq 0} \\ \nonumber
&& +c(p) \int_{t\wedge \tau}^{T\wedge \tau}\int_\cU e^{p\rho s}|\psi^n_s(u)|^2  \left( |Y^n_{s^-}|^2 \vee  |Y^n_{s^-} + \psi^n_s(u)|^2 \right)^{p/2-1} \1_{|Y^n_{s^-}| \vee  |Y^n_{s^-} + \psi^n_s(u)| \neq 0}\pi(du,ds) \\ \nonumber
&& - p\left( \frac{(p-1)\wedge 1}{2} -\delta \right) \int_{t\wedge \tau}^{T\wedge \tau} e^{p\rho s}  |Y^n_s|^{p-2} \1_{Y^n_s \neq 0} \|\psi^n_s\|_{L^2} ds \\ \nonumber
&& \leq e^{p\rho (T\wedge \tau)} |Y_{T\wedge \tau}|^p  + \int_{t\wedge \tau}^{T\wedge \tau} e^{p\rho s} |f(s,0,0,0)|^p \left( \frac{p\delta }{(p-1)\wedge 1}\right)^{1-p}  ds \\ \nonumber
&& \quad -  p \int_{t\wedge \tau}^{T\wedge \tau} e^{p\rho s}|Y^n_s|^{p-1} \check{Y}^n_s Z^n_s dW_s  \\ \label{eq:estim_12_10_2014}
& &\quad  -  p  \int_{t\wedge \tau}^{T\wedge \tau} e^{p\rho s}|Y^n_{s^-}|^{p-1} \check{Y}^n_{s^-} dM^n_s -  p \int_{t\wedge \tau}^{T\wedge \tau}e^{p\rho s} |Y^n_s|^{p-1} \check{Y}^n_s  \int_\cU \psi^n_s(u) \tpi(du,ds) .
\end{eqnarray*}
Taking the expectation we get
\begin{eqnarray} \nonumber
&&\E \left[ e^{p\rho (t\wedge \tau)} |Y^n_{t\wedge \tau}|^p +p\delta   \int_{0}^{T\wedge \tau}  e^{p\rho s} |Y^n_{s}|^{p}  ds\right] \\ \nonumber
&& +p\delta \E \left[ \int_{0}^{T\wedge \tau} e^{p\rho s}  |Y^n_s|^{p-2} \1_{Y^n_s \neq 0} \|\psi^n_s\|_{L^2} ds + \int_{0}^{T\wedge \tau}  e^{p\rho s} |Y^n_{s}|^{p-2} |Z^n_s|^2 \1_{Y^n_s\neq 0} ds \right] \\ \nonumber
&& +c(p)\E  \int_{0}^{T\wedge \tau} e^{p\rho s}  |Y^n_{s}|^{p-2}  \1_{Y^n_s\neq 0} d[ M^n ]^c_s\\ \nonumber
&& + c(p) \E\left[  \sum_{0 < s \leq T\wedge  \tau}e^{p\rho s} |\Delta M^n_s|^2  \left( |Y^n_{s^-}|^2 \vee  |Y^n_{s^-} + \Delta M^n_s|^2 \right)^{p/2-1} \1_{|Y^n_{s^-}| \vee  |Y^n_{s^-} + \Delta M^n_s| \neq 0} \right] \\ \nonumber
&& + c(p) \E \int_{t\wedge \tau}^{T\wedge \tau}e^{p\rho s} \int_\cU |\psi^n_s(u)|^2  \\ \nonumber
&& \qquad \qquad \left[ \left( |Y^n_{s^-}|^2 \vee  |Y^n_{s^-} + \psi^n_s(u)|^2 \right)^{p/2-1} \1_{|Y^n_{s^-}| \vee  |Y^n_{s^-} + \psi^n_s(u)| \neq 0}\pi(du,ds) - \frac{1}{2}|Y^n_s|^{p-2} \1_{Y^n_s \neq 0} \mu(du)ds\right]  \\
\label{eq:estim_12_10_2014_3}
&& \leq \E\left[  e^{p\rho(T\wedge \tau)} |Y^n_{T\wedge \tau}|^p  + \left( \frac{p\delta }{(p-1)\wedge 1}\right)^{1-p}  \int_{0}^{T\wedge \tau} e^{p\rho s} |f(s,0,0,0)|^p ds \right].
\end{eqnarray}
Using an argument based on Burkholder-Davis-Gundy inequality (see Step 1 in the proof of Proposition \ref{prop:Lp_estimates} ($p\geq 2$) or Proposition \ref{prop:apriori_lp} ($p < 2$)) we can moreover include a $\sup_{t\in [0,n]}$ inside the expectation on the left-hand side.

\begin{itemize}
\item \noindent \textbf{Step 2:} \textit{the sequence $(Y^n)$ converges. }
\end{itemize}
Take $m > n$ and define
$$\what Y_t = Y^m_t - Y^n_t, \quad \what Z_t = Z^m_t - Z^n_t, \quad \what \psi_t = \psi^m_t - \psi^n_t, \quad \what M_t = M^m_t - M^n_t.$$
For $n \leq t \leq m$,
\begin{eqnarray*}
\what Y_t & = & \int_{t\wedge \tau}^{m\wedge \tau} f(s,Y^m_s,Z^m_s,\psi^m_s) ds - \int_{t\wedge \tau}^{m\wedge \tau} \what Z_s dW_s -  \int_{t\wedge \tau}^{m\wedge \tau} \int_\cU \what \psi_s(u) \tpi(du,ds)  \\
&& \quad - \what M_{m\wedge \tau} + \what M_{t\wedge \tau}.
\end{eqnarray*}
Thus for $n \leq t \leq m$,
\begin{eqnarray*}
&&e^{p\rho (t\wedge \tau)} |\what Y_{t\wedge \tau}|^p + c(p) \int_{t\wedge \tau}^{m\wedge \tau}  e^{p\rho s} |\what Y_{s}|^{p-2} |\what Z_s|^2 \1_{\what Y_s\neq 0} ds +c(p) \int_{t\wedge \tau}^{m\wedge \tau} e^{p\rho s}  |\what Y_{s}|^{p-2}  \1_{\what Y_s\neq 0} d[ \what M ]^c_s\\
&& + c(p) \sum_{t\wedge \tau < s \leq m\wedge \tau}e^{p\rho s} |\Delta \what M_s|^2  \left( |\what Y_{s^-}|^2 \vee  |\what Y_{s^-} + \Delta \what M_s|^2 \right)^{p/2-1} \1_{ |\what Y_{s^-}| \vee  |\what Y_{s^-} + \Delta \what M_s| \neq 0} \\
&& + c(p) \int_{t\wedge \tau}^{ m \wedge \tau}\int_\cU e^{p\rho s}|\what \psi_s(u)|^2  \left( |\what Y_{s^-}|^2 \vee  |\what Y_{s^-} + \what \psi_s(u)|^2 \right)^{p/2-1} \1_{ |\what Y_{s^-}| \vee  |\what Y_{s^-} + \what \psi_s(u)| \neq 0}\pi(du,ds) \\
&& \leq  p \int_{t\wedge \tau}^{m\wedge \tau} e^{p\rho s}\left( |\what Y_s|^{p-1} \check{\what Y}_s f(s,Y^m_s,Z^m_s,\psi^m_s) -\rho |\what Y_s|^p\right) ds  \\
&&\quad -  p \int_{t\wedge \tau}^{m\wedge \tau} e^{p\rho s} |\what Y_s|^{p-1} \check{\what Y}_s \what Z_s dW_s  -  p  \int_{t\wedge \tau}^{m\wedge \tau} e^{p\rho s}|\what Y_{s^-}|^{p-1} \check{\what Y}_{s^-} d\what M_s \\
& &\quad  -  p \int_{t\wedge \tau}^{m\wedge \tau}e^{p\rho s} |\what Y_s|^{p-1} \check{\what Y}_s  \int_\cU \what \psi_s(u) \tpi(du,ds) \\
&& \leq  p \int_{t\wedge \tau}^{m\wedge \tau} e^{p\rho s}\left( \al |\what Y_s|^{p} + K |\what Y_s|^{p-1} |\what Z_s| + K |\what Y_s|^{p-1} \|\what \psi_s\|_{L^2}  -\rho |\what Y_s|^p\right) ds  \\
&& \quad + p \int_{t\wedge \tau}^{m\wedge \tau} e^{p\rho s} |\what Y_s|^{p-1} \check{\what Y}_s f(s,\xi_s,\eta_s,\gamma_s) ds \\
&&\quad -  p \int_{t\wedge \tau}^{m\wedge \tau} e^{p\rho s} |\what Y_s|^{p-1} \check{\what Y}_s \what Z_s dW_s  -  p  \int_{t\wedge \tau}^{m\wedge \tau} e^{p\rho s}|\what Y_{s^-}|^{p-1} \check{\what Y}_{s^-} d\what M_s \\
& &\quad  -  p \int_{t\wedge \tau}^{m\wedge \tau}e^{p\rho s} |\what Y_s|^{p-1} \check{\what Y}_s  \int_\cU \what \psi_s(u) \tpi(du,ds) .
\end{eqnarray*}
By an argument already used to control the generator (see \eqref{eq:estim_12_10_2014_2}) and to obtain Inequality \eqref{eq:estim_12_10_2014}, we deduce that
\begin{eqnarray}  \nonumber
&&\E \left[ \sup_{t\in [n,m]} e^{p\rho (t\wedge \tau)} |\what Y_{t\wedge \tau}|^p + \int_{n\wedge \tau}^{m\wedge \tau} e^{p\rho s} |\what Y_s|^p ds +  \int_{n\wedge \tau}^{m\wedge \tau} e^{p\rho s}  |\what Y_{s}|^{p-2}  \1_{\what Y_s\neq 0} d[ \what M ]^c_s\right] \\  \nonumber
&& + \E \left[  \int_{n\wedge \tau}^{m\wedge \tau}  e^{p\rho s} |\what Y_{s}|^{p-2} |\what Z_s|^2 \1_{\what Y_s\neq 0} ds +  \int_{n\wedge \tau}^{m\wedge \tau} \int_\cU e^{p\rho s} |\what Y_{s}|^{p-2} |\what \psi_s(u)|^2 \1_{\what Y_s\neq 0}\mu(du) ds \right] \\  \nonumber
&& + \E \left[  \sum_{n\wedge \tau < s \leq m\wedge \tau}e^{p\rho s} |\Delta \what M_s|^2  \left( |\what Y_{s^-}|^2 \vee  |\what Y_{s^-} + \Delta \what M_s|^2 \right)^{p/2-1} \1_{\what Y_{s^-} \neq 0} \right] \\ \label{eq:estim_tau_convergence}
&&\qquad  \leq  C\E  \int_{n\wedge \tau}^{\tau} e^{p\rho s}|f(s,\xi_s,\eta_s,\gamma_s)|^p ds .
\end{eqnarray}
By assumption the last term goes to zero as $n$ goes to infinity. Next for $t \leq n$
\begin{eqnarray*}
\what Y_t & = & \what Y_n + \int_{n\wedge \tau}^{m\wedge \tau} (f(s,Y^m_s,Z^m_s,\psi^m_s)-f(s,Y^n_s,Z^n_s,\psi^n_s)) ds - \int_{n\wedge \tau}^{m\wedge \tau} \what Z_s dW_s \\
&& \quad -  \int_{n\wedge \tau}^{m\wedge \tau} \int_\cU \what \psi_s(u) \tpi(du,ds) - \what M_{m\wedge \tau} + \what M_{n\wedge \tau}.
\end{eqnarray*}
It follows from the same argument as in the proof of Proposition \ref{prop:uniq_random_time} that
\begin{eqnarray*}
\E e^{p\rho (t\wedge \tau)} |\what Y_{t\wedge \tau}|^p +\E  \int_0^{\tau} e^{p\rho s} |\what Y_s|^p  ds &  \leq & \E e^{p\rho (n\wedge \tau)}|\what Y_{n}|^p \\
& \leq &  C\E  \int_{n\wedge \tau}^{\tau} e^{p\rho s}|f(s,\xi_s,\eta_s,\gamma_s)|^p ds 
\end{eqnarray*}
and the convergence of the sequence $Y^n$. Moreover from the first step the limit satisfies the a priori estimate \eqref{eq:rnd_term_time_apriori_estim_2}.

\begin{itemize}
\item \noindent \textbf{Step 3:} \textit{convergence of the martingale part $(Z^n ,\psi^n,M^n)$.} 
\end{itemize}
The proof is rather different for $p\geq 2$ and $p< 2$. In the first case, we follow the proof of \cite{pard:99}, Theorem 4.1 and Proposition \ref{prop:Lp_estimates}. We apply It\^o's formula to $e^{2\rho s} |\what Y_s|^2$ for $n \leq t \leq m$:
\begin{eqnarray*} 
&&  \int_{t\wedge \tau}^{m\wedge \tau} e^{2\rho s} |\what Z_s|^2 ds +  \int_{t\wedge \tau}^{m\wedge \tau} \int_\cU e^{2\rho s} |\what \psi_s(u)|^2 \pi(du,ds) + \int_{t\wedge \tau}^{m\wedge \tau}e^{2\rho s} d\what  M_{s} \\ 
&& \quad =  2 \int_{t\wedge \tau}^{m\wedge \tau}e^{2\rho s} \what Y_s \left( f(s,Y^m_s,Z^m_s,\psi^m_s) -f(s,\xi_s,\eta_s,\gamma_s) -  \rho |\what Y_s|^2 \right) ds \\
&& \qquad + 2 \int_{t\wedge \tau}^{m\wedge \tau}e^{2\rho s} \what Y_s f(s,\xi_s,\eta_s,\gamma_s)ds\\
&& \qquad -2 \int_{t\wedge \tau}^{m\wedge \tau} e^{2\rho s}\what Y_{s} \what Z_s dW_s  -2\int_{t\wedge \tau}^{m\wedge \tau}e^{2\rho s} \what Y_{s^-} d\what M_s  - 2 \int_{t\wedge \tau}^{m\wedge \tau}e^{2\rho s} \int_\cU \what Y_{s^-} \what \psi_s(u)  \tpi(du,ds) .
\end{eqnarray*}
With the same arguments used to obtain \eqref{eq:Lp_estim_mart_part_p_geq _2}, the assumptions on $f$, Burkholder-Davis-Gundy ($p/2 \geq 1$) and Young's inequality lead to:
\begin{eqnarray*}  \nonumber
&& \E \left( \int_{n\wedge \tau}^{m\wedge \tau} e^{2\rho s} |\what Z_s|^2 ds \right)^{p/2}+ \E\left(  \int_{n\wedge \tau}^{m\wedge \tau} e^{2\rho s}\int_\cU |\what \psi_s(u)|^2 \mu(du) ds  \right)^{p/2}+ \E \left( \int_{n\wedge \tau}^{m\wedge \tau} e^{2\rho s} d\what  M_s\right)^{p/2}\\  
&& \quad \leq \widetilde C_{p,K,T,\eps} \ \E \left( \sup_{n\leq t} e^{p\rho s}  |\what Y_s|^p\right)  + \widetilde C_p \E \left[  \int_{n\wedge \tau}^{\tau} e^{p\rho s} | f(s,\xi_s,\eta_s,\gamma_s)|^p ds \right]   .
\end{eqnarray*}
Then following the same scheme but with $t \leq n$, we obtain:
\begin{eqnarray*}  \nonumber
&& \E \left( \int_{0}^{n\wedge \tau} e^{2\rho s} |\what Z_s|^2 ds \right)^{p/2}+ \E\left(  \int_{0}^{n\wedge \tau} e^{2\rho s}\int_\cU |\what \psi_s(u)|^2 \mu(du) ds  \right)^{p/2}+ \E \left( \int_{0}^{n\wedge \tau} e^{2\rho s} d\what  M_s\right)^{p/2}\\  
&& \quad \leq \widetilde C_{p,K,T,\eps} \ \E \left( e^{p\rho (n\wedge \tau)}  |\what Y_{n\wedge \tau}|^p\right) .
\end{eqnarray*}

Now assume that $p\in (1,2)$. From Inequality \eqref{eq:estim_tau_convergence} and by the proof of uniqueness we deduce that 
\begin{eqnarray*}  \nonumber
&&\E \left[  \int_{0}^{m\wedge \tau} e^{p\rho s}  |\what Y_{s}|^{p-2}  \1_{\what Y_s\neq 0} d[ \what M ]^c_s+  \int_{0}^{m\wedge \tau}  e^{p\rho s} |\what Y_{s}|^{p-2} |\what Z_s|^2 \1_{\what Y_s\neq 0} ds\right] \\  \nonumber
&& + \E \left[   \int_{0}^{m\wedge \tau} \int_\cU e^{p\rho s} |\what Y_{s}|^{p-2} |\what \psi_s(u)|^2 \1_{\what Y_s\neq 0}\mu(du) ds \right] \\  \nonumber
&& + \E \left[ \int_{0}^{ m \wedge \tau}\int_\cU e^{p\rho s}|\what \psi_s(u)|^2  \left( |\what Y_{s^-}|^2 \vee  |\what Y_{s^-} + \what \psi_s(u)|^2 \right)^{p/2-1} \1_{ |\what Y_{s^-}| \vee  |\what Y_{s^-} + \what \psi_s(u)| \neq 0}\pi(du,ds) \right] \\ 
&& + \E \left[  \sum_{0 < s \leq m\wedge \tau}e^{p\rho s} |\Delta \what M_s|^2  \left( |\what Y_{s^-}|^2 \vee  |\what Y_{s^-} + \Delta \what M_s|^2 \right)^{p/2-1} \1_{|\what Y_{s^-}| \vee  |\what Y_{s^-} + \Delta \what M_s| \neq 0} \right] \\ 
&&\qquad  \leq  C\E  \int_{n\wedge \tau}^{\tau} e^{p\rho s}|f(s,\xi_s,\eta_s,\gamma_s)|^p ds .
\end{eqnarray*}
Then we can use again the argument \eqref{eq:trick_control_mart_part} in order to have:
\begin{eqnarray*}
&& \E \left( \int_0^{m\wedge \tau} e^{2 \rho s} |\what Z_s|^2 ds \right)^{p/2} = \E \left( \int_0^{m\wedge \tau} e^{2 \rho s} \1_{\what Y_s \neq 0} |\what Z_s|^2 ds \right)^{p/2} \\
&& \quad \leq \frac{2-p}{2} \E \left[ \sup_{n\leq t}\left(  e^{\rho ps}|\what Y_{s}|^{p} \right)\right] + \frac{p}{2}\E \int_0^T e^{\rho ps} |\what Y_{s}|^{p-2} \1_{\what Y_s\neq 0} |\what Z_s|^2 ds\\
&& \quad \leq \frac{2-p}{2} \E \left[ \sup_{n\leq t} \left( e^{\rho ps}|\what Y_{s}|^{p} \right) \right]+ \frac{p}{2} C\E  \int_{n\wedge \tau}^{\tau} e^{p\rho s}|f(s,\xi_s,\eta_s,\gamma_s)|^p ds.
\end{eqnarray*}
We can repeat this for $\what \psi$ and $\what M$.

Therefore in both cases we proved that the sequence $(Z^n ,\psi^n,M^n)$ is a Cauchy sequence for the norm:
$$\E \left( \int_{0}^{\tau} e^{2\rho s} |\what Z_s|^2 ds \right)^{p/2}+ \E\left(  \int_{0}^{ \tau} e^{2\rho s}\int_\cU |\what \psi_s(u)|^2 \mu(du) ds  \right)^{p/2}+ \E \left( \int_{0}^{ \tau} e^{2\rho s} d[\what  M]_s\right)^{p/2}$$
Hence it converges to $(Z,\psi,M)$ and from the two previous steps the limit $(Y,Z,\psi,M)$ is a solution of the BSDE \eqref{eq:gene_BSDE_rand_time} which satisfies \eqref{eq:rnd_term_time_apriori_estim_2} and \eqref{eq:rnd_term_time_apriori_estim_3}.
\end{proof}

From the two previous propositions we deduce:
\begin{Thm} \label{thm:exis_Lp_sol_rnd_time}
Under conditions (H1), (H2''), (H3), (H5') and (H6), the BSDE \eqref{eq:gene_BSDE_rand_time} has a unique solution satisfying \eqref{eq:rnd_term_time_apriori_estim_2} and \eqref{eq:rnd_term_time_apriori_estim_3}.
\end{Thm}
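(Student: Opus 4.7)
The theorem is essentially the conjunction of the two preceding propositions in this section, so my plan is simply to assemble them. Proposition \ref{prop:exis_Lp_sol_rnd_time} already produces a quadruple $(Y,Z,\psi,M)$ that solves the random-horizon BSDE \eqref{eq:gene_BSDE_rand_time} and simultaneously satisfies both quantitative a priori bounds \eqref{eq:rnd_term_time_apriori_estim_2} and \eqref{eq:rnd_term_time_apriori_estim_3}. This settles the existence half of the statement, and crucially gives a candidate solution lying in the class in which we want to claim uniqueness.

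For uniqueness, I appeal to Proposition \ref{prop:uniq_random_time}. That proposition asserts uniqueness among solutions merely satisfying the qualitative finiteness condition \eqref{eq:rnd_term_time_apriori_estim}. The only thing to check is that the stronger quantitative condition \eqref{eq:rnd_term_time_apriori_estim_2} implies \eqref{eq:rnd_term_time_apriori_estim}: this is immediate because the left-hand sides are identical and the right-hand side of \eqref{eq:rnd_term_time_apriori_estim_2} is $C\,\E[e^{p\rho\tau}|\xi|^p+\int_0^\tau e^{p\rho s}|f(s,0,0,0)|^p ds]$, which is finite under assumption (H5'). Consequently any solution of \eqref{eq:gene_BSDE_rand_time} satisfying \eqref{eq:rnd_term_time_apriori_estim_2} falls within the uniqueness class of Proposition \ref{prop:uniq_random_time}, and must therefore agree with the solution supplied by Proposition \ref{prop:exis_Lp_sol_rnd_time}.

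There is no real obstacle here since the substantive work, namely the construction through the Picard-type sequence on $[0,n]$ pieced with the martingale representation tail, and the It\^o/Burkholder--Davis--Gundy estimates that yield \eqref{eq:rnd_term_time_apriori_estim_2}--\eqref{eq:rnd_term_time_apriori_estim_3}, has already been carried out in Propositions \ref{prop:uniq_random_time} and \ref{prop:exis_Lp_sol_rnd_time}. The proof of the theorem therefore reduces to a one-line citation of these two results together with the trivial implication \eqref{eq:rnd_term_time_apriori_estim_2}$\Rightarrow$\eqref{eq:rnd_term_time_apriori_estim} under (H5').
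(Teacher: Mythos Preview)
Your proposal is correct and matches the paper's own proof, which is simply the one-line ``From the two previous propositions we deduce'' citing Propositions \ref{prop:uniq_random_time} and \ref{prop:exis_Lp_sol_rnd_time}. Your explicit remark that \eqref{eq:rnd_term_time_apriori_estim_2} together with (H5') implies \eqref{eq:rnd_term_time_apriori_estim} is the only point worth spelling out, and you have done so.
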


\begin{Remark}
As in Pardoux \cite{pard:99} (Exercise 4.2), one can replace the condition $\rho > \nu =\al+ \frac{K^2}{(p-1)}$ by the condition $\rho > \al$ if there exists a progressively measurable process $g$ such that for any $z$ and $\psi$
$$|f(t,0,z,\psi)| \leq g_t,$$
and
$$ E\int_0^\tau e^{p\rho t} |g_t|^p dt<\infty.$$
In this case the conclusion of Theorem \ref{thm:exis_Lp_sol_rnd_time} also holds.
\end{Remark}
Indeed for $p\geq 2$ as in the proof of Proposition \ref{prop:Lp_estimates} we can obtain for every $0\leq t \leq T$ and every $\rho > \al$
\begin{eqnarray*}
&& e^{p\rho (t\wedge \tau)} |Y_{t\wedge \tau}|^p +  \kappa_p \int_{t\wedge \tau}^{T \wedge \tau} e^{p\rho s}| Y_{s}|^{p-2} |  Z_s|^2 ds + \kappa_p \int_{t\wedge \tau}^{T \wedge \tau} e^{p\rho s}| Y_{s^-}|^{p-2} d[ M ]_s \\
&& \qquad + \kappa_p\int_{t\wedge \tau}^{T \wedge \tau} e^{p\rho s} |Y_{s^-}  |^{p-2}  \| \psi_s\|_{L^2_\mu}^2 ds \\
&&\quad \leq e^{p\rho (T\wedge \tau)} |Y_{T\wedge \tau}|^p  + \int_{t\wedge \tau}^{T \wedge \tau} p e^{p\rho s} \left( Y_s | Y_s|^{p-2} f(s,Y_s,Z_s,\psi_s) - \rho |Y_s|^p \right) ds\\
&& \quad  -  p  \int_{t\wedge \tau}^{T \wedge \tau}e^{p\rho s}Y_{s^-} |Y_{s^-}|^{p-2} d M_s  -  p  \int_{t\wedge \tau}^{T \wedge \tau}  e^{p\rho s}Y_{s^-} |Y_{s^-}|^{p-2} Z_s dW_s   \\
& &\quad   - \int_{t\wedge \tau}^{T \wedge \tau}e^{p\rho s} \int_\cU \left( |Y_{s^-} + \psi_s(u) |^p - |Y_{s^-}|^p \right) \tpi(du,ds)
\end{eqnarray*}
where $\kappa_p$ just depends on $p$. Now for any $\eps > 0$
$$ y | y|^{p-2} f(s,y,z,\psi) - \rho |y|^p \leq (\al - \rho) |y|^p + |y|^{p-1} g_s \leq (\al +\eps -  \rho) |y|^p + \frac{1}{p} \left( \frac{p\eps}{p-1}\right)^{1-p} g_s^p.$$
Therefore for any $\rho > \al$ we choose $\eps$ such that $\rho > \al+\eps$ and taking the expectation we have
\begin{eqnarray*}
&&\E e^{p\rho (t\wedge \tau)} |Y_{t\wedge \tau}|^p + \E \kappa_p \int_{t\wedge \tau}^{T \wedge \tau} e^{p\rho s}| Y_{s}|^{p-2} |  Z_s|^2 ds + \kappa_p  \E \int_{t\wedge \tau}^{T \wedge \tau} e^{p\rho s}| Y_{s^-}|^{p-2} d[ M ]_s \\
&& \qquad + \E \kappa_p\int_{t\wedge \tau}^{T \wedge \tau} e^{p\rho s} |Y_{s^-}  |^{p-2}  \| \psi_s\|_{L^2_\mu}^2 ds \\
&&\quad \leq \E e^{p\rho (T\wedge \tau)} |Y_{T\wedge \tau}|^p  + \left( \frac{p\eps}{p-1}\right)^{1-p} \E  \int_{t\wedge \tau}^{T \wedge \tau}  e^{p\rho s}  |g_s|^p ds.
\end{eqnarray*}
The same argument can be used in the case $1<p<2$.

\begin{Remark}
In dimension one, if $\xi$ and $f(t,0,0,0)$ are non negative, the $L^p$-solution $Y$ is non negative and if $f(s,0,z,\psi) \leq 0$ for any $z$ and $\psi$, the conclusion of Theorem \ref{thm:exis_Lp_sol_rnd_time} holds.
\end{Remark}

\begin{Remark}
In dimension one, under the assumptions of Theorem \ref{thm:exis_Lp_sol_rnd_time} (or of the previous remarks), and with condition (H3'), then the comparison result (Proposition \ref{prop:comp_sol_gene_BSDE}) holds.
\end{Remark}
Indeed we can sketch the proof to obtain that for any $0\leq t \leq T$
$$\what Y_{t\wedge \tau} \geq \E \left[ \Gamma_{t\wedge \tau,T\wedge \tau} \what Y_{T\wedge \tau} + \int_{t\wedge \tau}^{T\wedge \tau} \Gamma_{t\wedge \tau,s} f_s ds \bigg| \F_{t\wedge \tau} \right]$$
with suitable integrability conditions. The conclusion follows by letting $T$ go to $+\infty$.

\bibliography{biblio_revised_version}

\def\cprime{$'$}
\begin{thebibliography}{10}

\bibitem{barl:buck:pard:97}
G.~Barles, R.~Buckdahn, and \'E. Pardoux.
\newblock Backward stochastic differential equations and integral-partial
  differential equations.
\newblock {\em Stochastics Stochastics Rep.}, 60(1-2):57--83, 1997.

\bibitem{barr:elka:05}
P.~Barrieu and N.~El~Karoui.
\newblock Inf-convolution of risk measures and optimal risk transfer.
\newblock {\em Finance Stoch.}, 9(2):269--298, 2005.

\bibitem{bech:06}
D.~Becherer.
\newblock Bounded solutions to backward {SDE}'s with jumps for utility
  optimization and indifference hedging.
\newblock {\em Ann. Appl. Probab.}, 16(4):2027--2054, 2006.

\bibitem{bria:carm:00}
P.~Briand and R.~Carmona.
\newblock B{SDE}s with polynomial growth generators.
\newblock {\em J. Appl. Math. Stochastic Anal.}, 13(3):207--238, 2000.

\bibitem{bria:dely:hu:03}
Ph. Briand, B.~Delyon, Y.~Hu, E.~Pardoux, and L.~Stoica.
\newblock {$L^p$} solutions of backward stochastic differential equations.
\newblock {\em Stochastic Process. Appl.}, 108(1):109--129, 2003.

\bibitem{carb:ferr:sant:07}
R.~Carbone, B.~Ferrario, and M.~Santacroce.
\newblock Backward stochastic differential equations driven by c\`adl\`ag
  martingales.
\newblock {\em Teor. Veroyatn. Primen.}, 52(2):375--385, 2007.

\bibitem{ceci:cret:russ:14}
C.~Ceci, A.~Cretarola, and F.~Russo.
\newblock B{SDE}s under partial information and financial applications.
\newblock {\em Stochastic Process. Appl.}, 124(8):2628--2653, 2014.

\bibitem{cohe:elli:12}
S.~N. Cohen and R.~J. Elliott.
\newblock Existence, uniqueness and comparisons for {BSDE}s in general spaces.
\newblock {\em Ann. Probab.}, 40(5):2264--2297, 2012.

\bibitem{delo:13}
{\L}.~Delong.
\newblock {\em Backward stochastic differential equations with jumps and their
  actuarial and financial applications}.
\newblock European Actuarial Academy (EAA) Series. Springer, London, 2013.
\newblock BSDEs with jumps.

\bibitem{dzha:valk:90}
K.~Dzhaparidze and E.~Valkeila.
\newblock On the {H}ellinger type distances for filtered experiments.
\newblock {\em Probab. Theory Related Fields}, 85(1):105--117, 1990.

\bibitem{elka:huan:97}
N.~El~Karoui and S.-J. Huang.
\newblock A general result of existence and uniqueness of backward stochastic
  differential equations.
\newblock In {\em Backward stochastic differential equations ({P}aris,
  1995--1996)}, volume 364 of {\em Pitman Res. Notes Math. Ser.}, pages 27--36.
  Longman, Harlow, 1997.

\bibitem{elka:kapo:pard:97}
N.~El~Karoui, C.~Kapoudjian, E.~Pardoux, S.~Peng, and M.~C. Quenez.
\newblock Reflected solutions of backward {SDE}'s, and related obstacle
  problems for {PDE}'s.
\newblock {\em Ann. Probab.}, 25(2):702--737, 1997.

\bibitem{elka:peng:quen:97}
N.~{El Karoui}, S.G. Peng, and M.C. Quenez.
\newblock Backward stochastic differential equations in finance.
\newblock {\em Math. Finance}, 7(1):1--71, 1997.

\bibitem{grae:hors:qiu:13}
P.~{Graewe}, U.~{Horst}, and J.~{Qiu}.
\newblock {A Non-Markovian Liquidation Problem and Backward SPDEs with Singular
  Terminal Conditions}.
\newblock {\em ArXiv e-prints}, 2013.

\bibitem{hama:lepe:95}
S.~Hamad{\`e}ne and J.~P. Lepeltier.
\newblock Backward equations, stochastic control and zero-sum stochastic
  differential games.
\newblock {\em Stochastics Stochastics Rep.}, 54(3-4):221--231, 1995.

\bibitem{hama:lepe:peng:97}
S.~Hamad\`ene, J.-P. Lepeltier, and S.~Peng.
\newblock B{SDE}s with continuous coefficients and stochastic differential
  games.
\newblock In {\em Backward stochastic differential equations ({P}aris,
  1995--1996)}, volume 364 of {\em Pitman Res. Notes Math. Ser.}, pages
  115--128. Longman, Harlow, 1997.

\bibitem{jaco:shir:03}
J.~Jacod and A.~N. Shiryaev.
\newblock {\em Limit theorems for stochastic processes}, volume 288 of {\em
  Grundlehren der Mathematischen Wissenschaften [Fundamental Principles of
  Mathematical Sciences]}.
\newblock Springer-Verlag, Berlin, second edition, 2003.

\bibitem{klim:13b}
T.~Klimsiak.
\newblock B{SDE}s with monotone generator and two irregular reflecting
  barriers.
\newblock {\em Bull. Sci. Math.}, 137(3):268--321, 2013.

\bibitem{klim:14}
T.~{Klimsiak}.
\newblock {Reflected BSDEs on Filtered Probability Spaces}.
\newblock {\em ArXiv e-prints}, August 2014.

\bibitem{klim:rozk:13}
T.~Klimsiak and A.~Rozkosz.
\newblock Dirichlet forms and semilinear elliptic equations with measure data.
\newblock {\em J. Funct. Anal.}, 265(6):890--925, 2013.

\bibitem{klim:13c}
Tomasz Klimsiak.
\newblock On time-dependent functionals of diffusions corresponding to
  divergence form operators.
\newblock {\em J. Theoret. Probab.}, 26(2):437--473, 2013.

\bibitem{krus:popi:15}
T.~{Kruse} and A.~{Popier}.
\newblock {Minimal supersolutions for BSDEs with singular terminal condition
  and application to optimal position targeting}.
\newblock {\em ArXiv e-prints}, April 2015.

\bibitem{leng:lepi:prat:80}
E.~Lenglart, D.~L{\'e}pingle, and M.~Pratelli.
\newblock Pr\'esentation unifi\'ee de certaines in\'egalit\'es de la th\'eorie
  des martingales.
\newblock In {\em Seminar on {P}robability, {XIV} ({P}aris, 1978/1979)
  ({F}rench)}, volume 784 of {\em Lecture Notes in Math.}, pages 26--52.
  Springer, Berlin, 1980.
\newblock With an appendix by Lenglart.

\bibitem{li:wei:14}
J.~Li and Q.~Wei.
\newblock {$L^p$} estimates for fully coupled {FBSDE}s with jumps.
\newblock {\em Stochastic Process. Appl.}, 124(4):1582--1611, 2014.

\bibitem{lian:lyon:qian:11}
G.~Liang, T.~Lyons, and Z.~Qian.
\newblock Backward stochastic dynamics on a filtered probability space.
\newblock {\em Ann. Probab.}, 39(4):1422--1448, 2011.

\bibitem{morl:10}
M.-A. Morlais.
\newblock A new existence result for quadratic {BSDE}s with jumps with
  application to the utility maximization problem.
\newblock {\em Stochastic Process. Appl.}, 120(10):1966--1995, 2010.

\bibitem{nual:scho:01}
D.~Nualart and W.~Schoutens.
\newblock Backward stochastic differential equations and {F}eynman-{K}ac
  formula for {L}\'evy processes, with applications in finance.
\newblock {\em Bernoulli}, 7(5):761--776, 2001.

\bibitem{okse:zhan:12}
B.~{\O}ksendal and T.~Zhang.
\newblock Backward stochastic differential equations with respect to general
  filtrations and applications to insider finance.
\newblock {\em Commun. Stoch. Anal.}, 6(4):703--722, 2012.

\bibitem{pard:97}
E.~Pardoux.
\newblock Generalized discontinuous backward stochastic differential equations.
\newblock In {\em Backward stochastic differential equations ({P}aris,
  1995--1996)}, volume 364 of {\em Pitman Res. Notes Math. Ser.}, pages
  207--219. Longman, Harlow, 1997.

\bibitem{pard:98}
{\'E}.~Pardoux.
\newblock Backward stochastic differential equations and viscosity solutions of
  systems of semilinear parabolic and elliptic {PDE}s of second order.
\newblock In {\em Stochastic analysis and related topics, {VI} ({G}eilo,
  1996)}, volume~42 of {\em Progr. Probab.}, pages 79--127. Birkh\"auser
  Boston, Boston, MA, 1998.

\bibitem{pard:99}
{\'E}~Pardoux.
\newblock B{SDE}s, weak convergence and homogenization of semilinear {PDE}s.
\newblock In {\em Nonlinear analysis, differential equations and control
  ({M}ontreal, {QC}, 1998)}, volume 528 of {\em NATO Sci. Ser. C Math. Phys.
  Sci.}, pages 503--549. Kluwer Acad. Publ., Dordrecht, 1999.

\bibitem{pard:peng:92}
{\'E}.~Pardoux and S.~Peng.
\newblock Backward stochastic differential equations and quasilinear parabolic
  partial differential equations.
\newblock In {\em Stochastic partial differential equations and their
  applications ({C}harlotte, {NC}, 1991)}, volume 176 of {\em Lecture Notes in
  Control and Inform. Sci.}, pages 200--217. Springer, Berlin, 1992.

\bibitem{pard:peng:90}
{\'E}.~Pardoux and S.~G. Peng.
\newblock Adapted solution of a backward stochastic differential equation.
\newblock {\em Systems Control Lett.}, 14(1):55--61, 1990.

\bibitem{prot:04}
Philip~E. Protter.
\newblock {\em Stochastic integration and differential equations}, volume~21 of
  {\em Applications of Mathematics (New York)}.
\newblock Springer-Verlag, Berlin, second edition, 2004.
\newblock Stochastic Modelling and Applied Probability.

\bibitem{quen:sule:13}
M.-C. Quenez and A.~Sulem.
\newblock B{SDE}s with jumps, optimization and applications to dynamic risk
  measures.
\newblock {\em Stochastic Process. Appl.}, 123(8):3328--3357, 2013.

\bibitem{roug:elka:00}
R.~Rouge and N.~El~Karoui.
\newblock Pricing via utility maximization and entropy.
\newblock {\em Math. Finance}, 10(2):259--276, 2000.
\newblock INFORMS Applied Probability Conference (Ulm, 1999).

\bibitem{royer:06}
M.~Royer.
\newblock Backward stochastic differential equations with jumps and related
  non-linear expectations.
\newblock {\em Stochastic Process. Appl.}, 116(10):1358--1376, 2006.

\bibitem{situ:05}
Rong Situ.
\newblock {\em Theory of stochastic differential equations with jumps and
  applications}.
\newblock Mathematical and Analytical Techniques with Applications to
  Engineering. Springer, New York, 2005.
\newblock Mathematical and analytical techniques with applications to
  engineering.

\bibitem{tang:li:94}
S.J. Tang and X.J. Li.
\newblock Necessary conditions for optimal control of stochastic systems with
  random jumps.
\newblock {\em SIAM J. Control Optim.}, 32(5):1447--1475, 1994.

\bibitem{xia:00}
J.~Xia.
\newblock Backward stochastic differential equation with random measures.
\newblock {\em Acta Math. Appl. Sinica (English Ser.)}, 16(3):225--234, 2000.

\bibitem{yao:10}
S.~{Yao}.
\newblock {Lp Solutions of Backward Stochastic Differential Equations with
  Jumps}.
\newblock {\em ArXiv e-prints}, July 2010.

\end{thebibliography}

\end{document}